\newtheorem{definition}{Definition}[section] 
\newtheorem{theorem}{Theorem}[section]
\newtheorem{corollary}[theorem]{Corollary}
\newtheorem{lemma}[theorem]{Lemma}
\newcommand{\ie}{\setlength{\parskip}{0cm} \setlength{\itemsep}{0cm}}
\DeclareMathAlphabet{\mathcal}{OMS}{cmsy}{m}{n}
\newcommand{\la}{\left(}
\newcommand{\ra}{\right)}
\newcommand{\lb}{\left\langle}
\newcommand{\rb}{\right\rangle}
\newcommand\tsout{\bgroup\markoverwith{\textcolor{red}{\rule[0.5ex]{2pt}{1.4pt}}}\ULon}
\newcommand{\stkout}[1]{\ifmmode\text{\tsout{\ensuremath{#1}}}\else\tsout{#1}\fi}
\def\b{\mathbb}
\def\E{\mathbb E}
\def\P{\mathbb P}
\def\R{\mathbb R}
\def\S{\mathbb S}
\def\T{\mathbb T}
\def\L{\mathbb L}
\def\H{\mathbb H}
\def\dt{\ {\rm d}t}
\def\ds{\ {\rm d}s}
\def\dr{\ {\rm d}r}
\def\dW{\ {\rm d}W}
\def\dx{\ {\rm d}x}
\def\dy{\ {\rm d}y}
\def\d{{\rm d}}
\def\m{M}
\def\1{\mathbbm{1}}
\def\tr{\textcolor{red}}
\def\bal{\begin{equation*}\begin{aligned}} 
		\def\eal{\end{aligned}\end{equation*}}
\def\be{\begin{equation}\label} 
	\def\ee{\end{equation}}
\def\bd{\begin{definition}\label}
	\def\ed{\end{definition}}
\def\bt{\begin{theorem}\label}
	\def\et{\end{theorem}}
\def\bl{\begin{lemma}\label}
	\def\el{\end{lemma}}
\numberwithin{equation}{section}
\title{Stochastic Landau-Lifshitz-Gilbert equations for frustrated magnets under fluctuating currents}
\date{\today}
\begin{document}

\author{Beniamin Goldys}
\address{School of Mathematics and Statistics, The University of Sydney, Sydney 2006, Australia}
\email{beniamin.goldys@sydney.edu.au}
\author{Chunxi Jiao}
\address{Lehrstuhl für Angewandte Analysis, RWTH Aachen University, Aachen 52062, Germany}
\email{jiao@math1.rwth-aachen.de}
\author{Christof Melcher}
\address{Lehrstuhl für Angewandte Analysis, RWTH Aachen University, Aachen 52062, Germany}
\email{melcher@math1.rwth-aachen.de}
\thanks{This  work was supported by the Australian Research Council Project DP200101866 and the Deutsche Forschungsgemeinschaft (DFG, German Research Foundation) Project 442047500 - SFB 1481. CM is grateful for the support and hospitality of the Sydney Mathematical Research Institute (SMRI). 
} 
\keywords{Landau--Lifshitz equations, stochastic partial differential equations, gradient noise}
\subjclass{35D30, 35K45, 35K55, 35Q56, 35Q60, 60H15}
%MSC2020 database
%35D30: weak solutions to PDEs, 
%35K45: Initial value problems for second-order parabolic systems, 
%35K55: Nonlinear parabolic equations,
%35Q56: Ginzburg-Landau equations , 
%35Q60: PDEs in connection with optics and electromagnetic theory, 
%60H15: Stochastic partial differential equations (aspects of stochastic analysis)

\maketitle

\begin{abstract}
	We examine a stochastic Landau-Lifshitz-Gilbert equation for a frustrated ferromagnet with competing first and second order exchange interactions exposed to deterministic and random spin transfer torques in form of transport noise. We prove existence and pathwise uniqueness of weak martingale solutions in the energy space. The result ensures the persistence of topological patterns, occurring in such magnetic systems, under the influence of a fluctuating spin current.
\end{abstract}

\tableofcontents

\section{Introduction}
Magnetic systems with higher order exchange interactions are known to host topological pattern in form of skyrmions and hopfions in dimension $d=2,3$, respectively, occurring as isolated solitons or condensed in a regular lattice \cite{Sutcliffe}. This can be seen as an emergent phenomenon arising from competing exchange interactions beyond nearest neighbours on the atomic scale, including the case of geometric frustration with alternating (anti-)ferromagnetic coupling \cite{Abanov}. The controlled manipulation and transport of such structures by means of external currents is at the core of possible applications in future information technologies and a key challenge for the mathematical theory \cite{Sampaio_etal}. A decisive aspect on the level of such nanostructure is stability with respect to fluctuations coming from external sources. 
The mathematical description of random effects in magnetism is generally based on stochastic Landau-Lifshitz-Gilbert equations (SLLG) for a governing micromagnetic energy and a noise. The existence of weak martingale solutions of the SLLG with heat-bath noise in dimension $ d \leq 3 $ and pathwise uniqueness in dimension $ d=1 $ have been studied in \cite{BrzezniakGoldysJegaraj_2013, BrzezniakGoldysJegaraj_2017, GoldysLeTran}.
	
The conventional mathematical theory of Landau-Lifshitz-Gilbert equations (LLG) is based on the Dirichlet energy arising from classical Heisenberg interaction of neighbouring spins. A crucial mathematical feature is the possibility of finite time singularities in spatial dimensions greater than one. Global regularity is only expected for small initial date, so that weak solution concepts become unavoidable. In the energy critical dimension $d=2$, the blow-up scenario of dissipative harmonic flows including LLG is well-understood \cite{GuoHong,Harpes,Struwe1985}. The bubbling analysis singles out a suitable notion of energy decreasing weak solutions, so-called Struwe solutions, that are unique in this class. Corresponding existence and uniqueness results in the stochastic cases are just starting to emerge \cite{Hocquet_sHMF}. The occurrence of topological singularities, however, goes hand in hand with the collapse of topological patterns. Bubbling analysis extend to models for chiral skyrmions \cite{DoringMelcher,KomineasMelcherVenakides_small}, which are lower order perturbations of the classical theory, while the prediction of global regularity versus finite time blow-up remains a major open problem.
	
Higher order bi-harmonic exchange interaction exclude topological singularities in spatial dimensions $d\le 3$ by means of an infinite energy barrier. Global existence of topological patterns coupled to a Vlasov-Maxwell equation for the electron distributions function of an interacting current has been confirmed in \cite{DoresicMelcher}. In this work, we develop a stochastic framework for a LLG system with higher order exchange interactions and fluctuating currents and provide conditions under which topological patterns will persists almost surely. The models under consideration are based on
interaction energies of the form
\[
E(M)= \frac{1}{2} \int_{\R^d} \la |\Delta \m|^2 + \lambda | \nabla \m|^2 + h |\m -e_3|^2 \ra \dx
\]
for magnetization fields $M:\R^d \to \S^2 \subset \R^3$ where $d=2,3$ with $\lambda \in \R$, $h \ge 0$ and $ e_3:=(0,0,1) $. 
By scaling we can assume $\lambda = \pm 1$. The case we are focusing on is $\lambda=-1$ corresponding to a frustrated magnet. 
In this case the usual stability condition for the ferromagnetic state $m = e_3$ is $h>1/4$ which 
implies coercivity $E(\m)\gtrsim \int |\nabla^2 \m|^2 + |\m - e_3|^2$. The unperturbed Landau-Lifshitz-Gilbert equation reads
\[
\partial_t \m = - \m \times \boldsymbol{H}(\m) -
\alpha \m \times \m \times \boldsymbol{H}(\m) 
\]
where $\alpha>0$ is the damping constant and 
\[
\boldsymbol{H}(\m)= - \left[ \Delta^2 \m + \Delta \m - h e_3 \right] 
\]
the effective field, i.e., minus the $L^2$ energy gradient. The dissipative nature implies a uniform bound of $\m-e_3$ in $H^2$ in terms of initial conditions. 
Here we are interested in the LLG dynamics driven by a fluctuating current. The mathematical description of such non-variational effects requires careful considerations about the specific formulation of dissipation in the style of Landau-Lifshitz or Gilbert, respectively. The so-called adiabatic spin transfer torque induced by a spin velocity field $v$ is introduced by adding the convection term $(v \cdot \nabla) \m$ to the micromagnetic torque $\m \times \boldsymbol{H}(\m)$ in LLG (note that this gives rise to two distinct terms). The phenomenological non-adiabatic spin transfer torque is induced by subtracting a perpendicular counterpart $\beta \m \times (v \cdot \nabla) \m$. The 
full torque becomes
\[
\boldsymbol{\tau}(\m) = \m \times \left[ \boldsymbol{H}(\m) - \beta (v \cdot \nabla) \m \right] + (v \cdot \nabla) \m
\]
so that 
\[
\partial_t \m = - \boldsymbol{\tau}(\m) - \alpha \m \times \boldsymbol{\tau}(\m).
\]
	
In the special case $\alpha=\beta$, referred to as the Galilean invariant case, the interaction reduces, in the above formulation of LLG, to a single term $(1+\alpha^2) (v \cdot \nabla)\m$. It is known that in general the non-Galilean model is required for an adequate description of current driven magnetic microstructures, e.g. domain walls and vortices \cite{KurzkeMelcherMoser2006, KurzkeMelcherMoser2011}.
We decompose the fluctuating spin current $v_{\rm fluc} = v + \xi $ into a deterministic temporally homogeneous part $v$ and a space-time noise $\xi = \dot{W}$ that is formally the generalised time derivative (in the framework of Stratonovich calculus) of a Wiener process $W$ with values in a vectorial Sobolev space to be specified below. We shall treat the deterministic part of the spin transfer torque in its general form including adiabatic and non-adiabatic terms without any balancing conditions. 
For the random part we focus on the Galilean invariant case, i.e., we drop stochastic terms of the form $\m \times (\xi \cdot \nabla) \m$ and leave only a linear transport term $ (\xi \cdot \nabla) \m $, which has been studied in models of stochastic Euler and Navier-Stokes equations \cite{BrzezniakFlandoliMaurelli_2016,MikuleviciusRozovskii}.
Therefore, the stochastic equation that we study in this paper is in the form 
\begin{equation}\label{eq: sLLG in intro}
\begin{aligned}
	\partial_t \m 
	%= - \boldsymbol{\tau}(\m) - \alpha \m \times \boldsymbol{\tau}(\m) - (\xi \cdot \nabla) M,  
	&= -\m \times \boldsymbol{H}(\m) - \alpha \m \times \la \m \times \boldsymbol{H}(\m) \ra \\
	&\quad - (1+\alpha \beta) (v \cdot \nabla) \m - (\alpha - \beta) \m \times (v \cdot \nabla) \m - (\xi \cdot \nabla) M,  
\end{aligned}
\end{equation} 
where the noise is understood in the Stratonovich sense to ensure $ \m $ takes values in the unit sphere (norm constraint). 
This equation is written formally later in \eqref{eq: sLLG with Delta^2} along with a precise definition of $ W $.
	
For \eqref{eq: sLLG in intro}, the corresponding Stratonovich correction is in the form $ \frac{1}{2}(\xi \cdot \nabla)^2 \m $, which has a lower order than the leading bi-Laplacian term that arises from the dissipation 
\[
\alpha \m \times \la \m \times \Delta^2 \m \ra
= - \alpha \Delta^2 \m + \alpha \lb \m, \Delta^2 \m \rb \m 
\] 
under the norm constraint. 
Intuitively, in view of semigroup theory for mild solutions, this suggests that the noise does not need to be small or of a scale comparable to $ \alpha $ to guarantee existence of a solution. 
This is indeed the case, since in energy estimates, It{\^o}'s formula leads to cancellations between Stratonovich and It{\^o} corrections which reduce the order of norm required to estimate these noise-related terms. They can be bounded by the energy norm of $ M $ with boundedness but not smallness conditions of the noise. 
	
Small noises are of an independent interest in literature. 
Recent studies \cite{FlandoliGaleatiLuo,Galeati} showed that certain nonlinear PDEs perturbed by spatially divergence free linear transport noises (satisfying suitable $ L^\infty $ and $ L^2 $ conditions), converge weakly to parabolic deterministic equations. In these works, the noise (martingale) part vanishes but Stratonovich correction (in the form $ \Delta \m $) stays and acts as an additional dissipation in the limit under suitable scaling, which can delay blow-ups. It is not known whether a similar result holds for the conventional LLG model (without bi-harmonic interaction) or for our LLG model in higher dimensions. 
	
In addition, to the best of our knowledge, nonlinear transport noise of the form $ \m \times (\xi \cdot \nabla) \m $ (which we dropped) has not been widely explored. For our model, it is inconclusive whether cancellations of highest-order terms can be achieved for Stratonovich and It{\^o} corrections related to this kind of nonlinear noise. Those highest-order terms consist of cross products of mixed derivatives, which seem to require $ H^3 $-estimates in dimension $ d \geq 2 $, i.e., the energy norm is not sufficient. It is therefore another open question regarding the existence of solution when the random part is also in the general non-Galilean form. 
	
In this paper, we show that under certain (spatial) regularity of the noise, there exists a pathwise unique solution of a stochastic LLG equation in the form \eqref{eq: sLLG in intro} for frustrated magnets, with $ H^2 $-moment estimates. 
We formulate the formal stochastic LLG equation \eqref{eq: sLLG with Delta^2} in Section \ref{Section: Notation and the equation} and provide the main results (Theorem \ref{Theorem: E!} and Corollary \ref{Coro: homotopy preserved}) in Section \ref{Section: main results}. 
The rest of the paper is devoted to the proof of Theorem \ref{Theorem: E!}. 
We first construct an approximating equation in Section \ref{Section: Mollification with cut-off} using standard mollifiers and a suitable cut-off function. The latter allows us to estimate certain nonlinear terms in the absence of norm constraint, i.e., when the approximation does not necessarily take values in the unit sphere. 
This becomes clear as we derive $ H^2 $-uniform estimates of the approximation in Section \ref{Section: Uniform estimates of m-eps}, where the norm constraint is narrowly violated due to mollifications. 
Applying Skorohod theorem and compactness embedding results, we deduce strong (resp., weak) convergence in a weighted $ H^1 $ (resp., $ H^2 $) space in Section \ref{Section: convergence for fixed R}. 
Subsequently, we verify that the limit takes values in the unit sphere (Section \ref{Section: vector length of m-tilde}) and prove convergences of the drift and the diffusion terms of the equation separately (Section \ref{Section: convergence of drift and diffusion coefficients}). 
In Section \ref{Section: Proof of Theorem}, we show that the limit is indeed a pathwise unique solution of our stochastic LLG, concluding the proof. 
For the ease of reading, we collect in Appendix \ref{Section: preliminary estimates} some preliminary estimates used for Section \ref{Section: Uniform estimates of m-eps}.
Similar arguments hold for the equation on the torus, for which we give a brief description in Appendix \ref{Section: torus}.

\section{Problem formulation and results}\label{Section: Problem formulation and results}

\subsection{Notation and the equation}\label{Section: Notation and the equation}	
	We denote by $ \S^2 \subset \R^3 $ the unit sphere. 
	Let $ d=2,3 $. 
	Let $ \mathcal{S}(\R^d) $ and $ \mathcal{D}(\R^d) $ denote the Schwartz space of functions and the space of tempered distributions on $ \R^d $, respectively. 
	As usual, we denote by $ \L^p $ and $ {\b W}^{\sigma,p} $ the Lebesgue and Sobolev spaces $ L^p(\R^d; \R^3) $ and $ W^{\sigma,p}(\R^d; \R^3) $, with $ \H^\sigma := {\b W}^{\sigma,2} $. 
	%For any compact subset $ K $ of $ \R^d $, we denote by $ \L^p_K $ and $ \H^\sigma_K $ the spaces $ L^p(K;\R^3) $ and $ W^{\sigma,2}(K;\R^3) $. 
	For the weight function $ \rho: \R^d \ni x \mapsto (1+|x|^2)^{-2} \in (0,1] $, 
	\begin{equation}\label{eq: rho^-1 D-D2}
		|\rho^{-1} \partial_i \rho|_{\L^\infty} + |\rho^{-1} \partial_{ij} \rho |_{\L^\infty} \leq c, \quad \forall i,j=1,\ldots,d,
	\end{equation}
	for some constant $ c $.
	Then we define weighted spaces:
	\begin{align*}
		\L^p_\rho &:= \left\{ f: \R^d \to \R^3 \ : \ \left|f\rho^\frac{1}{p} \right|_{\L^p} < \infty \right\}, \quad p \in [1,\infty), \\
		\H^1_\rho &:= \left\{ f \in \L^2_\rho \ : \ \nabla f \in \L^2_\rho \right\}.
	\end{align*}   
	For normed spaces $ E_1 $ and $ E_2 $, we write $ E_1 \hookrightarrow E_2 $ if $ E_1 $ is continuously embedded in $ E_2 $, and $ E_1 \Subset E_2 $ if $ E_1 $ is compactly embedded in $ E_2 $. 
    We will use the notation $\nabla_g$ for the operator $\nabla_g\phi=g\cdot\nabla\phi$.
	
	%Let $ e_3 := (0,0,1) $. 
	Let $ h\geq 0 $ % be the magnitude of a constant Zeeman field. 
	and let $ v \in W^{2,\infty}(\R^d;\R^d) $ be a divergence-free spin velocity field, i.e., $ {\rm div} (v) = 0 $.
	Let $ W $ be an $ H^4(\R^d;\R^d) $-valued Wiener process with finite trace-class covariance $ Q $, in the form
	\begin{align*}
		W(t) = \sum_{k=1}^\infty q_k W_k(t)f_k,
	\end{align*}
	where $ \{W_k\} $ is a family of real-valued independent Brownian motions, $ \{ f_k \} $ is a complete orthonormal system in $ H^4(\R^d;\R^d) $ consisting of eigenvectors of $ Q $ such that $ Q f_k = q_k^2 f_k $ for some bounded real $ q_k $ and \smash{$ {\rm Tr} (Q) = \sum_{k=1}^\infty q_k^2 < \infty $}. 
	For every $ k \geq 1 $, let $ g_k := q_k f_k $. We assume that 
	%\begin{equation}\label{eq: g in W2inf}
	%	\stkout{\sum_{k=1}^\infty |g_k|_{H^4(\R^d;\R^d)}^2\leq c\,.} 
	%\end{equation}
	%\stkout{for some constant $ c $.}
    \begin{equation}\label{eq: g in W2inf}
        q^2:=\sum_{k=1}^\infty |g_k|_{H^4(\R^d;\R^d)}^2=\sum_{k=1}^\infty q_k^2<\infty.
    \end{equation}
	Recall \eqref{eq: sLLG in intro}, for simplicity we write $ v $ instead of $ (1+\alpha \beta) v $ and let \smash{$ \gamma := \frac{\alpha - \beta}{1+\alpha \beta} $}. 
	Now we write down the formal stochastic LLG equation. 
	Let $ T \in (0,\infty) $. We will prove the existence of a pathwise unique solution $ \m: [0,T] \times \R^d \to \S^2 $ (see Theorem \ref{Theorem: E!}) to the following equation:
	\begin{equation}\label{eq: sLLG with Delta^2}
		\begin{aligned}
			\d \m(t) 
			&= \m \times (\Delta \m + \Delta^2 \m - he_3) \dt + \alpha \m \times (\m \times (\Delta \m + \Delta^2 \m - he_3)) \dt \\
			&\quad - \la \nabla_v \m + \gamma \m \times \nabla_v m \ra \dt \\
			&\quad + \frac{1}{2} \sum_{k=1}^\infty (\nabla_{g_k})^2 \m \dt - \sum_{k=1}^\infty \nabla_{g_k} \m \dW_k(t), 
		\end{aligned}
	\end{equation}
	where $ \m(0) = \m_0: \R^d \to \S^2 $ and $ \nabla \m_0 \in \H^1 $. 
	For simplicity, 
	we define drift and diffusion coefficients
	\begin{align*}
		\bar{F}(u) &:= u \times (\Delta u + \Delta^2 u - he_3) + \alpha u \times (u \times (\Delta u + \Delta^2 u - he_3)) - \gamma u \times \nabla_v u, \\
		F(u) &:= \bar{F}(u) -\nabla_v u, \\
		S_k(u) &:= (\nabla_{g_k})^2 u, \\
		G_k(u) &:= -\nabla_{g_k} u, 
	\end{align*}
	for any $ u \in \H^2 $, where the bi-Laplacian terms are identified via their weak forms. 
	More explicitly, 
	\begin{equation}\label{eq: cross Delta^2 weak form}
		\begin{aligned}
			\lb w \times \Delta^2 u, \varphi \rb_{\L^2} 
			&= \lb \Delta u, \Delta \varphi \times w + \varphi \times \Delta w + 2 \nabla \varphi \times \nabla w \rb_{\L^2}, \\
			\lb w \times (w \times \Delta^2 u), \varphi \rb_{\L^2}
			&= \lb w \times \Delta u, \varphi \times \Delta w + \Delta \varphi \times w + 2 \nabla \varphi \times \nabla w \rb_{\L^2} 
			+ \lb \Delta u, (\varphi \times w) \times \Delta w \rb_{\L^2} \\
			&\quad + 2 \lb \nabla w \times \Delta u, \nabla \varphi \times w + \varphi \times \nabla w \rb_{\L^2}. \\
		\end{aligned}
	\end{equation}	
	for $ u,w,\varphi \in \H^2 $.

\subsection{Main results}\label{Section: main results}	
	In this section, we state the main theorem for the existence and uniqueness of solution, and prove the preservation of homotopy type as a corollary.  
	\begin{definition}\label{Def: martingale solution}
		Given $ T \in (0,\infty) $, we say that \eqref{eq: sLLG with Delta^2} has a martingale solution if there exists a filtered probability space $ (\Omega, \mathcal{F},(\mathcal{F}_t)_{t \in [0,T]}, \P) $ defined on which an $ H^4(\R^d;\R^d) $-valued Wiener process and a $ (\mathcal{F}_t) $-progressively measurable process $ \m $, such that
		\begin{enumerate}[label=(\roman*), wide, labelwidth=!, labelindent=0pt]
			\ie
			\item 
			$ |\m(t,x)|=1 $, a.e.-$ (t,x) $, $ \P $-a.s. and $ \m \in \mathcal{C}([0,T]; \L^2_\rho) $ satisfies 
			\begin{align*}
				\E \left[ |\nabla \m(t)|_{L^\infty(0,T;\H^1)}^2 + |\m \times \Delta^2 \m|_{L^2(0,T;\L^2)}^2 \right] < \infty,
			\end{align*} 
			
			\item 
			for every $ t \in [0,T] $, the following equality holds $ \P $-a.s. in $ \L^2_\rho $
			\begin{equation}\label{eq: sLLG with Delta^2 int}
				%	\begin{aligned}
					%		\m(t) 
					%		&= \m_0 + \int_0^t \la \m \times (\Delta \m + \Delta^2 \m - he_3) + \alpha \m \times (\m \times (\Delta \m + \Delta^2 \m - he_3)) -\nabla_v \m \ra(s) \ds \\
					%		&\quad - \sum_{k=1}^\infty \int_0^t \nabla_{g_k} \m(s) \circ \dW_k(s),
					%	\end{aligned}
				\begin{aligned}
					\m(t)
					&= \m_0 + \int_0^t F(\m(s)) \ds + \sum_{k=1}^\infty \int_0^t G_k(\m(s)) \circ \dW_k(s), 
				\end{aligned}
			\end{equation}	
			where the first Bochner integral and the Stratonovich integral are well-defined in $ \L^2 $.
		\end{enumerate} 
	\end{definition}

	\begin{theorem}\label{Theorem: E!}
		There exists a pathwise unique martingale solution $ (\Omega, \mathcal{F}, (\mathcal{F}_t)_{t \in [0,T]}, \P, W, \m) $ to \eqref{eq: sLLG with Delta^2} in the sense of Definition \ref{Def: martingale solution}, such that for $ p \in [1,\infty) $, 
		\begin{align*}
			\E \left[ |\nabla \m|_{L^\infty(0,T;\H^1)}^{2p} + |\m \times \Delta^2 \m|_{L^2(0,T;\L^2)}^{2p} \right] < \infty, 
		\end{align*}
		and $ \m-e_3 \in \mathcal{C}^\sigma([0,T];\L^2) $ $ \P $-a.s. for $ \sigma \in (0,\frac{1}{2}) $. 
	\end{theorem}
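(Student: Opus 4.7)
The plan is to combine a mollification-plus-cutoff regularization with a Skorohod-type compactness argument, following the scheme outlined in the introduction. First I would construct an approximating equation by replacing, in the nonlinear terms of \eqref{eq: sLLG with Delta^2}, each copy of $\m$ by its mollification $\m^\varepsilon := \eta_\varepsilon \ast \m$, and by multiplying the genuinely nonlinear contributions with a smooth cutoff $\chi_R$ depending on $|\nabla \m|_{\H^1}$. The cutoff compensates for the fact that the norm constraint $|\m|=1$ is only approximately preserved after mollification, and it is what allows the quadratic and cubic terms to be controlled without using the sphere identity. The resulting SPDE has locally Lipschitz coefficients in a suitable Hilbert space and is solvable by a standard fixed-point argument, producing $\m^{\varepsilon,R}$.

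Next I would derive $\varepsilon$- and $R$-uniform moment bounds by applying It\^o's formula to $|\nabla \m^{\varepsilon,R}|_{\H^1}^{2p}$. The leading term $-\alpha \m \times (\m \times \Delta^2 \m)$ provides, via the approximate sphere constraint, a coercive contribution of order $\alpha |\m \times \Delta^2 \m|_{\L^2}^2$. The key input highlighted in the introduction is that the Stratonovich correction $\tfrac{1}{2} (\nabla_{g_k})^2 \m$ pairs with the It\^o correction from the noise martingale so that the top-order second derivatives cancel, leaving only terms controlled by $q^2 |\nabla \m|_{\H^1}^2$ with $q^2$ from \eqref{eq: g in W2inf}. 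A stopping-time argument then removes $\chi_R$ and yields
\begin{equation*}
\sup_{\varepsilon} \E \left[ |\nabla \m^{\varepsilon}|_{L^\infty(0,T;\H^1)}^{2p} + |\m^{\varepsilon} \times \Delta^2 \m^{\varepsilon}|_{L^2(0,T;\L^2)}^{2p} \right] < \infty.
\end{equation*}

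To pass to $\varepsilon \to 0$, I would combine these bounds with a fractional time-H\"older control of $\m^\varepsilon - e_3$ in $\L^2$ obtained from BDG applied to the equation itself, yielding tightness of the laws in $\mathcal{C}([0,T]; \L^2_\rho) \cap L^2(0,T; \H^1_\rho)$ via a weighted Aubin--Lions lemma together with the compact embedding $\H^2 \Subset \H^1_\rho$ on $\R^d$. Skorohod's representation theorem then delivers a common probability space on which $\m^{\varepsilon_n} \to \tilde\m$ almost surely in these spaces and weakly-$*$ in $L^2(0,T;\H^2) \cap L^\infty(0,T;\H^1)$. I would verify $|\tilde\m|=1$ a.e.\ by passing to the limit in $|\m^\varepsilon|^2$ using the mollifier's regularization properties, and prove convergence of each drift and diffusion coefficient tested against $\vp \in \mathcal{D}(\R^d)$. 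The hardest step is the bi-Laplacian cross terms, which must be handled through the weak forms \eqref{eq: cross Delta^2 weak form}: strong $\H^1_\rho$ convergence of $\m^\varepsilon$ takes care of the lower-order products, while weak $\H^2$ convergence covers the $\Delta \m$ factor.

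Pathwise uniqueness would follow by applying It\^o's formula to $|\m^1 - \m^2|_{\L^2_\rho}^2$ for two solutions on the same basis. Skew-symmetry of the cross product against $\Delta^2$, the identity $|\m^i|=1$ used to rewrite the dissipation, and the same Stratonovich--It\^o cancellation produce a linear Gr\"onwall inequality whose coefficient is controlled by the pathwise finite energies, giving uniqueness and, via Yamada--Watanabe, a strong solution on the original probability space. The H\"older regularity $\m - e_3 \in \mathcal{C}^\sigma([0,T];\L^2)$ for $\sigma<1/2$ follows from BDG on the Stratonovich integral together with a Bochner $L^2$-bound on the drift derived from the $L^2(0,T;\L^2)$ control of $\m \times \Delta^2 \m$. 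The main obstacle I anticipate is the combined bookkeeping of the bi-Laplacian cross terms in the limiting procedure and of the Stratonovich--It\^o cancellations in both the energy and the uniqueness estimates, since both rely on a sphere constraint that is only approximately preserved along the approximation.
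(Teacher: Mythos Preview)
Your overall architecture---mollification plus cut-off, uniform energy estimates exploiting the Stratonovich--It\^o cancellation, Skorohod compactness in weighted spaces, identification of the limit, and an $\L^2$-type Gr\"onwall argument for uniqueness---is exactly the scheme the paper follows. The H\"older regularity argument at the end is also the same.

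There is, however, a genuine gap in how you handle the cut-off. You choose $\chi_R$ as a function of the \emph{norm} $|\nabla \m|_{\H^1}$, claim $\varepsilon$- and $R$-uniform bounds, and then remove $\chi_R$ by a stopping-time argument. Two problems arise. First, the a priori estimate does not close $R$-uniformly: the cross-terms from $\alpha\, \m \times (\m \times \Delta^2 \m)$ paired with $\Delta^2 m_\varepsilon$ produce factors of $|\m^\varepsilon|_{\L^\infty}$ (or its square), which your norm cut-off does not control pointwise; in the paper the resulting Gr\"onwall coefficient is of order $R$ and the final bound is $c\,e^{cR^p}$, not uniform in $R$. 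Second, removing the cut-off by stopping time is then circular: showing $\tau_R=T$ requires precisely the $R$-independent energy bound you are trying to establish.

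The paper avoids this entirely by a different mechanism. It takes a \emph{pointwise} cut-off $\psi_R(|J_\varepsilon m_\varepsilon+e_3|^2)$, so that $\psi_R^{1/2}|J_\varepsilon m_\varepsilon+e_3|\le (R+1)^{1/2}$ everywhere; this is exactly what the Cauchy--Schwarz splitting of the bi-Laplacian cross-terms against the coercive quantity $|\psi_R^{1/2}(u_\varepsilon+e_3)\times\Delta^2 u_\varepsilon|_{\L^2}$ needs. The bounds are then $R$-dependent, and $R>1$ is simply \emph{fixed} throughout. After passing $\varepsilon\to 0$ via Skorohod, a separate It\^o computation (for $\tfrac14|1-|m_\varepsilon+e_3|^2|_{\L^2_\rho}^2$, tracking explicit mollifier remainders) shows that the limit satisfies $|\tilde m+e_3|=1$ a.e.; since $R>1$ this forces $\psi_R\equiv 1$ and $\nabla\psi_R=\Delta\psi_R=0$ in the limit, so the cut-off disappears \emph{a posteriori} without any stopping time. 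You should replace your norm cut-off and stopping-time step by this pointwise cut-off and the sphere-constraint argument.

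One minor point: for pathwise uniqueness the paper works with the unweighted difference $|M_1-M_2|_{\L^2}^2$ (this is legitimate because $M_1-M_2=m_1-m_2\in\L^2$), expanding $M_i\times(M_i\times\Delta^2 M_i)=\langle M_i,\Delta^2 M_i\rangle M_i-\Delta^2 M_i$ to isolate a clean $-\alpha|\Delta y|_{\L^2}^2$ dissipation; there is no need for the weight $\rho$ in that step.
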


The bounds in Theorem \ref{Theorem: E!} and a simple interpolation argument implies space-time H\"older continuity almost surely.
Hence, $\m(t)$ defines a homotopy of maps from $\mathbb{R}^d$. 
In fact, for $ \frac{d}{2}<\tau <2 $,
\begin{align*}
			\left| \m(s) -\m(t) \right|_{\H^\tau}^2
			\leq c \left|\m(s) -\m(t) \right|_{\L^2}^{2-\tau}
			\left|\m(s) -\m(t) \right|_{\H^2}^{\tau} \lesssim |s-t|^{\sigma(2-\tau)}
\end{align*}
and by Sobolev embedding
\[
|\m(s, x) - \m(t,y)| \le c \left( |\m(s)|_{\H^\tau} |x-y|^{\tau -\frac{d}{2}} + |\m(s)-\m(t)|_{\H^\tau} \right)
\]
with constants $c$ independent of $0 \leq s < t <T$ and $x,y \in \R^d$. 
	
	\begin{corollary}\label{Coro: homotopy preserved}
		Trajectories are continuous in space and time and preserve the homotopy type of initial data almost surely.
	\end{corollary}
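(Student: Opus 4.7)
The plan is to combine the joint Hölder estimate derived in the excerpt just above the statement with a one-point compactification of $\R^d$ and the Sobolev embedding $\H^2 \hookrightarrow C_0$ valid in $d \leq 3$, so that the continuous curve $t \mapsto \m(t)$ itself becomes a based homotopy in the category of continuous maps $(S^d,\infty) \to (\S^2, e_3)$.

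The first step is to work on the $\P$-full event on which $\nabla \m \in L^\infty([0,T]; \H^1)$ and $\m - e_3 \in \mathcal{C}^\sigma([0,T]; \L^2)$ as furnished by Theorem \ref{Theorem: E!}. On this event, for any fixed $\tau \in (d/2, 2)$, the interpolation and Sobolev chain already displayed yields
\[
|\m(s,x) - \m(t,y)| \leq C\bigl(|x-y|^{\tau - d/2} + |s-t|^{\sigma(2-\tau)/2}\bigr),
\]
so $\m$ admits a jointly Hölder continuous representative on $[0,T] \times \R^d$. This proves the first assertion and, for every $t$, places $\m(t) - e_3 \in \H^2 \hookrightarrow C_0(\R^d; \R^3)$, whence $\m(t,\cdot)$ extends continuously to the one-point compactification $S^d$ with value $e_3$ at infinity and defines an element of $\pi_d(\S^2)$.

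Homotopy preservation then reduces to showing that the space-time representative extends to a continuous map $\tilde{\m} : [0,T] \times S^d \to \S^2$ with $\tilde{\m}(t,\infty) \equiv e_3$. This is where the main (though mild) obstacle lies: the uniform-in-time tail decay
\[
\lim_{R \to \infty} \sup_{t \in [0,T]} \sup_{|x|>R} |\m(t,x) - e_3| = 0.
\]
I would establish this by contradiction. If a sequence $(t_n, x_n)$ realized $|x_n| \to \infty$ and $|\m(t_n, x_n) - e_3| \geq \delta > 0$, then passing to a subsequence with $t_n \to t_\infty \in [0,T]$ and invoking the joint Hölder estimate above gives $|\m(t_\infty, x_n) - e_3| \geq \delta/2$ for all large $n$, contradicting $\m(t_\infty) - e_3 \in C_0(\R^d)$ established in the previous step.

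With $\tilde{\m}$ continuous on the compact space $[0,T] \times S^d$ and fixing the base point $\infty \mapsto e_3$, the restrictions $\tilde{\m}(0,\cdot)$ and $\tilde{\m}(t,\cdot)$ are based-homotopic by construction; since $\pi_d(\S^2)$ is discrete, the homotopy class $[\tilde{\m}(t)]$ is constant in $t$ and coincides with $[\m_0]$ throughout $[0,T]$, almost surely. Apart from the tail-decay argument, every ingredient is a direct consequence of the estimates delivered by Theorem \ref{Theorem: E!} and the interpolation–Sobolev computation already laid out in the excerpt.
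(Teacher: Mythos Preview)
Your argument is correct and follows the paper's approach: the paper's proof consists precisely of the interpolation--Sobolev computation preceding the corollary, followed by the bare assertion that space-time H\"older continuity makes $t\mapsto\m(t)$ a homotopy of maps from $\R^d$. Your compactification to $S^d$ and the uniform tail-decay argument supply the details the paper leaves implicit, which is exactly the standard way to give ``homotopy type'' a precise meaning for maps $\R^d\to\S^2$ with prescribed value at infinity.
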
	
%	\begin{proof} 
%		For $0<\sigma<2$ and $0 \le s <t$ we have 
%		\[
%		\|\boldsymbol m (s) -\boldsymbol m(t) \|_{\dot{H}^\sigma}^2 \le
%		\|\boldsymbol m (s) -\boldsymbol m(t) \|_{L^2}^{2-\sigma}
%		\|\boldsymbol m (s) -\boldsymbol m(t) \|_{\dot{H}^2}^{\sigma} 
%		\]
%		where the fractional homogeneous Sobolev norm is given by
%		\[
%		\|f\|_{\dot{H}^\sigma}^2 = \sum_{k \in \mathbb{Z}^d} |k|^{2 \sigma} |\hat{f}(k)|^2.
%		\]
%		The $H^\sigma$ norm controls the summability of Fourier coefficients and hence
%		the $\sup$ norm of $f$ if $\sigma>n/2$. Thus continuity follows from the bounds in the Theorem so that that flow defines a homotopy of maps from $\mathbb{T}^d$ to $\mathbb{S}^2$.
		
%		For $ \sigma \in (0,2) $ and $ s \in [0,t) $, we have 
%		\begin{align*}
%			\left| \m(s) -\m(t) \right|_{\dot{\H}^\sigma}^2
%			\leq \left|\m(s) -\m(t) \right|_{\L^2}^{2-\sigma}
%			\left|\m(s) -\m(t) \right|_{\dot{\H}^2}^{\sigma} 
%		\end{align*}
%		where the fractional homogeneous Sobolev norm is given by
%		\begin{align*}
%			\|f\|_{\dot{\H}^\sigma}^2 = \sum_{k \in \mathbb{Z}^d} |k|^{2 \sigma} |\hat{f}(k)|^2.
%		\end{align*}
%		The $ \H^\sigma $-norm controls 
%		%the summability of \textcolor{red}{Fourier coefficients} and hence 
%		the $\sup$-norm of $f$ if $ \sigma > \frac{d}{2} $. 
%		Thus continuity follows from the bounds in Theorem \ref{Theorem: E!} so that that flow defines a homotopy of maps from $\mathbb{R}^d$ to %$\mathbb{S}^2$.
%	\end{proof}

\section{Approximating equation}\label{Section: Approximating equation}		
	Since our desired solution $ \m $ is not a function in $ \L^2 $, it is more convenient to define $ m:= \m-e_3 $  and to study the following equation for $m$ in $\L^2$, as in \cite{DoresicMelcher}: 
	\begin{equation}\label{eq: dm}
		\d m(t) 
		= F(m(t)+e_3) \dt + \frac{1}{2}\sum_{k=1}^\infty S_k(m(t)) \dt + \sum_{k=1}^\infty G_k(m(t)) \dW_k(t), 
		\quad
		m(0) = m_0 \in \H^2.
	\end{equation}
	%	where $ F(m+e_3) = F(m) + H(m) $ and 
	%	\begin{align*}
		%		H(m) 
		%		&= e_3 \times \la \Delta m + \Delta^2 m \ra 
		%		+ \alpha m \times \la e_3 \times (\Delta m + \Delta^2 m) \ra 
		%		+ \alpha e_3 \times \la (m+e_3) \times (\Delta m + \Delta^2 m - he_3) \ra. 
		%	\end{align*}
	
\subsection{Mollification with cut-off}\label{Section: Mollification with cut-off}
		For \eqref{eq: dm}, we construct an approximating equation using the modified Galerkin method in \cite[Chapter 15, Section 7]{Taylor_book} combined with a cut-off function which controls the vector length of the approximation. 
	
	Let $ J_\epsilon $ denote a Friedrich mollifier on $ \R^d $, such that for $ u \in \mathcal{S}(\R^d) $, 
	\begin{align*}
		J_\epsilon u(x) 
		&:= (j_\epsilon * u)(x) 
		= \int_{\R^d} j_\epsilon(x-y) u(y) \dy
		= \frac{1}{\epsilon^d} \int_{\R^d} j\la \frac{x-y}{\epsilon} \ra u(y) \dy,
	\end{align*}
	where $ j \in \mathcal{S}(\R^d) $ is real-valued and compactly supported with $ \int_{\R^d} j(y) \dy = 1 $. 	
	%	We collect some properties of $ J_\epsilon $.
	%	\begin{enumerate}[label=(\roman*)]
		%		\ie
		%		\item 
		%		For $ u \in \L^1 $, $ J_\epsilon u \in \mathcal{C}^\infty(\R^2) $.
		%		
		%		\item 
		%		Let $ u,w \in E = \L^\infty $ or $ \H^k $ for $ k \geq 0 $ (with $ \H^0 := \L^2 $). 
		%		Then $ |J_\epsilon u- J_\epsilon w|_{E} \leq |u-w|_{E} $ for all $ \epsilon > 0 $. 
		%		
		%		\item 
		%		Let $ u \in \H^k $. 
		%		Then $ \nabla^\alpha J_\epsilon u = (\nabla^\alpha j_\epsilon) * u \in \H^k $ for all $ \alpha \in \mathbb{N} $, and $ \nabla^\alpha J_\epsilon u = J_\epsilon \nabla^\alpha u $ for $ \alpha \leq k $. 
		%		Moreover,
		%		$ |J_\epsilon u -u|_{\H^k} \to 0 $ as $ \epsilon \to 0 $. 
		%		
		%		\item 
		%		For $ u,w \in \L^2 $, $ \langle J_\epsilon u, w \rangle_{\L^2} = \langle u, J_\epsilon w \rangle_{\L^2} $. 
		%	\end{enumerate} 
	
	Fix $ R > 1 $. 
	Let $ \psi_R: [0,\infty) \to [0,1] $ be a smooth non-increasing cut-off function such that $ \psi_R(y) = 1 $ for $ y \in [0,R] $ and $ \psi_R(y)=0 $ for $ y \geq R+1 $.  
	Then for a sufficiently smooth function $ u: [0,T] \times \R^d \to \R^3 $, 
	\begin{align*}
		\frac{1}{2} \psi_R(|u(t,x)|^2) |u(t,x)|^2 \leq R, \quad \forall (t,x) \in [0,T] \times \R^d. 
	\end{align*} 
	Let $ f^{(i)} $ denote the $ i $-th derivative of a smooth function $ f: \R \to \R $. 
	We have
	\begin{equation}\label{eq: grad psi}
		\begin{aligned}
			\frac{1}{2}\nabla \psi_R(|u|^2)
			&= \psi_R^{(1)}(|u|^2) \lb u, \nabla u \rb, \\
			\frac{1}{2} \Delta \psi_R(|u|^2)
			&= \psi_R^{(1)}(|u|^2) \la |\nabla u|^2 + \lb u, \Delta u \rb \ra 
			+ 2\psi_R^{(2)}(|u|^2) \lb u, \nabla u \rb^2.
		\end{aligned}
	\end{equation}
	Moreover, for $ p \geq 0 $ and any non-negative integer $ i $, there exists $ c(R)= c(R,p) $ such that
	\begin{align*}
		\psi_R^{(i)}(|u(t,x)|^2) |u(t,x)|^p \leq c(R), \quad \forall (t,x) \in [0,T] \times \R^d,
	\end{align*}
	which implies that on $ [0,T] \times \R^d $, 
	\begin{equation}\label{eq: grad psi |u|^p}
		\begin{aligned}
			\nabla \psi_R(|u|^2) |u|^p &\leq c(R) |\nabla u|, \\
			\Delta \psi_R(|u|^2) |u|^p &\leq c(R) \la |\nabla u|^2 + |\Delta u| \ra. 
		\end{aligned}
	\end{equation}
	
	For $ \epsilon > 0 $, consider the approximating equation:
	\begin{equation}\label{eq: dm-epsilon-R}
		\d m_\epsilon(t)
		= F^R_\epsilon(m_\epsilon(t)) \dt + \frac{1}{2} \sum_{k=1}^\infty S_{k,\epsilon}(m_\epsilon(t)) \dt + \sum_{k=1}^\infty G_{k,\epsilon}(m_\epsilon(t)) \dW_k(t), 
		\quad
		m_\epsilon(0) = J_\epsilon m_0, 
	\end{equation}
	where 
	\begin{align*}
		F^R_\epsilon(m_\epsilon) &= J_\epsilon \la \psi_R(|J_\epsilon m_\epsilon+e_3|^2) \bar{F}(J_\epsilon m_\epsilon + e_3) -  \nabla_v J_\epsilon m_\epsilon \ra, \\
		S_{k,\epsilon}(m_\epsilon) &= J_\epsilon S_k(J_\epsilon m_\epsilon), \\
		G_{k,\epsilon} (m_\epsilon) &= J_\epsilon G_k(J_\epsilon m_\epsilon).
	\end{align*}
	The cut-off function $ \psi_R $ is only required to address the nonlinear term $ \bar{F} $. 
	For the linear terms, we integrate-by-parts to obtain desired estimates as shown in the proof of Lemma \ref{Lemma: meps H2-est}.

\subsection{Uniform estimates of $ m_\epsilon $}\label{Section: Uniform estimates of m-eps}
	We first show that the approximating equation \eqref{eq: dm-epsilon-R} admits a unique solution $ m_\epsilon $ in Lemma \ref{Lemma: dm-epsilon-R has soln}, and then deduce uniform estimates of $ m_\epsilon $ in Lemma \ref{Lemma: meps H2-est}. At the end of this section, we estimate the deviation of $ m_\epsilon+e_3 $ from the unit sphere in Lemma \ref{Lemma: |meps+e3| <= remainder}. 
	
	\begin{lemma}\label{Lemma: dm-epsilon-R has soln}
		For every $ \epsilon>0 $, there exists a unique solution $ m_\epsilon $ of \eqref{eq: dm-epsilon-R}, where $ m_\epsilon $ is a progressively measurable process taking values in $ \H^2 $, such that 
		\begin{align*}
			\sup_{t \in [0,T]} \E \left[ |m_\epsilon(t)|_{\H^2}^2 \right] < \infty, 
		\end{align*}
		and for every $ t \in [0,T] $, the following equality holds $ \P $-a.s. in $ \L^2 $
		\begin{align*}
			m_\epsilon(t)
			= J_\epsilon m_0 + \int_0^t F^R_\epsilon(m_\epsilon(s)) \ds + \frac{1}{2} \sum_{k=1}^\infty \int_0^t S_{k,\epsilon}(m_\epsilon(s)) \ds + \sum_{k=1}^\infty \int_0^t G_{k,\epsilon}(m_\epsilon(s)) \dW_k(s). 
		\end{align*}
	\end{lemma}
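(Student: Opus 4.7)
The plan is to recognise \eqref{eq: dm-epsilon-R} as a stochastic evolution equation in a Hilbert space with globally Lipschitz and linearly growing coefficients, and then invoke a classical Picard-type existence/uniqueness theorem. The mollifier $J_\epsilon$ and the cut-off $\psi_R$ play complementary roles: $J_\epsilon$ turns spatial derivatives of any order into bounded linear operators on $\L^2$ with $\epsilon$-dependent norm (for every integer $s\geq 0$, $|J_\epsilon u|_{\H^s} \leq C(\epsilon,s)\,|u|_{\L^2}$), while $\psi_R$ supplies a uniform $L^\infty$ bound on the factor $J_\epsilon m_\epsilon + e_3$ appearing inside $\bar F$, which by virtue of the estimates \eqref{eq: grad psi |u|^p} is also Lipschitz with respect to its argument.

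First I would verify that $F^R_\epsilon$, $S_{k,\epsilon}$ and $G_{k,\epsilon}$ map $\L^2\to\L^2$ and are globally Lipschitz of linear growth, with constants depending on $R,\epsilon$. Writing $\bar F$ term by term, each contribution has the schematic form $J_\epsilon\bigl[\,\Phi(J_\epsilon u)\cdot L(J_\epsilon u)\bigr]$, where $\Phi$ is a bounded-in-$L^\infty$ function of $J_\epsilon u$ (its boundedness being provided by $\psi_R$) and $L\in\{\Delta J_\epsilon,\Delta^2 J_\epsilon,\nabla_v J_\epsilon\}$ is a bounded linear operator on $\L^2$. Since $y\mapsto \psi_R(|y+e_3|^2)P(y+e_3)$ is globally Lipschitz on $\R^3$ for every polynomial $P$ of degree $\leq 2$, a telescoping argument yields
\[
|F^R_\epsilon(u)-F^R_\epsilon(v)|_{\L^2}\leq C(R,\epsilon)\,|u-v|_{\L^2},\qquad |F^R_\epsilon(u)|_{\L^2}\leq C(R,\epsilon)\bigl(1+|u|_{\L^2}\bigr).
\]
For the linear maps $G_{k,\epsilon},S_{k,\epsilon}$ the assumption \eqref{eq: g in W2inf} immediately gives
\[
\sum_{k=1}^\infty\bigl(|G_{k,\epsilon}(u)-G_{k,\epsilon}(v)|_{\L^2}^2+|S_{k,\epsilon}(u)-S_{k,\epsilon}(v)|_{\L^2}^2\bigr)\leq C(\epsilon)\,q^2\,|u-v|_{\L^2}^2,
\]
together with an analogous linear-growth bound.

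With these estimates in hand, and with the initial datum $J_\epsilon m_0$ belonging to $\H^s$ for every $s$ (since $m_0\in\H^2$), the existence of a unique $(\mathcal F_t)$-progressively measurable solution $m_\epsilon\in L^2(\Omega;\mathcal C([0,T];\L^2))$ to the integral form of \eqref{eq: dm-epsilon-R} follows from the standard Banach fixed-point argument for Hilbert-space-valued SDEs (as in Da Prato--Zabczyk or Prévôt--Röckner). To upgrade the regularity, I would observe that the outer $J_\epsilon$ takes $\L^2$ continuously into $\H^s$ for every $s$, so the same Lipschitz/growth estimates hold with $\L^2$ replaced by $\H^2$ (with different $R,\epsilon$-dependent constants). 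Equivalently, an application of Itô's formula to $|m_\epsilon(t)|_{\H^2}^2$, combined with the Burkholder--Davis--Gundy inequality and Gronwall's lemma, yields $\sup_{t\in[0,T]}\E|m_\epsilon(t)|_{\H^2}^2<\infty$, and the equality in $\L^2$ is the SDE itself.

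The main obstacle is purely technical: verifying the Lipschitz estimate for the cubic term $\psi_R(|J_\epsilon u+e_3|^2)\,(J_\epsilon u+e_3)\times\bigl[(J_\epsilon u+e_3)\times\Delta^2 J_\epsilon u\bigr]$ and its Laplacian analogue requires splitting the difference so that the three copies of $J_\epsilon u+e_3$ can be handled by the pointwise bounds supplied by $\psi_R$ and \eqref{eq: grad psi |u|^p}, while the remaining $\Delta^2 J_\epsilon$-factor is controlled in $\L^2$ by the mollifier bound $|\Delta^2 J_\epsilon u|_{\L^2}\leq C_\epsilon|u|_{\L^2}$. The resulting constants unavoidably blow up as $\epsilon\downarrow 0$ or $R\to\infty$, which is why the $\epsilon,R$-uniform estimates needed for the passage to the limit are deferred to Lemma \ref{Lemma: meps H2-est}.
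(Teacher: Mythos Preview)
Your overall strategy is the same as the paper's: recognise the mollified, cut-off coefficients as Lipschitz maps on a Hilbert space and apply a fixed-point argument for Hilbert-space SDEs. There is, however, a gap in your claim that $F^R_\epsilon$ is \emph{globally} Lipschitz on $\L^2$ (or $\H^2$). In the telescoping of a term of the schematic form $\Phi(J_\epsilon u)\,L(J_\epsilon u)$, the piece
\[
\bigl[\Phi(J_\epsilon u)-\Phi(J_\epsilon v)\bigr]\,L(J_\epsilon u)
\]
is controlled in $\L^2$ by $C_R\,|J_\epsilon(u-v)|_{\L^\infty}\,|L(J_\epsilon u)|_{\L^2}\le C_{R,\epsilon}\,|u-v|\,|u|$, and the factor $|u|$ cannot be removed: the cut-off $\psi_R$ bounds the \emph{pointwise} size of the prefactor $\Phi$, not of the derivative factor $\Delta^2 J_\epsilon u$. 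Thus $F^R_\epsilon$ is only \emph{locally} Lipschitz. Your linear-growth bound, on the other hand, is correct, since $\Phi$ is uniformly bounded.

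This does not break the argument, but it means you cannot invoke the global fixed-point theorem directly; one needs the standard truncation/localisation step. The paper makes this explicit: it works throughout on $\H^2$, replaces $F^R_\epsilon$, $S_{k,\epsilon}$, $G_{k,\epsilon}$ by globally Lipschitz modifications (radial retraction onto the $\H^2$-ball of radius $n$), obtains a unique solution for each $n$ by Banach's fixed point, and then removes the truncation by stopping times. With that correction, your proposal and the paper's proof coincide.
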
		
	\begin{proof}
		Fix $ \epsilon > 0 $. 
		We first verify that $ F_\epsilon^R $, $ S_{k,\epsilon} $ and $ G_{k,\epsilon} $ are locally Lipschitz on $ \H^2 $. 
		Let $ u,w \in \H^2 $. 
		The derivatives of $ j $ are in $ \mathcal{S}(\R^2) $, bounded, and in $ \L^1 $. 
		Thus, for any non-negative integer $ \sigma $,
		\begin{align*}
			|\nabla^\sigma J_\epsilon u|_{\H^2} 
			&= |(\nabla^\sigma j_\epsilon)*(u + \nabla u + \Delta u)|_{\L^2} \\
			&\leq |\nabla^\sigma j_\epsilon|_{\L^1} |u + \nabla u + \Delta u|_{\L^2} \\
			&\leq c(\epsilon) |u|_{\H^2},
		\end{align*}
		and similarly, 
		\begin{align*}
			|\nabla^\sigma J_\epsilon(u-w)|_{\H^2} 
			&\leq c(\epsilon) |u-w|_{\H^2}.
		\end{align*}
		Recall that $ \H^2 \hookrightarrow \L^\infty $. Let $ f $ be a locally Lipschitz function on $ \H^2 $ with $ f(0)=0 $, then
		\begin{align*}
			|u \times f(u) - w \times f(w) |_{\H^2} 
			&= |(u-w) \times f(u)|_{\H^2} + |w \times (f(u)-f(w)) |_{\H^2} \\
			&\lesssim |u-w|_{\H^2} |f(u)|_{\H^2} + |w|_{\H^2} |f(u)-f(w)|_{\H^2}.
		\end{align*}
		Similar arguments follow for scalar products. 
		Then with the Lipschitz property of $ J_\epsilon $ and $ \psi_R $, it is clear that $ F_\epsilon^R $, $ G_{k,\epsilon} $ and $ S_{k,\epsilon} $ are locally Lipschitz on $ \H^2 $ for $ k \geq 1 $.
		
		Next, let $ \mathcal{E} $ denote the space of $ \H^2 $-valued progressively measurable processes, with the norm 
		\begin{align*}
			|u|_{\mathcal{E}}^2 = \sup_{t \in [0,T]} \E \left[ |u(t)|_{\H^2}^2 \right]. 
		\end{align*}
		For $ n \in {\b N} $, let $ F^R_{\epsilon,n} $, $ S_{k,\epsilon,n} $ and $ G_{k,\epsilon,n} $ denote the Lipschitz modifications of $ F_\epsilon^R $, $ S_{k,\epsilon} $ and $ G_{k,\epsilon} $ on $ \H^2 $, respectively. For example, we set 
		\begin{align*}
			F^R_{\epsilon,n}(u) 
			= \begin{cases} 
				F_\epsilon^R(u) &\text{ if } |u|_{\H^2} \leq n, \\
				F_\epsilon^R\la \frac{n u}{|u|_{\H^2}} \ra &\text{ if } |u|_{\H^2} > n. 
			\end{cases}
		\end{align*}
		Let $ A_n : \mathcal{E} \to \mathcal{E} $ be given by
		\begin{align*}
			A_n(u)(t) 
			&= m_0 + \int_0^t \la F^R_{\epsilon,n}(u(s)) + \frac{1}{2} \sum_{k=1}^\infty S_{k,\epsilon,n}(u(s)) \ra \ds + \sum_{k=1}^\infty \int_0^t G_{k,\epsilon,n}(u(s)) \dW_k(s) \\
			&= m_0 + I_n(t) + M_n(t), 
		\end{align*}
		where $ I_n, M_n \in \mathcal{E} $ by the Lipschitz continuity of $ F^R_{\epsilon,n} $, $ S_{k,\epsilon,n} $ and $ G_{k,\epsilon,n} $. 
		In particular, $ M_n $ is an $ \H^2 $-valued continuous martingale. 
		Once we verify the Lipschitz property of $ A_n $ on $ \mathcal{E} $, standard arguments using Banach fixed point theorem and localisation by stopping times show that there exists a unique solution $ m_{\epsilon} $ in $ \mathcal{E} $ (the limit over $ n $) to the $ \epsilon $-approximating equation \eqref{eq: dm-epsilon-R}.  
	\end{proof}

	In the following lemma, we deduce an $ \H^2 $-estimate of $ m_\epsilon $ which dominates the energy norm $ E(m_\epsilon+e_3) $ but allows us to directly apply Gronwall's inequality to obtain desired estimates. 
	\begin{lemma}\label{Lemma: meps H2-est}
		Let $ |m_0|_{\H^2} \leq c_0 $. 
		Then for $ p \in [1,\infty) $, there exists a constant $ c = c(\tr{c_0,}p,T) $ independent of $ \epsilon $ and $ R $ such that
		\begin{align*}
			\E \left[ \sup_{t \in [0,T]} |m_\epsilon(t)|_{\H^2}^{2p} \right] \leq c e^{cR^{p}},
		\end{align*}
		and for $ \psi^R_\epsilon := \psi_R(|J_\epsilon m_\epsilon+e_3|^2) $, 
		\begin{align*}
			\E \left[ \la \int_0^T |(\psi_\epsilon^R)^\frac{1}{2} (J_\epsilon m_\epsilon+e_3) \times \Delta^2 J_\epsilon m_\epsilon|_{\L^2}^2(t) \dt \ra^p \right] \leq c(1+R^{p}) e^{cR^{p}}.
		\end{align*}
	\end{lemma}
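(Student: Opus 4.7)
\textbf{Proof plan for Lemma \ref{Lemma: meps H2-est}.}

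The plan is to apply It{\^o}'s formula to the $\H^2$-norm $|m_\epsilon|_{\L^2}^2+|\nabla m_\epsilon|_{\L^2}^2+|\Delta m_\epsilon|_{\L^2}^2$ (and to its $p$-th power for the higher moments), identify the sign-definite dissipation produced by the double cross-product in $\bar F$, exploit a Stratonovich--It{\^o} cancellation between the correction $\tfrac12\sum_k S_{k,\epsilon}$ and the quadratic variation of $\sum_k G_{k,\epsilon}\,dW_k$, and close the resulting differential inequality by Young, Burkholder--Davis--Gundy, and Gr\"onwall. For fixed $\epsilon>0$ the mollification renders $m_\epsilon$ as smooth as needed, so every formal manipulation is justified; the real content is to keep all bounds uniform in $\epsilon$, using only $\H^2$-regularity of $m_\epsilon$ and moving derivatives around via the weak forms \eqref{eq: cross Delta^2 weak form}, the self-adjointness of $J_\epsilon$, and integration by parts (exploiting $\operatorname{div} v=0$ for the deterministic transport).

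Writing $\tilde m:=J_\epsilon m_\epsilon+e_3$, the dissipative contribution of the drift is extracted by pairing $F^R_\epsilon$ with $m_\epsilon$, $-\Delta m_\epsilon$, and (via the weak form) $\Delta^2 J_\epsilon m_\epsilon$; the triple product identity gives
\begin{equation*}
\alpha\int \psi^R_\epsilon\,\tilde m\times(\tilde m\times\Delta^2 J_\epsilon m_\epsilon)\cdot \Delta^2 J_\epsilon m_\epsilon\dx = -\alpha\,\big|(\psi^R_\epsilon)^{1/2}\tilde m\times\Delta^2 J_\epsilon m_\epsilon\big|_{\L^2}^2,
\end{equation*}
which will yield the second assertion. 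The conservative cross term $\tilde m\times\Delta^2 J_\epsilon m_\epsilon$, the non-Galilean torque $\gamma\,\tilde m\times\nabla_v J_\epsilon m_\epsilon$, and the mixed-derivative remainders generated by \eqref{eq: cross Delta^2 weak form} are converted, through the pointwise cut-off bounds \eqref{eq: grad psi |u|^p} and the preliminary inequalities of Appendix \ref{Section: preliminary estimates}, into expressions of the form $c(R)\,|m_\epsilon|_{\H^2}^2$, plus a small multiple of $|(\psi^R_\epsilon)^{1/2}\tilde m\times\Delta^2 J_\epsilon m_\epsilon|_{\L^2}^2$ that can be absorbed by Young's inequality.

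For the noise, testing against $m_\epsilon,-\Delta m_\epsilon,\Delta^2 m_\epsilon$ one sees that the Stratonovich correction $\tfrac12 S_{k,\epsilon}$ and the diffusion quadratic variation produced by $G_{k,\epsilon}$ cancel at leading order via the identity
\begin{equation*}
\langle (\nabla_{g_k})^2 u,u\rangle_{\L^2}+|\nabla_{g_k} u|_{\L^2}^2=-\int u\cdot(\nabla_{g_k} u)\,\operatorname{div} g_k\dx,
\end{equation*}
and its $\nabla,\Delta$ analogues. The surviving commutator-type remainders are of order $|g_k|_{W^{2,\infty}}^2|m_\epsilon|_{\H^2}^2$; by Sobolev embedding $H^4\hookrightarrow W^{2,\infty}$ valid for $d\le 3$ and \eqref{eq: g in W2inf}, they sum to a bounded multiple of $|m_\epsilon|_{\H^2}^2$. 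BDG applied to the martingale contribution of $\sum_k G_{k,\epsilon}\,dW_k$ produces a term of the form $c\,\E(\int_0^t q^2|m_\epsilon|_{\H^2}^{2p}\ds)^{1/2}$ that is absorbed by Young after $\sup$ is taken on the left-hand side.

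Combining the three test levels and raising to $p$ by the chain rule in It{\^o}'s formula yields the schematic inequality
\begin{equation*}
\E\sup_{s\le t}|m_\epsilon(s)|_{\H^2}^{2p}+\alpha^p\,\E\Big(\int_0^t|(\psi^R_\epsilon)^{1/2}\tilde m\times\Delta^2 J_\epsilon m_\epsilon|_{\L^2}^2\dr\Big)^p\le c(1+R^p)+cR^p\int_0^t\E\sup_{s\le r}|m_\epsilon(s)|_{\H^2}^{2p}\dr,
\end{equation*}
which closes by Gr\"onwall and produces both claimed bounds. The principal obstacle is to arrange the integration-by-parts so that every surviving remainder is controlled by $\H^2$-regularity alone: in particular, for $\tilde m\times\Delta^2 J_\epsilon m_\epsilon$ expanded through \eqref{eq: cross Delta^2 weak form} without generating an $\H^3$-quantity, and for the noise, where a naive treatment of $S_{k,\epsilon}$ tested against $\Delta^2 m_\epsilon$ would demand $\H^4$-control of $m_\epsilon$.
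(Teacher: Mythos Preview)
Your plan is correct and follows the paper's proof closely: It\^o on $|m_\epsilon|_{\L^2}^2+|\nabla m_\epsilon|_{\L^2}^2+|\Delta m_\epsilon|_{\L^2}^2$, self-adjointness of $J_\epsilon$ to transfer the test functions onto $u_\epsilon:=J_\epsilon m_\epsilon$, extracting the dissipation $-\alpha|(\psi^R_\epsilon)^{1/2}(u_\epsilon+e_3)\times\Delta^2 u_\epsilon|_{\L^2}^2$ from $\langle\Delta^2 u_\epsilon,\psi^R_\epsilon\bar F(u_\epsilon+e_3)\rangle$, the Stratonovich--It\^o cancellation at each level, then BDG, Young, and Gr\"onwall.

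One imprecision to flag: at the $\Delta^2$-test the commutators coming from $\Delta^2(\nabla_{g_k}u_\epsilon)$ involve derivatives of $g_k$ up to order four (the term $\nabla_{\Delta^2 g_k}u_\epsilon$ appears; see the decomposition into ${\rm T}_{1a},{\rm T}_{1b},{\rm T}_{1c}$ in Appendix~\ref{Section: preliminary estimates}), so the surviving remainder is of order $|g_k|_{H^4}^2|\nabla u_\epsilon|_{\H^1}^2$, not merely $|g_k|_{W^{2,\infty}}^2|m_\epsilon|_{\H^2}^2$. This is exactly why \eqref{eq: g in W2inf} demands $g_k\in H^4$, not just because of the embedding $H^4\hookrightarrow W^{2,\infty}$; your invocation of that embedding does not by itself close the estimate. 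The fix is already in your outline, since you cite the Appendix~\ref{Section: preliminary estimates} inequalities --- just be aware that the full $H^4$ regularity of the noise coefficients is genuinely used at this step.
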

	\begin{proof}
		Let $ u_\epsilon:= J_\epsilon m_\epsilon $ and note that $ \langle u, J_\epsilon w \rangle_{\L^2} = \langle J_\epsilon u, w \rangle_{\L^2} $ for any $ u,w \in \L^2 $. 
		Let $ \psi^{R,(i)}_\epsilon := \psi_R^{(i)} (|J_\epsilon m_\epsilon+e_3|^2) $. 
		We divide the proof into several steps.

		\textbf{Step 1.} Estimate $ \frac{1}{2}|m_\epsilon(t)|^2_{\L^2} $.
		\begin{align*}
			\frac{1}{2}\d|m_\epsilon(t)|^2_{\L^2} 
			&= 
			\lb m_\epsilon, F^R_\epsilon(m_\epsilon) \rb_{\L^2} \dt 
			+ \frac{1}{2} \sum_k \la \lb m_\epsilon, S_{k,\epsilon}(m_\epsilon) \rb_{\L^2} + |G_{k,\epsilon}(m_\epsilon)|^2_{\L^2} \ra \dt \\
			&\quad + \sum_k \lb m_\epsilon, G_{k,\epsilon}(m_\epsilon) \rb_{\L^2} \dW_k(t) \\
			&=: {\rm I}_1 \dt + \frac{1}{2} \sum_k {\rm I}_{2,k} \dt + \sum_k {\rm I}_{3,k} \dW_k(t). 
		\end{align*}
		The assumption $ {\rm div} v = 0 $ and \eqref{eq: <u, Dfu> simplify} yield $\langle u_\epsilon, \nabla_v u_\epsilon \rangle_{\L^2} = 0 $. 
		%\begin{align*}
			%-\lb u_\epsilon, \nabla_v u_\epsilon \rb_{\L^2}
			%&= \lb ({\rm div} v) u_\epsilon + \nabla_v u_\epsilon, u_\epsilon \rb_{\L^2} 
			%= 0.
		%\end{align*}
		Then for $ \delta \in (0,1) $ and $ R>1 $, we have
		\begin{align*}
			{\rm I}_1
			&= \lb m_\epsilon, F^R_\epsilon(m_\epsilon) \rb_{\L^2} \\
			&= \lb u_\epsilon, \psi^R_\epsilon \bar{F}(u_\epsilon +e_3) - \nabla_v u_\epsilon \rb_{\L^2} \\ 
			&= \lb u_\epsilon, \psi^R_\epsilon (u_\epsilon+e_3) \times \la \Delta u_\epsilon + \alpha (u_\epsilon + e_3) \times (\Delta u_\epsilon - h e_3) \ra \rb_{\L^2} \\ 
			&\quad + \lb u_\epsilon, \psi^R_\epsilon (u_\epsilon + e_3) \times \la \Delta^2 u_\epsilon + \alpha (u_\epsilon + e_3) \times \Delta^2 u_\epsilon \ra \rb_{\L^2} \\
			&\quad - \gamma \lb u_\epsilon, \psi^R_\epsilon (u_\epsilon + e_3) \times \nabla_v u \rb_{\L^2} \\
			&\quad -\lb u_\epsilon, \nabla_v u_\epsilon \rb_{\L^2} \\
			%&\leq 
			%\la |\psi^R_\epsilon (u_\epsilon+e_3)|_{\L^\infty} + \alpha |\psi^R_\epsilon |u_\epsilon+e_3|^2|_{\L^\infty} \ra |u_\epsilon|_{\L^2} |\Delta u_\epsilon|_{\L^2}	
			%- \alpha h \int_{\R^d} \psi^R_\epsilon |u_\epsilon \times e_3|^2 \dx \\
			%&\quad + |(\psi^R_\epsilon)^\frac{1}{2} (u_\epsilon + \alpha u_\epsilon \times e_3)|_{\L^2} |(\psi^R_\epsilon)^\frac{1}{2}(u_\epsilon + e_3) \times \Delta^2 u_\epsilon|_{\L^2} \\
			%&\quad + |\gamma \psi^R_\epsilon u_\epsilon|_{\L^2} |\nabla_v u_\epsilon|_{\L^2} \\
			&\leq
			\delta |(\psi^R_\epsilon)^\frac{1}{2}(u_\epsilon + e_3) \times \Delta^2 u_\epsilon|_{\L^2}^2 + c(\delta) R|u_\epsilon|_{\H^2}^2.	
		\end{align*}
		
		The Stratonovich part has a similar term with opposite sign to the It{\^o} correction. Integrating-by-parts, 
		\begin{align*}
			\lb m_\epsilon, S_{k,\epsilon}(m_\epsilon) \rb_{\L^2}
			&= \lb u_\epsilon, (\nabla_{g_k})^2 u_\epsilon \rb_{\L^2} \\
			&= -\lb \nabla_{g_k} u_\epsilon + ({\rm div} g_k) u_\epsilon, \nabla_{g_k} u_\epsilon \rb_{\L^2} \\
			&\leq -|\nabla_{g_k} u_\epsilon|_{\L^2}^2 + |g_k|_{W^{1,\infty}(\R^d;\R^d)}^2 |u_\epsilon|_{\L^2} |\nabla u_\epsilon|_{\L^2},
		\end{align*}
		%		Similarly, since $ {\rm div} g = 0 $, we have
		%		\begin{align*}
			%			\lb  m_\epsilon, (\nabla_g)^2  m_\epsilon \rb_{\L^2} 
			%			&= -|\nabla_g  m_\epsilon |^2_{\L^2} = -| \nabla_g m_\epsilon|^2_{\L^2}, \\
			%			\lb m_\epsilon, -\nabla_g m_\epsilon \rb_{\L^2} 
			%			&= \lb \nabla_g m_\epsilon, m_\epsilon \rb_{\L^2} = 0.
			%		\end{align*}
		%		Thus, the Stratonovich part cancels with the It{\^o} correction, and the diffusion term vanishes.
		where $ |G_{k,\epsilon}(m_\epsilon)|^2_{\L^2} = |J_\epsilon \nabla_{g_k} u_\epsilon|_{\L^2}^2 \leq |\nabla_{g_k} u_\epsilon|_{\L^2}^2 $, leaving
		\begin{align*}
			\sum_k {\rm I}_{2,k} 
			&\lesssim \sum_k |g_k|_{W^{1,\infty}(\R^d;\R^d)}^2 |u_\epsilon|_{\H^1}^2. 
		\end{align*} 
		Similarly, for the diffusion term, by \eqref{eq: <u, Dfu> simplify}, 
		\begin{align*}
			{\rm I}_{3,k} 
			= -\lb u_\epsilon, \nabla_{g_k} u_\epsilon \rb_{\L^2} 
			%&= \lb \nabla_{g_k} u_\epsilon + ({\rm div} g_k) u_\epsilon, u_\epsilon \rb_{\L^2} \\
			&= \frac{1}{2} \int_{\R^d} ({\rm div} g_k) |u_\epsilon|^2 \dx \\
			&\leq \frac{1}{2} |g_k|_{W^{1,\infty}(\R^d;\R^d)} |u_\epsilon|_{\L^2}^2,
		\end{align*}
		and by the Burkholder-Davis-Gundy inequality, for $ p \in [1,\infty) $, 
		\begin{align*}
			\E \left[ \sup_{s \in [0,t]} \left| 2\int_0^s \sum_k {\rm I}_{3,k} \dW_k(r) \right|^p \right]
			&\leq cb_p \E \left[ \la \sum_k |g_k|_{W^{1,\infty}(\R^d;\R^d)}^2 \int_0^t |u_\epsilon|_{\L^2}^4 \ds \ra^\frac{p}{2} \right] \\
			&\leq cb_p \E \left[ \sup_{s \in [0,t]} |u_\epsilon(s)|_{\L^2}^p \la \sum_k |g_k|_{W^{1,\infty}(\R^d;\R^d)}^2 \int_0^t |u_\epsilon|_{\L^2}^2 \ds \ra^\frac{p}{2} \right] \\
			&\leq \delta^p \E \left[ \sup_{s \in [0,t]} |u_\epsilon(s)|_{\L^2}^{2p} \right] \\
			&\quad + c\delta^{-p} b_p^2 \la \sum_k |g_k|_{W^{1,\infty}(\R^d;\R^d)}^2 \ra^p \E \left[\int_0^t \sup_{r \in [0,s]} |u_\epsilon(r)|_{\L^2}^{2p} \ds \right].
		\end{align*}

		\textbf{Step 2.} Estimate $ \frac{1}{2}|\nabla m_\epsilon(t)|^2_{\H^1} $.
		\begin{align*}
			\frac{1}{2} \d \la |\nabla m_\epsilon|_{\L^2}^2 + |\Delta m_\epsilon|_{\L^2}^2 \ra 
			&= \lb -\Delta m_\epsilon + \Delta^2 m_\epsilon, F^R_\epsilon(m_\epsilon) \rb_{\L^2} \dt \\
			&\quad + \frac{1}{2} \sum_k \la \lb -\Delta m_\epsilon, S_{k,\epsilon}(m_\epsilon) \rb_{\L^2} + |\nabla G_{k,\epsilon}(m_\epsilon)|^2_{\L^2} \ra \dt \\
			&\quad + \frac{1}{2} \sum_k \la \lb \Delta^2 m_\epsilon, S_{k,\epsilon} (m_\epsilon) \rb_{\L^2} + |\Delta G_{k,\epsilon} (m_\epsilon)|^2_{\L^2} \ra \dt \\
			&\quad + \sum_k \lb -\Delta m_\epsilon + \Delta^2 m_\epsilon, G_{k,\epsilon}(m_\epsilon) \rb_{\L^2} \dW_k(t) \\
			&=: {\rm II}_{1} \dt + \frac{1}{2} \sum_k {\rm II}_{2,k} \dt + \sum_k {\rm II}_{3,k} \dW_k(t). 
		\end{align*}
		We address each term in the following calculations (i) -- (iii). 	
		\begin{enumerate}[label=(\roman*), wide, labelwidth=!, labelindent=0pt]
			\item 
			Estimate $ {\rm II}_{1} $. 
			\begin{align*}
				{\rm II}_{1}
				&= \lb -\Delta m_\epsilon + \Delta^2 m_\epsilon, F^R_\epsilon(m_\epsilon) \rb_{\L^2} \\
				&= \lb -\Delta u_\epsilon + \Delta^2 u_\epsilon, \psi^R_\epsilon \bar{F}(u_\epsilon +e_3) - \nabla_v u_\epsilon \rb_{\L^2} \\ 
				&= \lb -\Delta u_\epsilon, \psi^R_\epsilon (u_\epsilon+e_3) \times \Delta^2 u_\epsilon \rb_{\L^2} 
				+ \lb \Delta^2 u_\epsilon, \psi^R_\epsilon (u_\epsilon+e_3) \times \Delta u_\epsilon \rb_{\L^2} \\
				&\quad + \alpha |(\psi^R_\epsilon)^\frac{1}{2} (u_\epsilon+e_3) \times \Delta u_\epsilon|_{\L^2}^2 
				+ \alpha \lb -\Delta u_\epsilon, \psi^R_\epsilon (u_\epsilon +e_3) \times \la (u_\epsilon+e_3) \times \Delta^2 u_\epsilon \ra \rb_{\L^2} \\
				&\quad - \alpha |(\psi^R_\epsilon)^\frac{1}{2} (u_\epsilon+e_3) \times \Delta^2 u_\epsilon|_{\L^2}^2 
				+ \alpha \lb \Delta^2 u_\epsilon, \psi^R_\epsilon (u_\epsilon +e_3) \times \la (u_\epsilon+e_3) \times \Delta u_\epsilon \ra \rb_{\L^2} \\
				&\quad + \lb \Delta u_\epsilon - \Delta^2 u_\epsilon, h \psi^R_\epsilon (u_\epsilon + e_3) \times \la -u_\epsilon + (u_\epsilon \times e_3) \ra \rb_{\L^2} \\
				&\quad + \gamma \lb \Delta u_\epsilon - \Delta^2 u_\epsilon, \psi^R_\epsilon (u_\epsilon+e_3) \times \nabla_v u_\epsilon \rb_{\L^2} \\
				&\quad + \lb \Delta u_\epsilon - \Delta^2 u_\epsilon, \nabla_v u_\epsilon \rb_{\L^2} \\
				&\leq
				-(\alpha-\delta) |(\psi^R_\epsilon)^\frac{1}{2} (u_\epsilon+e_3) \times \Delta^2 u_\epsilon|_{\L^2}^2 
				+ c(\delta) \la |u_\epsilon|_{\L^2}^2 + |v|_{L^\infty(\R^d;\R^d)} |\nabla u_\epsilon|_{\L^2}^2 + (1+R)|\Delta u_\epsilon|_{\L^2}^2 \ra \\
				&\quad + \lb \Delta u_\epsilon - \Delta^2 u_\epsilon, \nabla_v u_\epsilon \rb_{\L^2},
			\end{align*}
			where
			%\begin{align*}
			%	\lb \psi^R_\epsilon (u_\epsilon+e_3) \times \Delta^2 u_\epsilon, e_3 \rb_{\L^2} 
			%	%&= \lb \psi^R_\epsilon u_\epsilon \times \Delta^2 u_\epsilon, e_3 \rb_{\L^2} \\
			%	&= -\lb (\psi^R_\epsilon)^\frac{1}{2} (u_\epsilon + e_3) \times \Delta^2 u_\epsilon, (\psi^R_\epsilon)^\frac{1}{2} u_\epsilon \rb_{\L^2}
			%\end{align*}
			\begin{align*}
				\lb \Delta u_\epsilon, \nabla_v u_\epsilon \rb_{\L^2} 
				&\leq |v|_{L^\infty(\R^d;\R^d)} |\nabla u_\epsilon|_{\L^2} |\Delta u_\epsilon|_{\L^2}, 
			\end{align*}
			and with $ {\rm div} v = 0 $, we have $ \langle \Delta u_\epsilon, \nabla_v \Delta u_\epsilon \rangle_{\L^2} = 0 $ and 
			\begin{align*}
				\lb \Delta^2 u_\epsilon, \nabla_v u_\epsilon \rb_{\L^2} 
				&= \lb \Delta u_\epsilon, \Delta \nabla_v u_\epsilon \rb_{\L^2} \\
				&= \lb \Delta u_\epsilon, \nabla_v \Delta u_\epsilon + \nabla_{\Delta v} u_\epsilon \rb_{\L^2} + 2 \sum_{i,j=1}^d \lb \Delta u_\epsilon, \partial_i v_j \partial_{ij} u_\epsilon \rb_{\L^2} \\
				%&\lesssim 
				%|v|_{W^{2,\infty}(\R^d;\R^d)} \la |\Delta u_\epsilon|_{\L^2} |\nabla u_\epsilon|_{\L^2} + |\Delta u_\epsilon|_{\L^2}^2 \ra \\
				&\lesssim 
				|v|_{W^{2,\infty}(\R^d;\R^d)} |\nabla u_\epsilon|_{\H^1}^2.	
			\end{align*}
			Thus, 
			\begin{align*}
				%\lb -\Delta m_\epsilon + \Delta^2 m_\epsilon, F^R_\epsilon(m_\epsilon) \rb_{\L^2}
				{\rm II}_1 &\leq 
				-(\alpha-\delta) |(\psi^R_\epsilon)^\frac{1}{2} (u_\epsilon+e_3) \times \Delta^2 u_\epsilon|_{\L^2}^2 \\
				&\quad + c(\delta)(1+ |v|_{W^{2,\infty}(\R^d;\R^d)} ) \la |u_\epsilon|_{\H^1}^2 + (1+R)|\Delta u_\epsilon|_{\L^2}^2 \ra. 
			\end{align*}

			\item 
			Estimate $ {\rm II}_{2,k} $. 
			\begin{align*}
				{\rm II}_{2,k}
				&= \la -\lb \Delta m_\epsilon, S_{k,\epsilon}(m_\epsilon) \rb_{\L^2} 
				+ |\nabla G_{k,\epsilon}(m_\epsilon)|^2_{\L^2} \ra 
				+ \la \lb \Delta^2 m_\epsilon, S_{k,\epsilon}(m_\epsilon) \rb_{\L^2} 
				+ |\Delta G_{k,\epsilon}(m_\epsilon)|^2_{\L^2} \ra \\
				&=: {\rm II}_{2a,k} + {\rm II}_{2b,k}. 
			\end{align*}
			For the It{\^o} corrections, note that $ J_\epsilon $ commutes with $ \nabla $ and $ \Delta $. 
			We have
			\begin{align*}
				{\rm II}_{2a,k}
				&= -\lb \Delta u_\epsilon, (\nabla_{g_k})^2 u_\epsilon \rb_{\L^2} 
				+ |J_\epsilon \nabla (\nabla_{g_k} u_\epsilon) |^2_{\L^2} \\
				&\lesssim |g_k|^2_{W^{1,\infty}(\R^d;\R^d)} |\nabla u_\epsilon|_{\H^1}^2. 
			\end{align*}
			For $ {\rm II}_{2b,k} $, 
			we first re-write the Stratonovich term: 
			\begin{align*}
				\lb \Delta^2 m_\epsilon, S_{k,\epsilon}(m_\epsilon) \rb_{\L^2} 
				&= \lb \Delta^2 u_\epsilon, (\nabla_{g_k})^2 u_\epsilon \rb_{\L^2}. 
			\end{align*}
			Then we estimate the $ \Delta G_{k,\epsilon} $ part. 
			By Lemma \ref{Lemma: D4 vs D1 estimates} and \eqref{eq: T1a, T1b, T1c, <D4u,D1fu>}, 
			taking $ u = u_\epsilon $ and $ f = g_k $, we obtain 
			\begin{align*}
				|\Delta G_{k,\epsilon}(m_\epsilon)|^2_{\L^2}
				&= |J_\epsilon \Delta \nabla_{g_k} u_\epsilon|_{\L^2}^2 \\
				&\leq |\Delta \nabla_{g_k} u_\epsilon|_{\L^2}^2 \\
				&= \lb \Delta^2 \nabla_{g_k} u_\epsilon, \nabla_{g_k} u_\epsilon \rb_{\L^2} \\
				&= \lb \nabla_{g_k} \Delta^2 u_\epsilon + 4{\rm T}_{1a}(u_\epsilon) + 2{\rm T}_{1b}(u_\epsilon) + {\rm T}_{1c}(u_\epsilon), \nabla_{g_k} u_\epsilon \rb_{\L^2} \\
				&= 
				-\lb \Delta^2 u_\epsilon, (\nabla_{g_k})^2 u_\epsilon \rb_{\L^2} 
				-\lb \Delta^2 u_\epsilon, ({\rm div} g_k) \nabla_{g_k} u_\epsilon \rb_{\L^2} \\
				&\quad + \lb 4{\rm T}_{1a}(u_\epsilon) + 2{\rm T}_{1b}(u_\epsilon) + {\rm T}_{1c}(u_\epsilon), \nabla_{g_k} u_\epsilon \rb_{\L^2} \\
				&\leq 
				-\lb \Delta^2 m_\epsilon, S_{k,\epsilon}(m_\epsilon) \rb_{\L^2} 
				+ c |g_k|^2_{H^4(\R^d;\R^d)} |\nabla u_\epsilon|^2_{\H^1}.
			\end{align*}	
			Thus, 
			\begin{align*}
				\sum_k {\rm II}_{2,k} 
				&\lesssim \sum_k |g_k|^2_{H^4(\R^d;\R^d)} |\nabla u_\epsilon|^2_{\H^1}
			\end{align*}

			\item 
			Estimate the diffusion part $ {\rm II}_{3,k} $. 
			\begin{align*}
				{\rm II}_{3,k} 
				&= -\lb \Delta m_\epsilon, G_{k,\epsilon}(m_\epsilon) \rb_{\L^2} 
				+ \lb \Delta^2 m_\epsilon, G_{k,\epsilon}(m_\epsilon) \rb_{\L^2} \\
				&=: {\rm II}_{3a,k} + {\rm II}_{3b,k} .
			\end{align*}
			We have
			\begin{align*}
				{\rm II}_{3a,k}
				&= \lb \Delta u_\epsilon, \nabla_{g_k} u_\epsilon \rb_{\L^2}
				\leq |g_k|_{L^\infty(\R^d;\R^d)} |\Delta u_\epsilon|_{\L^2} |\nabla u_\epsilon|_{\L^2}, 
			\end{align*}
			and 
			\begin{align*}
				{\rm II}_{3b,k}
				&= -\lb \Delta^2 u_\epsilon, \nabla_{g_k} u_\epsilon \rb_{\L^2} \\
				&= -\lb \Delta u_\epsilon, \Delta \nabla_{g_k} u_\epsilon \rb_{\L^2} \\
				&= -\lb \Delta u_\epsilon, \nabla_{g_k} \Delta u_\epsilon + \nabla_{\Delta g_k} u_\epsilon \rb_{\L^2} 
				- 2\sum_{i,j=1}^d \lb \Delta u_\epsilon, \partial_i g_j \partial_{ij} u_\epsilon \rb_{\L^2} \\
				&\leq 
				-\lb \Delta u_\epsilon, \nabla_{g_k} \Delta u_\epsilon \rb_{\L^2} + c |g_k|_{W^{2,\infty}(\R^d;\R^d)} \la |\Delta u_\epsilon|_{\L^2} |\nabla u_\epsilon|_{\L^2} + |\Delta u_\epsilon|_{\L^2}^2 \ra, 
			\end{align*}
			where
			\begin{align*}
				-\lb \Delta u_\epsilon, \nabla_{g_k} \Delta u_\epsilon \rb_{\L^2}
				&= \lb \Delta u_\epsilon, \nabla_{g_k} \Delta u_\epsilon + ({\rm div} g_k) \Delta u_\epsilon \rb_{\L^2} \\
				&= \frac{1}{2}\lb \Delta u_\epsilon, ({\rm div} g_k) \Delta u_\epsilon \rb_{\L^2} \\
				&\lesssim |g_k|_{W^{1,\infty}(\R^d;\R^d)} |\Delta u_\epsilon|_{\L^2}^2.
			\end{align*}
			%Thus, 
			%\begin{align*}
			%	\sum_k ({\rm II}_3)^2 
			%	&\lesssim 
			%	\sum_k |g_k|^2_{W^{2,\infty}(\R^d;\R^d)} \la |\Delta u_\epsilon|_{\L^2}^2 |\nabla u_\epsilon|_{\L^2}^2 + |\Delta u_\epsilon|_{\L^2}^4 \ra.
			%\end{align*}
			Then by Burkholder-Davis-Gundy inequality, for $ p \in [1,\infty) $, 
			\begin{align*}
				\E \left[ \sup_{s \in [0,t]} \left| 2 \int_0^s \sum_k {\rm II}_{3,k} \dW_k(r) \right|^p \right] 
				&\leq cb_p \E \left[ \la \int_0^t \sum_k |g_k|^2_{W^{2,\infty}(\R^d;\R^d)} |\Delta u_\epsilon|^2_{\L^2} |\nabla u_\epsilon|^2_{\H^1} \ds \ra^\frac{p}{2} \right] \\
				&\leq \delta^p \E \left[ \sup_{s \in [0,t]} |\nabla u_\epsilon(s)|^{2p}_{\H^1} \right] \\
				&\quad + c \delta^{-p} b_p^2 \la \sum_k |g_k|^2_{W^{2,\infty}(\R^d;\R^d)} \ra^p \E \left[ \int_0^t \sup_{r \in [0,s]}|\Delta u_\epsilon(r)|_{\L^2}^{2p} \ds \right].
			\end{align*}
			
		\end{enumerate}

		\textbf{Step 3.} Combine estimates of $ \frac{1}{2} |m_\epsilon(t)|_{\H^2}^2 $. 
		
		Recall the assumption \eqref{eq: g in W2inf}. 
		For $ R>1 $, we have 
		\begin{align*}
			&|m_\epsilon(t)|_{\H^2}^2 + 2(\alpha-2\delta)|(\psi_\epsilon^R)^\frac{1}{2} (u_\epsilon+e_3) \times \Delta^2 u_\epsilon|^2_{\L^2}(t) \\
			&\leq |J_\epsilon m_0|_{\H^2}^2
			+ c \int_0^t R|u_\epsilon(s)|_{\H^2}^2 \ds 
			+ 2\sup_{s \in [0,t]} \left| \int_0^s \sum_k ({\rm I}_{3,k} + {\rm II}_{3,k}) \dW_k(r) \right|,
		\end{align*}
		where $ c $ depends on $ |v|_{W^{2,\infty}(\R^d;\R^d)}, \sum_k |g_k|^2_{H^4(\R^d;\R^d)}, \delta^{-1} , T $, but not on $ n $ or $ R $. 
		
		Let $ \delta \in (0,\frac{1}{4}) $ be sufficiently small such that $ \alpha - 2\delta > \frac{1}{2} $. 
		Then since $ |u_\epsilon|_{\H^\sigma} \leq |m_\epsilon|_{\H^\sigma} $ for any $ \sigma \geq 0 $, we have for $ p \in [1,\infty) $ that
		\begin{align*}
			&\E \left[ \sup_{t \in [0,T]} |m_\epsilon(t)|_{\H^2}^{2p} + \la \int_0^T |(\psi_\epsilon^R)^\frac{1}{2} (J_\epsilon m_\epsilon+e_3) \times \Delta^2 J_\epsilon m_\epsilon|^2_{\L^2}(t) \dt \ra^p \right] \\
			&\leq 
			c\E \left[ |J_\epsilon m_0|_{\H^2}^{2p} 
			+ \int_0^T R^p \sup_{s \in [0,t]} |m_\epsilon(s)|_{\H^2}^{2p} \dt \right] 
			+ 2^{p} \delta^p \E \left[ \sup_{t \in [0,T]} |m_\epsilon(t)|^{2p}_{\H^2} \right],
		\end{align*}
		where $ 1-2^p\delta^p > \frac{1}{2} $, and then the last expectation term on the right-hand side can be absorbed into the left-hand side.  
		By Gronwall's inequality, 
		\begin{align*}
			\E \left[ \sup_{t \in [0,T]} |m_\epsilon(t)|_{\H^2}^{2p} \right]
			&\leq c \E \left[ |m_0|_{\H^2}^{2p}  \right] e^{c R^{p}}
			\leq c e^{cR^{p}},
		\end{align*}
		for some constant $ c $ independent of $ n $ and $ R $. 
		As a result, 
		\begin{align*}
			\E \left[ \la \int_0^T |(\psi_\epsilon^R)^\frac{1}{2} (J_\epsilon m_\epsilon+e_3) \times \Delta^2 J_\epsilon m_\epsilon|^2_{\L^2}(t) \dt \ra^p \right]
			&\leq c(1+R^{p}) e^{cR^{p}},
		\end{align*}
		concluding the proof.
	\end{proof}

	As a result of Lemma \ref{Lemma: meps H2-est}, 
	%\begin{equation}\label{eq: psi mx(mxD4)}
	%	\E \left[ \la \int_0^T \left|\psi_\epsilon^R (J_\epsilon m_\epsilon+e_3) \times \la (J_\epsilon m_\epsilon+e_3) \times \Delta^2 J_\epsilon m_\epsilon \ra \right|_{\L^2}^2(t) \dt \ra^p \right]
	%	\leq c(R),
	%\end{equation}
	%and thus 
	$ F^R_\epsilon(m_\epsilon) + \frac{1}{2}\sum_k S_{k,\epsilon}(m_\epsilon) \in L^{p}(\Omega;L^2(0,T;\L^2)) $, and 
	\begin{align*}
		\E \left[ \left| \int_0^T \sum_k G_{k,\epsilon}(m_\epsilon(t)) \dW_k(t) \right|_{W^{\sigma,p}(0,T;\L^2)}^p \right]
		&\lesssim \E \left[ \la \int_0^T \sum_k |g_k|_{L^\infty(\R^d;\R^d)}^2 |\nabla m_\epsilon(t)|_{\L^2}^2 \dt \ra^\frac{p}{2} \right] 
		\leq c(R),
	\end{align*}
	for fixed $ R>1 $, $ \sigma \in (0,\frac{1}{2}) $, $ p \in [2,\infty) $. 
	Thus, with the embedding $ W^{1,2}(0,T;\L^2) \hookrightarrow W^{\sigma,p}(0,T;\L^2) $ for $ \sigma - \frac{1}{p} < \frac{1}{2} $, we have
	\begin{equation}\label{eq: meps in Wsigma,p}
		\E \left[ |m_\epsilon|_{W^{\sigma,p}(0,T;\L^2)}^p \right] \leq c(R). 
	\end{equation}

	Next we examine the vector length of $ M_\epsilon:= m_\epsilon+e_3 $. 
	In Lemma \ref{Lemma: |meps+e3| <= remainder}, we show that although $ M_\epsilon $ does not take values in $ \S^2 $, its deviation from $ \S^2 $ can be controlled by the strength of mollifications.  
	The proof is similar to that in \cite{Melcher_LLG_highdim} which starts from applying It{\^o}'s formula to 
	\begin{equation}\label{eq: defn varphi(meps)}
		\varphi(m_\epsilon) := \frac{1}{4}|1-|m_\epsilon+e_3|^2|_{\L^2_\rho}^2 = \frac{1}{4}|1-|M_\epsilon|^2|_{\L^2_\rho}^2.
	\end{equation}
	Here we work with mollified functions and weighted spaces. 
	Thus, we first identify some remainder terms which arise from the difference between the convolution of products and the product of convolutions, before stating and proving Lemma \ref{Lemma: |meps+e3| <= remainder}. 
	Again, let $ u_\epsilon := J_\epsilon m_\epsilon $, and $ d_\epsilon(f) := J_\epsilon f - f $ for any $ f: [0,T]\times \R^d \to \R $ or $ \R^3 $. 
	Then 
	\begin{equation}\label{eq: defn peps, qeps}
		\begin{aligned}
			p_\epsilon(m_\epsilon)
			&:= J_\epsilon \la (1-|m_\epsilon+e_3|^2) (m_\epsilon+e_3) \rho \ra - (1-|J_\epsilon m_\epsilon+e_3|^2) (J_\epsilon m_\epsilon+e_3) \rho \\
			&= J_\epsilon \la (1-|M_\epsilon|^2) M_\epsilon \rho \ra - (1-|u_\epsilon+e_3|^2) (u_\epsilon+e_3) \rho \\
			&= d_\epsilon((1-|M_\epsilon|^2) M_\epsilon \rho) 
			- (1-|u_\epsilon+e_3|^2) d_\epsilon(m_\epsilon) \rho 
			+ \lb d_\epsilon(m_\epsilon), m_\epsilon+ u_\epsilon + 2e_3 \rb M_\epsilon \rho.
		\end{aligned}  
	\end{equation}
	Since $ |d_\epsilon(f(t))|_{\L^2} \lesssim |f(t)|_{\L^2} $ for any $ t \in [0,T] $, using Lemma \ref{Lemma: meps H2-est} and the continuous embeddings $ \H^2 \hookrightarrow \L^\infty $ and $ \H^1 \hookrightarrow \L^4 $, we have that for any $ t \in [0,T] $ and $ p \in [1,\infty) $, 
	\begin{align*}
		&\E \left[ \int_0^t |p_\epsilon(m_\epsilon)|_{\L^2}^{2p} \ds \right] \\
		&\lesssim \E \left[\int_0^t \la 
		|M_\epsilon|_{\L^2_\rho} + |M_\epsilon|_{\L^\infty} |M_\epsilon|_{\L^4_\rho}^2 
		+ (1+|u_\epsilon|_{\L^\infty}) |m_\epsilon|_{\L^2} 
		+ |M_\epsilon|_{\L^\infty} \la |m_\epsilon|_{\L^\infty} |m_\epsilon + u_\epsilon|_{\L^2} + |m_\epsilon|_{\L^2} \ra
		\ra^{2p} \ds \right] \\
		&\lesssim \E \left[\int_0^t \la (1+|m_\epsilon|_{\H^2}) |m_\epsilon+e_3|_{\H^1_\rho}^2 
		+ (1+|m_\epsilon|_{\H^2}) |m_\epsilon|_{\L^2} 
		+ (1+|m_\epsilon|_{\H^2}) \la |m_\epsilon|_{\H^2}^2 + |m_\epsilon|_{\L^2} \ra \ra^{2p} \ds \right] \\
		&\leq c(R). 
	\end{align*}
	In addition, 
	\begin{equation}\label{eq: |d(f)|_Lprho}
		|d_\epsilon(f)|_{\L^4_\rho}^4 
		\lesssim |f|_{\L^\infty}^2 |d_\epsilon(f)|^2_{\L^2_\rho} 
		\lesssim |f|_{\L^\infty}^2 |d_\epsilon(f)|_{\L^2} |d_\epsilon(f)|_{\L^2_\rho} 
		\lesssim |f|_{\H^2}^3 |d_\epsilon(f)|_{\L^2_\rho}. 
	\end{equation}
	For remainders of higher-order terms, 
	\begin{align*}
		(1-|u_\epsilon+e_3|^2)^2 
		&= (1-|M_\epsilon + d_\epsilon(m_\epsilon)|^2)^2 \\
		&= (1-|M_\epsilon|^2)^2 
		+ \la |d_\epsilon (m_\epsilon)|^2 + 2\langle d_\epsilon(m_\epsilon), M_\epsilon \rangle \ra^2 \\
		&\quad + 2 (|m_\epsilon|^2 + 2 \lb m_\epsilon, e_3 \rb) \la |d_\epsilon(m_\epsilon)|^2 + 2\langle d_\epsilon(m_\epsilon), M_\epsilon \rangle \ra,
	\end{align*}
	Thus,
	\begin{equation}\label{eq: Beps part 1}
		\begin{aligned}
			\left|1-|u_\epsilon+e_3|^2 \right|_{\L^2_\rho}^2 
			&\lesssim
			\left| 1-|M_\epsilon|^2 \right|_{\L^2_\rho}^2 
			+ |d_\epsilon(m_\epsilon)|_{\L^4_\rho}^4 \\
			&\quad + \la |M_\epsilon|_{\L^\infty}^2 + |m_\epsilon|_{\L^\infty}^2 + |m_\epsilon|_{\L^\infty} \ra |d_\epsilon(m_\epsilon)|_{\L^2_\rho}^2 \\
			&\quad + (|m_\epsilon|_{\L^4}^2 + |m_\epsilon|_{\L^2}) |M_\epsilon|_{\L^\infty} |d_\epsilon(m_\epsilon)|_{\L^2_\rho} \\
			&\lesssim
			\left| 1-|M_\epsilon|^2 \right|_{\L^2_\rho}^2 
			+ (1+|m_\epsilon|_{\H^2}^3) |d_\epsilon(m_\epsilon)|_{\L^2_\rho}, 
		\end{aligned}
	\end{equation}
	where the last inequality holds by \eqref{eq: |d(f)|_Lprho} and \eqref{eq: |Ju|_L2rho <= c|u|_L2rho}.

	\begin{lemma}\label{Lemma: |meps+e3| <= remainder}
		There exist constants $ c_1 = c_1(R,T)>0 $ and $ c_2 = c_2(T) > 0 $, both independent of $ \epsilon $, such that
		\begin{align*}
			\E \left[ |1-|m_\epsilon(t)+e_3|^2|_{\L^2_\rho}^2 \right] 
			&\leq 
			c_1 \ e^{c_2} \la |1-|J_\epsilon m_0+e_3|^2|_{\L^2_\rho}^2 
			+ \E \left[ \int_0^t A_\epsilon(m_\epsilon(s)) \ds \right]^\frac{1}{2} \ra,
		\end{align*}
		for any $ t \in [0,T] $, where
		\begin{equation}\label{eq: defn Aeps}
			A_\epsilon(m_\epsilon) := |d_\epsilon(m_\epsilon)|_{\H^1_\rho}^2 + \sum_k |d_\epsilon(\nabla_{g_k} J_\epsilon m_\epsilon)|^2_{\L^2_\rho} + |p_\epsilon(m_\epsilon)|_{\L^2}^2. 
		\end{equation}
	\end{lemma}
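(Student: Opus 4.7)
The plan is to apply It\^o's formula to the convex functional $\varphi(m_\epsilon) = \tfrac{1}{4}|1-|M_\epsilon|^2|_{\L^2_\rho}^2$ of \eqref{eq: defn varphi(meps)}, whose Fr\'echet derivatives are
\[
\varphi'(m)[h] = -\lb \rho(1-|M|^2)M,\, h \rb_{\L^2},
\qquad
\varphi''(m)[h,h] = 2\int_{\R^d} \rho\, \lb M, h \rb^2 \dx - \int_{\R^d} \rho(1-|M|^2)\,|h|^2 \dx,
\]
with $M=m+e_3$. Substituting \eqref{eq: dm-epsilon-R} yields
\[
\varphi(m_\epsilon(t)) = \varphi(J_\epsilon m_0) + \int_0^t \la \mathcal{J}_1(s) + \tfrac{1}{2}\sum_k \mathcal{J}_{2,k}(s) + \tfrac{1}{2}\sum_k \mathcal{J}_{3,k}(s) \ra \ds + \mathcal{M}(t),
\]
where $\mathcal{J}_1 = \varphi'(m_\epsilon)[F^R_\epsilon(m_\epsilon)]$, $\mathcal{J}_{2,k} = \varphi'(m_\epsilon)[S_{k,\epsilon}(m_\epsilon)]$, $\mathcal{J}_{3,k} = \varphi''(m_\epsilon)[G_{k,\epsilon}(m_\epsilon), G_{k,\epsilon}(m_\epsilon)]$ and $\mathcal{M}$ is the $G_{k,\epsilon}$-driven martingale part.

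The drift term $\mathcal{J}_1$ is treated via the self-adjointness of $J_\epsilon$ together with the decomposition \eqref{eq: defn peps, qeps}: for any $f\in\L^2$ we may rewrite
\[
\lb \rho(1-|M_\epsilon|^2) M_\epsilon,\, J_\epsilon f \rb_{\L^2} = \lb \rho(1-|u_\epsilon+e_3|^2)(u_\epsilon+e_3),\, f \rb_{\L^2} + \lb p_\epsilon(m_\epsilon),\, f \rb_{\L^2}.
\]
Every piece of $\bar F(u_\epsilon+e_3)$ has the pointwise form $(u_\epsilon+e_3)\times X$, hence is orthogonal to $(u_\epsilon+e_3)$, so the first pairing vanishes identically and only the $p_\epsilon$-remainder survives, Cauchy--Schwarz controlling it by $|p_\epsilon|_{\L^2}$ times an $R$-bounded energy norm. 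The linear transport contribution $-J_\epsilon \nabla_v u_\epsilon$ is handled separately: after moving $J_\epsilon$ to the test function, the direct term collapses via the chain rule into $\tfrac14\int (1-|u_\epsilon+e_3|^2)^2\, v\cdot\nabla\rho \dx$ using $\mathrm{div}\, v=0$, which by \eqref{eq: rho^-1 D-D2} is dominated by $c|v|_{\L^\infty}\,|1-|u_\epsilon+e_3|^2|_{\L^2_\rho}^2$, and then replaced by $c\,\varphi(m_\epsilon)$ plus a $d_\epsilon(m_\epsilon)$-remainder through \eqref{eq: Beps part 1}; each residual falls within the templates defining $A_\epsilon$ in \eqref{eq: defn Aeps}.

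For the combined Stratonovich/It\^o correction $\tfrac{1}{2}\sum_k(\mathcal{J}_{2,k}+\mathcal{J}_{3,k})$ the identity $\lb M, (\nabla_{g_k})^2 M\rb = \tfrac{1}{2}(\nabla_{g_k})^2|M|^2 - |\nabla_{g_k} M|^2$ shows that the \emph{unmollified} version of the sum telescopes into $\tfrac14 \int \rho\, \la \psi\,(\nabla_{g_k})^2 \psi + (\nabla_{g_k}\psi)^2 \ra \dx$ with $\psi = 1-|M|^2$, which, after integrating by parts against the weight $\rho$ using \eqref{eq: rho^-1 D-D2}, is pathwise bounded by $c\sum_k|g_k|^2_{W^{2,\infty}(\R^d;\R^d)}\,\varphi(m_\epsilon)$. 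This is the familiar Stratonovich cancellation of the leading-order noise terms. The deviations from this unmollified identity come from the standard difference between convolutions of products and products of convolutions, and they organize themselves precisely into the remaining two pieces $|d_\epsilon(m_\epsilon)|_{\H^1_\rho}^2$ and $\sum_k |d_\epsilon(\nabla_{g_k} J_\epsilon m_\epsilon)|^2_{\L^2_\rho}$ of $A_\epsilon$, extracted by Cauchy--Schwarz in $\L^2_\rho$ against factors of $|1-|M_\epsilon|^2|_{\L^2_\rho}$.

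Taking expectation kills $\mathcal{M}$, and the Burkholder--Davis--Gundy inequality dominates $\E[\sup_{s\le t}|\mathcal{M}(s)|]$ by a multiple of $\E\bigl[\int_0^t \sum_k \lb \rho(1-|M_\epsilon|^2)M_\epsilon, G_{k,\epsilon}(m_\epsilon)\rb_{\L^2}^2 \ds\bigr]^{1/2}$; the same $J_\epsilon$-splitting and integration by parts give a bound of the form $c\la\E\int_0^t \varphi(m_\epsilon)^2 \ds\ra^{1/2} + c\la \E\int_0^t A_\epsilon \ds\ra^{1/2}$, and Young's inequality absorbs the $\varphi$-contribution on the left. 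Collecting everything yields
\[
\E[\varphi(m_\epsilon(t))] \le \varphi(J_\epsilon m_0) + c\int_0^t \E[\varphi(m_\epsilon(s))] \ds + c(R) \la \E\int_0^t A_\epsilon(m_\epsilon(s))\ds \ra^{1/2},
\]
after which Gronwall's inequality closes the argument. The main obstacle is bookkeeping: at every step each mollified quantity must be split into a direct-product piece that either vanishes by pointwise orthogonality $\lb (u_\epsilon+e_3)\times X,\, (u_\epsilon+e_3) \rb = 0$ or telescopes under integration by parts using $\mathrm{div}\, v = 0$, plus a remainder that matches exactly one of $p_\epsilon(m_\epsilon)$, $d_\epsilon(m_\epsilon)$ or $d_\epsilon(\nabla_{g_k} J_\epsilon m_\epsilon)$ recorded in \eqref{eq: defn Aeps}.
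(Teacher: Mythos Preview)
Your proposal is correct and follows essentially the same route as the paper: apply It\^o's formula to $\varphi(m_\epsilon)$, use self-adjointness of $J_\epsilon$ and the $p_\epsilon$-splitting \eqref{eq: defn peps, qeps} so that the $\bar F$-contribution vanishes by pointwise orthogonality, treat the $\nabla_v$-piece via \eqref{eq: <(1-|u|^2)u,Dfu>} and \eqref{eq: Beps part 1}, exhibit the Stratonovich/It\^o cancellation up to $d_\epsilon$-remainders, take expectation, and close with Gronwall. Two small remarks: the paper carries out the cancellation at the level of $u_\epsilon+e_3=J_\epsilon m_\epsilon+e_3$ (your ``unmollified'' identity is the same computation written with $\psi=1-|M|^2$ and is fine), and the Burkholder--Davis--Gundy step you include is superfluous here since the lemma only asks for $\E[\varphi(m_\epsilon(t))]$ at fixed $t$---it suffices, as the paper does, to check that the integrand of $\mathcal{M}$ lies in $L^2(\Omega\times[0,T])$ so that $\E[\mathcal{M}(t)]=0$.
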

	\begin{proof}	 
		Recall $ \varphi(m_\epsilon) $ from \eqref{eq: defn varphi(meps)}. 
		We have $ \varphi(m_\epsilon) \leq c (|m_\epsilon|_{\L^4}^4 + |m_\epsilon|_{\L^2}^2) $ and
		\begin{align*}
			\varphi'(m_\epsilon) u &= -\lb (1-|M_\epsilon|^2) M_\epsilon, u \rb_{\L^2_\rho}, \\
			\varphi''(m_\epsilon)(u,w) &= -\lb (1-|M_\epsilon|^2) u,w \rb_{\L^2_\rho}+ 2 \int_{\R^d} \lb M_\epsilon, u\rb \lb M_\epsilon, w \rb \rho(x) \dx.
		\end{align*}	
		By It{\^o}'s formula, 
		\begin{equation}\label{eq: |M|=1 pf Ito}
			\begin{aligned}
				\frac{1}{4}\d|1-|M_\epsilon(t)|^2|_{\L^2_\rho}^2
				&= -\lb (1-|M_\epsilon|^2) M_\epsilon, F^R_\epsilon(M_\epsilon) \rb_{\L^2_\rho} \dt \\
				&\quad - \frac{1}{2} \sum_k \lb (1-|M_\epsilon|^2) M_\epsilon, S_{k,\epsilon}(m_\epsilon) \rb_{\L^2_\rho} \dt \\
				&\quad - \frac{1}{2} \sum_k \lb (1-|M_\epsilon|^2) G_{k,\epsilon}(m_\epsilon),G_{k,\epsilon}(m_\epsilon) \rb_{\L^2_\rho} \dt \\
				&\quad + \sum_k \int_{\R^d} \lb M_\epsilon, G_{k,\epsilon}(m_\epsilon) \rb^2 \rho(x) \dx \dt \\
				&\quad - \sum_k \lb (1-|M_\epsilon|^2) M_\epsilon, G_{k,\epsilon}(m_\epsilon) \rb_{\L^2_\rho} \dW_k \\
				&=: {\rm U}_1 \dt - \frac{1}{2} \sum_k \la {\rm U}_{2,k} + {\rm U}_{3,k} - 2{\rm U}_{4,k} \ra \dt - \sum_k {\rm U}_{5,k} \dW_k.
			\end{aligned}
		\end{equation}
		
		For the diffusion term $ {\rm U}_{5,k} $, 	
		\begin{align*}
			{\rm U}_{5,k}
			&= -\lb (1-|M_\epsilon|^2) M_\epsilon, G_{k,\epsilon}(m_\epsilon) \rb_{\L^2_\rho} \\
			&\lesssim |1-|M_\epsilon|^2|_{\L^2_\rho} |M_\epsilon|_{\L^\infty} |\nabla m_\epsilon|_{\L^2_\rho} \\
			&\lesssim \la |m_\epsilon|_{\L^4}^4 + |m_\epsilon|_{\L^2}^2 \ra \la 1+ |m_\epsilon|_{\H^2} \ra |\nabla m_\epsilon|_{\L^2}.
			%&= \lb J_\epsilon \la (1-|M_\epsilon|^2) M_\epsilon \rho \ra, \nabla_{g_k} u_\epsilon \rb_{\L^2} \\
			%&= \lb (1- |u_\epsilon+e_3|^2) (u_\epsilon+e_3) \nabla_{g_k}(u_\epsilon+e_3) \rb_{\L^2_\rho} 
			%+ \lb p_\epsilon(m_\epsilon), \nabla_{g_k} u_\epsilon \rb_{\L^2} \\
			%&= -\frac{1}{2} \int_{\R^d} {\rm div} (g_k \rho) \la |u_\epsilon+e_3|^2 - \frac{1}{2}|u_\epsilon+e_3|^4 \ra \dx 
			%+ \lb p_\epsilon(m_\epsilon), \nabla_{g_k} u_\epsilon \rb_{\L^2}.
		\end{align*}
		Since $ m_\epsilon \in L^{2p}(\Omega;L^\infty(0,T;\H^2)) $ for $ p \in [1,\infty) $, $ {\rm U}_{5,k} \in L^{2p}(\Omega;L^2(0,T)) $. Then the It{\^o} integral is well-defined, thus $ \sum_k {\rm U}_{5,k} \dW_k $ has zero expectation. 
		
		For $ {\rm U}_1 $, we have
		\begin{align*}
			{\rm U}_1
			&= -\lb J_\epsilon \la (1-|M_\epsilon|^2) M_\epsilon \rho \ra, \psi^R_\epsilon \bar{F}(u_\epsilon+e_3) - \nabla_v u_\epsilon \rb_{\L^2} \\
			&= -\lb (1- |u_\epsilon+e_3|^2) (u_\epsilon+e_3), \psi^R_\epsilon \bar{F}(u_\epsilon+e_3) \rb_{\L^2_\rho} \\
			&\quad + \lb (1- |u_\epsilon+e_3|^2) (u_\epsilon+e_3), \nabla_v (u_\epsilon+e_3) \rb_{\L^2_\rho} \\
			&\quad - \lb p_\epsilon(m_\epsilon), \psi^R_\epsilon \bar{F}(u_\epsilon+e_3) - \nabla_v u_\epsilon \rb_{\L^2} \\
			&=: {\rm U}_{1a} + {\rm U}_{1b} + {\rm U}_{1c}, 
		\end{align*}
		where $ {\rm U}_{1a} $ is equal to $ 0 $ by the cross-product structure of $ \bar{F} $. 
		For $ {\rm U}_{1b} $, since $ {\rm div} (v) = 0 $ and $ v \rho $ vanishes at infinity, we have $ {\rm div}(v \rho) = \langle v, \nabla \rho \rangle $ and $ \int_{\R^d} {\rm div}(v \rho) \dx = 0 $. 
		Then by \eqref{eq: <(1-|u|^2)u,Dfu>}, 
		\begin{align*}
			{\rm U}_{1b}
			&= \frac{1}{4} \int_{\R^d} {\rm div}(v \rho) (1-|u_\epsilon+e_3|^2)^2 \dx - \frac{1}{4} \int_{\R^d} {\rm div} (v \rho) \dx \\
			&= \frac{1}{4} \int_{\R^d} \lb v, \rho^{-1} \nabla \rho \rb (1-|u_\epsilon+e_3|^2)^2 \rho \dx \\
			&\lesssim |v|_{\L^\infty} |\rho^{-1}\nabla \rho|_{\L^\infty} \left|1-|u_\epsilon+e_3|^2 \right|_{\L^2_{\rho}}^2, 
		\end{align*} 
		where $ |\rho^{-1} \nabla \rho|_{\L^\infty} = \frac{1}{2} $. 
		Thus, by \eqref{eq: Beps part 1} and the estimates in Lemma \ref{Lemma: meps H2-est}, for any $ t \in [0,T] $, 
		\begin{align*}
			\E \left[\int_0^t {\rm U}_1 \ds \right]
			&\lesssim 
			\E \left[ \int_0^t \left| 1-|M_\epsilon|^2 \right|_{\L^2_\rho}^2 \ds 
			+ \int_0^t (1+|m_\epsilon|_{\H^2}^3) |d_\epsilon(m_\epsilon)|_{\L^2_\rho} \ds \right] \\
			&\quad + \E \left[\int_0^t |p_\epsilon(m_\epsilon)|_{\L^2} |\psi^R_\epsilon \bar{F}(u_\epsilon+e_3) - \nabla_v u_\epsilon|_{\L^2} \ds \right] \\
			&\leq 
			c \E \left[ \int_0^t \left| 1-|M_\epsilon|^2 \right|_{\L^2_\rho}^2 \ds \right] 
			+ c(R) \ \E \left[ \int_0^t \la |d_\epsilon(m_\epsilon)|^2_{\L^2_\rho} + |p_\epsilon(m_\epsilon)|_{\L^2}^2 \ra \ds \right]^\frac{1}{2}. 
		\end{align*}
		
		For $ {\rm U}_{2,k} $,  
		\begin{align*}
			{\rm U}_{2,k}
			&= \lb J_\epsilon \la (1-|M_\epsilon|^2) M_\epsilon \rho \ra, S_k(u_\epsilon) \rb_{\L^2} \\
			&= \lb (1-|u_\epsilon+e_3|^2)(u_\epsilon+e_3), (\nabla_{g_k})^2 u_\epsilon \rb_{\L^2_\rho} 
			+ \lb p_\epsilon(m_\epsilon), (\nabla_{g_k})^2 u_\epsilon \rb_{\L^2} \\
			&=: {\rm U}_{2a,k} + {\rm U}_{2b,k},
		\end{align*}
		where by \eqref{eq: g in W2inf} and the estimates in Lemma \ref{Lemma: meps H2-est}, 
		\begin{align*}
			\E \left[ \sum_k \int_0^t |{\rm U}_{2b,k}| \ds \right] 
			&\leq \E\left[\sum_k |g_k|_{W^{1,\infty}(\R^d;\R^d)}^2 \int_0^t |p_\epsilon(m_\epsilon)|_{\L^2} |\nabla u_\epsilon|_{\H^1} \ds \right] \\
			&\leq c(R) \ \E\left[ \int_0^t |p_\epsilon(m_\epsilon)|_{\L^2}^2 \ds \right]^\frac{1}{2}. 
		\end{align*}
		For $ {\rm U}_{2a,k} $, integrating-by-parts,
		\begin{align*}
			{\rm U}_{2a,k}
			&= \lb (1-|u_\epsilon+e_3|^2) (u_\epsilon+e_3), (\nabla_{g_k})^2 u_\epsilon \rb_{\L^2_\rho} \\
			&= \lb (1-|u_\epsilon+e_3|^2) (u_\epsilon+e_3), \nabla_{\rho g_k} \nabla_{g_k} u_\epsilon \rb_{\L^2} \\
			&= -\int_{\R^d} (1-|u_\epsilon+e_3|^2) |\nabla_{g_k} u_\epsilon|^2 \rho \dx  
			+ 2\int_{\R^d} \lb u_\epsilon+e_3, \nabla_{g_k} u_\epsilon \rb^2 \rho \dx  \\
			&\quad - \lb {\rm div} (g_k \rho) (1-|u_\epsilon+e_3|^2)(u_\epsilon+e_3), \nabla_{g_k} (u_\epsilon+e_3) \rb_{\L^2} \\
			&=: -\widehat{{\rm U}}_{3,k} + 2\widehat{{\rm U}}_{4,k} + {\rm V}_{2a,k}, 
		\end{align*}
		where $ -\widehat{{\rm U}}_{3,k} + 2\widehat{{\rm U}}_{4,k} $ is in a similar form to $ -{\rm U}_{3,k} + 2{\rm U}_{4,k} $. 
		For the remainder $ {\rm V}_{2a,k} $, by \eqref{eq: <(1-|u|^2)u,Dfu>}, 
		\begin{align*}
			{\rm V}_{2a,k}
			&= - \frac{1}{4} \la \int_{\R^d} {\rm div}\la {\rm div}(g_k \rho) g_k \ra\rho^{-1}  \left| 1-|u_\epsilon+e_3|^2 \right|^2 \rho \dx - \int_{\R^d} {\rm div}\la {\rm div}(g_k \rho) g_k \ra \dx \ra, 
		\end{align*}
		where the fact $ \rho $ and $ \nabla \rho $ vanish at infinity implies $ \int_{\R^d} {\rm div}({\rm div}(g_k \rho) g_k) \dx = 0 $, 
		and by \eqref{eq: rho^-1 D-D2},
		\begin{align*}
			{\rm div}\la {\rm div}(g_k \rho) g_k \ra\rho^{-1}
			&= ({\rm div} g_k)^2 + \lb \nabla_{g_k} g_k + 2g_k {\rm div} g_k, \rho^{-1}\nabla \rho \rb + \nabla_{g_k} {\rm div} g_k + \lb g_k, \rho^{-1} \nabla_{g_k} \nabla \rho \rb \\
			&\lesssim |g_k|^2_{W^{2,\infty}(\R^d;\R^d)}.
		\end{align*}
		Then by \eqref{eq: g in W2inf} and \eqref{eq: Beps part 1}, we are left with
		\begin{align*}
			\E \left[ \int_0^t \sum_k |{\rm V}_{2a,k}| \ds \right]
			&\leq 
			c \E \left[ \int_0^t |1-|M_\epsilon(s)|^2|_{\L^2_\rho}^2 \ds \right] 
			+ c(R) \E \left[ \int_0^t |d_\epsilon(m_\epsilon)|^2_{\L^2_\rho} \ds \right]^\frac{1}{2}. 
		\end{align*}
		
		Now we estimate $ {\rm U}_{3,k} $ and $ {\rm U}_{4,k} $. 
		Note that $ G_{k,\epsilon}(m_\epsilon)-\nabla_{g_k} u_\epsilon = d_\epsilon(\nabla_{g_k} u_\epsilon) $. 
		We have
		\begin{align*}
			{\rm U}_{3,k} 
			&= \lb (1-|M_\epsilon|^2) G_{k,\epsilon}(m_\epsilon), G_{k,\epsilon}(m_\epsilon) \rb_{\L^2_\rho} \\
			&= \lb (1-|u_\epsilon + e_3|^2) \nabla_{g_k} u_\epsilon, \nabla_{g_k} u_\epsilon \rb_{\L^2_\rho} \\
			&\quad + \lb \lb d_\epsilon(m_\epsilon), m_\epsilon+ u_\epsilon+2e_3 \rb G_{k,\epsilon}(m_\epsilon), G_{k,\epsilon}(m_\epsilon) \rb_{\L^2_\rho} \\
			&\quad + \lb (1-|u_\epsilon + e_3|^2) d_\epsilon(\nabla_{g_k} u_\epsilon), G_{k,\epsilon}(m_\epsilon) + \nabla_{g_k} u_\epsilon \rb_{\L^2_\rho} \\
			&= \widehat{{\rm U}}_{3,k} + {\rm V}_{3,k},
		\end{align*}
		%where in $ {\rm V}_{3,k} $, 
		%\begin{align*}
		%	|u_\epsilon+e_3|^2 - |M_\epsilon|^2
		%	&= \lb d_\epsilon(m_\epsilon), m_\epsilon+ u_\epsilon+2e_3 \rb,
		%\end{align*}
		and thus
		\begin{align*}
			\E \left[\sum_k \int_0^t |{\rm V}_{3,k}| \ds \right] 
			&\lesssim 
			\E\left[\sum_k |g_k|_{L^\infty(\R^d;\R^d)}^2 \int_0^t |d_\epsilon(m_\epsilon)|_{\L^2_\rho} \la |m_\epsilon|_{\L^\infty}+1 \ra |\nabla m_\epsilon|_{\L^4}^2 \ds \right] \\
			&\quad + \E\left[ \sum_k |g_k|_{L^\infty(\R^d;\R^d)} \int_0^t |d_\epsilon(\nabla_{g_k} u_\epsilon)|_{\L^2_\rho} \la |m_\epsilon|_{\L^\infty}^2 + |m_\epsilon|_{\L^\infty} \ra |\nabla m_\epsilon|_{\L^2} \ds \right] \\
			&\leq c(R) \E \left[ \int_0^t \la |d_\epsilon(m_\epsilon)|^2_{\L^2_\rho} + \sum_k |d_\epsilon(\nabla_{g_k} u_\epsilon)|_{\L^2_\rho}^2 \ra \ds \right]^\frac{1}{2}.
		\end{align*}
		Similarly, 
		\begin{align*}
			%\lb M_\epsilon, G_{k,\epsilon}(m_\epsilon) \rb
			%&= \lb u_\epsilon+e_3, \nabla_{g_k} u_\epsilon \rb 
			%+ \la \lb u_\epsilon+e_3, d_\epsilon(\nabla_{g_k} u_\epsilon) \rb - \lb d_\epsilon(m_\epsilon), G_{k,\epsilon}(m_\epsilon) \rb \ra \\
			\lb M_\epsilon, G_{k,\epsilon}(m_\epsilon) \rb - \lb u_\epsilon+e_3, \nabla_{g_k} u_\epsilon \rb 
			&= \lb u_\epsilon+e_3, d_\epsilon(\nabla_{g_k} u_\epsilon) \rb - \lb d_\epsilon(m_\epsilon), G_{k,\epsilon}(m_\epsilon) \rb
			=: r_{4,k}.
		\end{align*}
		By \eqref{eq: |d(f)|_Lprho} and \eqref{eq: |Ju|_L2rho <= c|u|_L2rho}, 
		\begin{align*}
			|r_{4,k}|_{\L^2_\rho}^2
			&\lesssim (1+|m_\epsilon|_{\L^\infty})^2 |d_\epsilon(\nabla_{g_k} u_\epsilon)|_{\L^2_\rho}^2 + |g_k|^2_{L^\infty(\R^d;\R^d)} |\nabla m_\epsilon|_{\L^4}^2  |d_\epsilon(m_\epsilon) \rho^\frac{1}{4}|_{\L^4_\rho}^2 \\
			&\lesssim 
			\la |g_k|_{L^\infty(\R^d;\R^d)} (1+|m_\epsilon|_{\H^2}^3) + |g_k|^2_{L^\infty(\R^d;\R^d)} |m_\epsilon|_{\H^2}^3 \ra \la |d_\epsilon(m_\epsilon)|_{\L^2_\rho} + |d_\epsilon(\nabla_{g_k} u_\epsilon)|_{\L^2_\rho} \ra,
		\end{align*}
		Then for $ {\rm U}_{4,k} $, we have
		\begin{align*}
			{\rm U}_{4,k}
			&= \int_{\R^d} \lb M_\epsilon, G_{k,\epsilon}(m_\epsilon) \rb^2 \rho \dx \\
			&= \int_{\R^d} \lb u_\epsilon+e_3, \nabla_{g_k} u_\epsilon \rb^2 \rho \dx + \int_{\R^d} \la r_{4,k}^2 + 2 \lb u_\epsilon+e_3, \nabla_{g_k} u_\epsilon \rb r_{4,k} \ra \rho \dx \\
			&= \widehat{{\rm U}}_{4,k} + {\rm V}_{4,k}, 
		\end{align*}
		where 
		\begin{align*}
			\E \left[\sum_k \int_0^t |{\rm V}_{4,k}| \ds \right] 
			&\lesssim 
			\E\left[\sum_k \int_0^t |r_{4,k}|_{\L^2_\rho}^2 + 2 |g_k|_{\L^\infty(\R^d;\R^d)} (1+|m_\epsilon|_{\L^\infty}) |\nabla m_\epsilon|_{\L^2} |r_{4,k}|_{\L^2_\rho} \ds \right] \\
			&\leq c(R) \E \left[ \int_0^t \la |d_\epsilon(m_\epsilon)|^2_{\L^2_\rho} + |d_\epsilon(m_\epsilon) \rho^\frac{1}{4}|_{\L^4_\rho}^2 + \sum_k |d_\epsilon(\nabla_{g_k} u_\epsilon)|^2_{\L^2_\rho} \ra \ds \right]^\frac{1}{2}.
		\end{align*}
		Note that
		\begin{align*}
			|d_\epsilon(m_\epsilon) \rho^\frac{1}{4}|_{\L^4_\rho}^2
			&= |d_\epsilon(m_\epsilon) \rho^\frac{1}{2}|_{\L^4}^2 
			\lesssim |d_\epsilon(m_\epsilon) \rho^\frac{1}{2}|_{\H^1}^2
			= |d_\epsilon(m_\epsilon)|_{\H^1_\rho}^2.
		\end{align*}
		Hence, the main components of the Stratonovich and It{\^o} corrections cancel with each other, leaving 
		\begin{align*}
			-\E \left[ \sum_k \la {\rm U}_{2,k} + {\rm U}_{3,k} -2{\rm U}_{4,k} \ra \dt \right]
			&\leq 
			c\E \left[ \int_0^t |1-|M_\epsilon(s)|^2|_{\L^2_\rho}^2 \ds \right] \\
			&\quad + c(R) \E\left[ \int_0^t \la |d_\epsilon(m_\epsilon)|_{\H^1_\rho}^2 + \sum_k |d_\epsilon(\nabla_{g_k} u_\epsilon)|^2_{\L^2_\rho} + |p_\epsilon(m_\epsilon)|_{\L^2}^2 \ra {\rm d}s \right]^\frac{1}{2},  
		\end{align*}

		Overall, we obtain
		\begin{align*}
			\E \left[ |1-|M_\epsilon(t)|^2|_{\L^2_\rho}^2 \right] 
			%&\leq \E \left[|1-|J_\epsilon m_0+e_3|^2|_{\L^2_\rho}^2 + c\int_0^t |1-|M_\epsilon(s)|^2|_{\L^2_\rho}^2 \ds \right] \\
			%&\quad + c(R) \ \E\left[ \int_0^t \la |d_\epsilon(m_\epsilon)|_{\H^1_\rho}^2 + \sum_k |d_\epsilon(\nabla_{g_k} u_\epsilon)|^2_{\L^2_\rho} + |p_\epsilon(m_\epsilon)|_{\L^2}^2 \ra \ds \right]^\frac{1}{2} \\
			&\leq 
			|1-|J_\epsilon m_0+e_3|^2|_{\L^2_\rho}^2 + c \E \left[ \int_0^t |1-|M_\epsilon(s)|^2|_{\L^2_\rho}^2 \ds \right] \\
			&\quad + c(R) \E \left[ \int_0^t A_\epsilon(m_\epsilon) \ds \right]^\frac{1}{2}, 
		\end{align*}
		for any $ t \in [0,T] $. 
		Then we apply Gronwall's lemma to yield the desired result. 
	\end{proof}

\section{Convergences for fixed $ R $}\label{Section: convergence for fixed R}
	In this section, we fix $ R > 1 $, and employ Skorohod theorem and compactness argument to deduce convergences of (functions of) new approximations $ \tilde{m}_\epsilon $. 
	These convergences and subsequent regularity properties of the limit $ \tilde{m} $ allow us to show that $ \tilde{m}+e_3 $ is a martingale solution of \eqref{eq: sLLG with Delta^2} later in Section \ref{Section: Proof of Theorem}. 
	
	For Banach spaces $ B_0, B_1, B_2 $ and $ B_3 $ such that $ B_0, B_2 $ are reflexive and $ B_0 \Subset B_1 \hookrightarrow B_2 \Subset B_3 $, the following embeddings follow from \cite[Theorems 2.1 and 2.2]{FlandoliGatarek}:
	\begin{align*}
		L^p(0,T;B_0) \cap W^{\sigma,p}(0,T;B_2) &\Subset L^p(0,T;B_1), \\
		W^{\sigma,p}(0,T; B_2) &\Subset \mathcal{C}([0,T];B_3), 
	\end{align*}
	for $ p \in (1,\infty), \sigma \in (0,1) $ and $ \sigma p > 1 $. 
	We define
	\begin{align*}
		E_0 &:= L^\infty(0,T;\H^2) \cap W^{\sigma,p}(0,T;\L^2), \\
		E &:= L^2(0,T; \H^1_\rho) \cap \mathcal{C}([0,T]; \H^{-1}),
	\end{align*}
	for $ p \in [2,\infty) $, $ \sigma \in (0,\frac{1}{2}) $, $ \sigma- \frac{1}{p} < \frac{1}{2} $ and $ \sigma p >1 $. 
	Since $ \L^2 \hookrightarrow \L^2_\rho $, $ \H^2 \Subset \H^1_\rho \hookrightarrow \L^2_\rho $ and $ \H^1 \Subset \L^2_\rho = (\L^2_\rho)^* \Subset \H^{-1} $, we have 
	\begin{align*}
		E_0 
		\hookrightarrow 
		L^p(0,T;\H^2) \cap W^{\sigma,p}(0,T;\L^2_\rho) 
		\Subset
		E. 
	\end{align*}
	%Since $ \L^2 \hookrightarrow \L^2_{\rm loc} \cap \L^2_\rho $, $ \H^2 \Subset \H^1_{\rm loc} \hookrightarrow \L^2_{\rm loc} $ and $ \H^1 \Subset \L^2_\rho = (\L^2_\rho)^* \Subset \H^{-1} $, we have $ E_0 \Subset E $. 
	%Although only locally integrable spaces have been used for the deterministic model, the dual space of $ \L^2_{\rm loc} $ is consisted of functions in $ \L^2 $ with compact essential support which is not isomorphic to $ \L^2_{\rm loc} $, and thus not convenient for finding a space that $ W^{\alpha,p}(0,T;\L^2) $ is compactly embedded in. Hence, we introduce the weighted space here.  
	By Lemma \ref{Lemma: meps H2-est}, $ m_\epsilon $ takes values in $ E_0 $, $ \P $-a.s. for any $ \epsilon > 0 $.  
	Hence, the set of laws $ \{ \mathcal{L}(m_\epsilon) \} $ on $ E $ is tight, where for any $ \lambda > 0 $, 
	$ \{ |m_\epsilon|_{E_0} \leq \lambda \} $ is compact in $ E $ and 
	\begin{align*}
		\P \la |m_\epsilon|_{E_0} > \lambda \ra 
		\leq \lambda^{-2} \E \left[ |m_\epsilon|_{E_0}^2 \right] 
		\leq \lambda^{-2} c(R) 
		\to 0, \quad \text{ as } \lambda \to \infty.
	\end{align*}
	
	Since $ E $ is a separable metric space, by Skorohod theorem there exists a probability space $ (\tilde{\Omega}, \tilde{\mathcal{F}}, \tilde{\P}) $ and a sequence $ \{(\tilde{m}_\epsilon, \tilde{W}_\epsilon)\} $ of $ E \times \mathcal{C}([0,T];H^4(\R^d;\R^d)) $-valued random variables defined on $ (\tilde{\Omega}, \tilde{\mathcal{F}}, \tilde{\P}) $ such that 
	\begin{align*}
		\mathcal{L}((m_\epsilon, W)) = \mathcal{L}((\tilde{m}_\epsilon, \tilde{W}_\epsilon)) \quad \text{on } E \times \mathcal{C}([0,T];H^4(\R^d;\R^d)), \quad \forall \epsilon > 0,
	\end{align*}
	and there exists an $ E \times \mathcal{C}([0,T];H^4(\R^d;\R^d)) $-valued random variable $ (\tilde{m}, \tilde{W}) $ such that
	\begin{equation}\label{eq: meps to m, Weps to W ptw}
		\tilde{m}_\epsilon \to \tilde{m} \ \text{ in } E, \quad
		\tilde{W}_\epsilon \to \tilde{W} \ \text{ in } \mathcal{C}([0,T];H^4(\R^d;\R^d)), \quad \tilde{\P}\text{-a.s.}
	\end{equation}
	The space $ \mathcal{C}([0,T];\H^2) $ is separable and complete. 
	Then by Kuratowski's theorem, $ \{\tilde{m}_\epsilon\} $ have the same laws as $ \{m_\epsilon\} $ on $ \mathcal{C}([0,T];\H^2) $, which implies that the estimates in Lemma \ref{Lemma: meps H2-est} also hold for $ \{\tilde{m}_\epsilon \} $. 
	
	We can extend the definition of the $ \H^2 $-norm to $ \H^{-1} $ by setting $ |u|_{\H^2} = \infty $ for $ \smash{u \in \H^{-1} \setminus \H^2} $. Then using the pointwise convergence \eqref{eq: meps to m, Weps to W ptw} in $ \smash{\mathcal{C}([0,T];\H^{-1})} $ and the uniform integrability of $ \tilde{m}_\epsilon $ in $ \smash{L^{2p}(\tilde{\Omega}; \mathcal{C}([0,T];\H^2))} $, we deduce that 
	\begin{equation}\label{eq: m in Linft-H2}
		\E \left[ |\tilde{m}|_{L^\infty(0,T;\H^2)}^{2p} \right] < \infty, \quad \forall p \in [1,\infty).
	\end{equation}	
	As a result, by \eqref{eq: meps to m, Weps to W ptw}, \eqref{eq: |Ju|_L2rho <= c|u|_L2rho} and \eqref{eq: <Ju,w>_L2rho}, for $ p \in [1,\infty) $,   
	\begin{equation}\label{eq: meps to m}
		\begin{alignedat}{2}
			&\tilde{m}_\epsilon \to \tilde{m} \quad \text{ in } L^{2p}(\tilde{\Omega}; L^2(0,T;\H^1_{\rho})), \quad
			&&\Delta \tilde{m}_\epsilon \rightharpoonup \Delta \tilde{m} \quad \text{ in } L^{2p}(\tilde{\Omega}; L^2(0,T;\L^2_{\rho})), \\
			&J_\epsilon \tilde{m}_\epsilon \to \tilde{m} \quad \text{ in } L^{2p}(\tilde{\Omega}; L^2(0,T; \H^1_{\rho})), \quad
			&&\Delta J_\epsilon \tilde{m}_\epsilon \rightharpoonup \Delta \tilde{m} \quad \text{ in } L^{2p}(\tilde{\Omega}; L^2(0,T; \L^2_{\rho})).
		\end{alignedat}
	\end{equation} 
	By Gagliardo-Nirenberg's inequality,
	\begin{equation}\label{eq: |Jmeps-m| Lprho}
		\left|(\tilde{m}_\epsilon - \tilde{m}) \rho^{\frac{1}{2}} \right|_{\L^r}
		\lesssim \left|\tilde{m}_\epsilon - \tilde{m} \right|_{\L^2_\rho}^{1-\theta} \la |\tilde{m}_\epsilon|^\theta_{\H^2} + |\tilde{m}|^\theta_{\H^2} \ra, \quad r \in [2,\infty], 
	\end{equation}
	where $ \theta = \frac{d}{4}-\frac{d}{2r} \in [0,1) $ for $ d = 2,3 $. 
	The inequality \eqref{eq: |Jmeps-m| Lprho} still holds with $ J_\epsilon \tilde{m}_\epsilon $ in place of $ \tilde{m}_\epsilon $. 
	Then by the $ L^\infty(0,T;\H^2) $ estimates of $ \tilde{m}_\epsilon $ and $ \tilde{m} $, 
	\begin{equation}\label{eq: Jmeps to m in Lp, Linf}
		\begin{aligned}
			\tilde{m}_\epsilon \rho^\frac{1}{2} &\to \tilde{m} \rho^\frac{1}{2} \quad \text{ in } L^{2p}(\tilde{\Omega}; L^{q}(0,T;\L^r)) \\
			(J_\epsilon \tilde{m}_\epsilon) \rho^\frac{1}{2} &\to \tilde{m} \rho^\frac{1}{2} \quad \text{ in } L^{2p}(\tilde{\Omega}; L^{q}(0,T;\L^r)), 
		\end{aligned}
	\end{equation} 
	for $ p \in [1,\infty) $, $ q \leq \frac{2}{1-\theta} $ and $ r \in [2,\infty] $.

\subsection{Vector length of $ \tilde{m} $}\label{Section: vector length of m-tilde}
	As in Lemma \ref{Lemma: |meps+e3| <= remainder}, let $ \varphi(\tilde{m}) := |1-|\tilde{m}+e_3|^2|_{\L^2_\rho} \lesssim |\tilde{m}|_{\L^4}^2 + |\tilde{m}|_{\L^2}^2 \in L^{2p}(\tilde{\Omega}; L^\infty(0,T)) $. 
	Then
	\begin{equation}\label{eq: |tilde m| vs |meps|}
		\begin{aligned}
			&\tilde{\E} \left[ \int_0^T |1-|\tilde{m}(t)+e_3|^2|_{\L^2_\rho}^2 \dt \right] \\
			&\lesssim \tilde{\E} \left[ \int_0^T \la \left|1-|\tilde{m}_\epsilon(t)+e_3|^2 \right|_{\L^2_\rho}^2 + \left||\tilde{m}_\epsilon(t)+e_3|^2 - |\tilde{m}(t)+e_3|^2 \right|_{\L^2_\rho}^2 \ra \dt \right] \\
			&\lesssim \tilde{\E}\left[ \int_0^T \la \left|1-|\tilde{m}_\epsilon(t)+e_3|^2 \right|_{\L^2_\rho}^2 + |\tilde{m}_\epsilon(t) - \tilde{m}(t)|_{\L^4_\rho}^2 |\tilde{m}_\epsilon(t) + \tilde{m}(t)|_{\L^4_\rho}^2 + |\tilde{m}_\epsilon(t) - \tilde{m}(t)|_{\L^2_\rho}^2 \ra \dt \right] \\
			&\lesssim \tilde{\E}\left[ \int_0^T \left|1-|\tilde{m}_\epsilon(t)+e_3|^2 \right|_{\L^2_\rho}^2 \dt \right] 
			+ \tilde{\E} \left[ \sup_{t \in [0,T]} \la |\tilde{m}_\epsilon(t) + \tilde{m}(t)|_{\H^1_\rho}^2+1 \ra \int_0^T |\tilde{m}_\epsilon(t) - \tilde{m}(t)|_{\H^1_\rho}^2 \dt \right],
		\end{aligned}
	\end{equation}
	where the second expectation on the right-hand side converges to $ 0 $ by \eqref{eq: m in Linft-H2} and \eqref{eq: meps to m}, thus we only need to focus on the first expectation.  
	Recall the definitions \eqref{eq: defn peps, qeps} and \eqref{eq: defn Aeps}. 
	For any $ q \in [4,\infty) $, the following maps are measurable:
	\begin{align*}
		L^q(0,T;\H^2) \ni u &\mapsto |1-|u+e_3|^2|_{\L^2_\rho}^2 \in L^1(0,T), \\
		L^q(0,T;\H^2) \ni u &\mapsto A_\epsilon(u) \in L^1(0,T). 
	\end{align*}
	Then since $ \{ \tilde{m}_\epsilon \} $ has the same laws as $ \{ m_\epsilon \} $ on $ L^q(0,T;\H^2) $, it holds by Lemma \ref{Lemma: |meps+e3| <= remainder} that
	\begin{align*}
		\tilde{\E} \left[ \int_0^T |1-|\tilde{m}_\epsilon(t)+e_3|^2|_{\L^2_\rho}^2 \dt \right] 
		&= \E \left[ \int_0^T |1-|m_\epsilon(t)+e_3|^2|_{\L^2_\rho}^2 \dt \right] \\
		&\lesssim |1-|J_\epsilon m_0+e_3|^2|_{\L^2_\rho}^2 
		+ \tilde{\E} \left[ \int_0^t A_\epsilon(\tilde{m}_\epsilon(s)) \ds \right]^\frac{1}{2}.
	\end{align*} 
	By the assumption that $ m_0 $ is bounded in $ \H^2 $ and the approximation property of $ J_\epsilon $, 
	\begin{align*}
		|1-|J_\epsilon m_0 + e_3|^2|_{\L^2_\rho}^2
		&= \left| |m_0+e_3|^2 - |J_\epsilon m_0 + e_3|^2 \right|_{\L^2_\rho}^2 \\
		&\leq |J_\epsilon m_0 - m_0|_{\L^2_\rho}^2 |J_\epsilon m_0 + m_0 + 2e_3|_{\L^\infty}^2 \\
		&\lesssim \la 1+|m_0|_{\H^2}^2 \ra |J_\epsilon m_0 - m_0|_{\L^2}^2 \  
		\to 0, \quad \text{ as } \epsilon \to 0. 
	\end{align*}
	For the remainder $ A_\epsilon(\tilde{m}_\epsilon) $, we first observe that by \eqref{eq: meps to m}, 
	\begin{align*}
		d_\epsilon(\tilde{m}_\epsilon)
		&= (J_\epsilon \tilde{m}_\epsilon - \tilde{m}) + (\tilde{m}- \tilde{m}_\epsilon) 	
		\to 0 \quad \text{in } L^{2p}(\tilde{\Omega};L^2(0,T;\H^1_\rho)).
	\end{align*}
	Similarly, 
	\begin{align*}
		\sum_k |d_\epsilon(\nabla_{g_k} J_\epsilon \tilde{m}_\epsilon) |_{\L^2_\rho}^2
		&= \sum_k \left|J_\epsilon \nabla_{g_k} \la J_\epsilon \tilde{m}_\epsilon - \tilde{m} \ra
		+ \la J_\epsilon \nabla_{g_k} \tilde{m} - \nabla_{g_k} \tilde{m} \ra
		+ \nabla_{g_k} \la \tilde{m} - J_\epsilon \tilde{m}_\epsilon \ra \right|_{\L^2_\rho}^2 \\
		&\lesssim \sum_k |g_k|^2_{L^\infty(\R^d;\R^d)} |\nabla (J_\epsilon \tilde{m}_\epsilon - \tilde{m})|_{\L^2_\rho}^2
		+ \sum_k \left| J_\epsilon \nabla_{g_k}\tilde{m} - \nabla_{g_k} \tilde{m} \right|_{\L^2_\rho}^2,
	\end{align*}
	where the right-hand side converges to $ 0 $ in $ L^{2p}(\tilde{\Omega};L^1(0,T)) $. 
	%Similarly,  
	%\begin{align*}
	%	|\tilde{m}_\epsilon+e_3|^2 (\tilde{m}_\epsilon+e_3) - |\tilde{m}+e_3|^2 (\tilde{m}+e_3)
	%	&\leq |\tilde{m}_\epsilon - \tilde{m}| \ |\tilde{m}_\epsilon + \tilde{m} + 2e_3| \ |\tilde{m}_\epsilon +e_3| \\
	%	&\quad + |\tilde{m}+e_3|^2 \ |\tilde{m}_\epsilon-\tilde{m}|.
	%\end{align*}
	Also, by the $ L^\infty(0,T;\H^2) $-estimate in \eqref{eq: m in Linft-H2}, the continuous embedding $ \H^2 \hookrightarrow \L^\infty $ and Lemma \ref{Lemma: meps H2-est}, 
	\begin{align*}
		&\tilde{\E} \left[ \int_0^T \left| (1-|\tilde{m}_\epsilon+e_3|^2) (\tilde{m}_\epsilon+e_3) \rho - (1-|\tilde{m}+e_3|^2) (\tilde{m}+e_3) \rho \right|_{\L^2}^2 \dt \right] \\
		&\lesssim \tilde{\E} \left[ \int_0^T \la \left| \tilde{m}_\epsilon-\tilde{m} \right|_{\L^2_\rho}^2 + \left| \lb \tilde{m}_\epsilon - \tilde{m}, \tilde{m}_\epsilon + \tilde{m} + 2e_3 \rb (\tilde{m}_\epsilon +e_3) + |\tilde{m}+e_3|^2 (\tilde{m}_\epsilon-\tilde{m}) \right|_{\L^2_\rho}^2 \ra \dt \right] \\
		&\lesssim 
		\tilde{\E} \left[ \int_0^T \la 1+|\tilde{m}_\epsilon|^4_{\L^\infty} + |\tilde{m}|^4_{\L^\infty} \ra |\tilde{m}_\epsilon-\tilde{m}|_{\L^2_\rho}^2 \dt \right] 
		\to 0, \quad \text{as } \epsilon \to 0,
	\end{align*}
	which implies $ d_\epsilon((1-|\tilde{m}_\epsilon+e_3|^2)(\tilde{m}_\epsilon+e_3) \rho) \to 0 $ in $ L^{2p}(\tilde{\Omega}; L^2(0,T;\L^2)) $. 
	Thus for $ p_\epsilon $ given in \eqref{eq: defn peps, qeps}, 
	\begin{align*}
		\tilde{\E} \left[ \int_0^T |p_\epsilon(\tilde{m}_\epsilon)|_{\L^2}^2 \dt \right]
		&\lesssim \tilde{\E} \left[ \int_0^T \left|d_\epsilon((1-|\tilde{m}_\epsilon+e_3|^2)(\tilde{m}_\epsilon+e_3) \rho)\right|_{\L^2}^2 \dt \right] \\
		&\quad + \tilde{\E}\left[ \sup_{t \in [0,T]} \la 1+ |\tilde{m}_\epsilon(t)|_{\L^\infty}^4 \ra \int_0^T |d_\epsilon(\tilde{m}_\epsilon) \rho|_{\L^2}^2 \dt \right],
	\end{align*}
	where the right-hand side converges to $ 0 $ as $ \epsilon \to 0 $. 
	Then
	\begin{align*}
		\lim_{\epsilon \to 0} \tilde{\E} \left[ \int_0^T A_\epsilon(\tilde{m}_\epsilon) \dt \right]
		&= \lim_{\epsilon \to 0} \tilde{\E} \left[ \int_0^T \la |d_\epsilon(\tilde{m}_\epsilon)|_{\H^1_\rho}^2 + \sum_k |d_\epsilon(\nabla_{g_k} J_\epsilon \tilde{m}_\epsilon) |_{\L^2_\rho}^2 + |p_\epsilon(\tilde{m}_\epsilon)|_{\L^2}^2 \ra \dt \right] 
		= 0. 
	\end{align*}
	
	Therefore, the right-hand side of \eqref{eq: |tilde m| vs |meps|} converges to $ 0 $ as $ \epsilon \to 0 $, yielding
	\begin{align*}
		\E \left[ \int_0^T \left| 1-|\tilde{m}(t)+e_3|^2 \right|_{\L^2_\rho}^2 \dt \right] \leq 0,
	\end{align*}
	where $ \rho>0 $. 
	In other words, 
	\begin{equation}\label{eq: |tilde m + e3|=1}
		|\tilde{m}(t,x)+e_3|=1, \quad \text{a.e.-}(t,x) \in [0,T] \times \R^d, \ \tilde{\P}\text{-a.s.}
	\end{equation}

\subsection{Convergence of cut-off function}\label{Section: convergence of cut-off}	 	
	Recall \eqref{eq: grad psi}. 
	We define $ \tilde{\psi}^{R,(i)}_\epsilon := \psi_R^{(i)}(|J_\epsilon \tilde{m}_\epsilon+e_3|^2) $ and $ \tilde{\psi}^{R,(i)} := \psi_R^{(i)}(|\tilde{m}+e_3|^2) $, for any non-negative integer $ i $. 
	We simply write \smash{$ \tilde{\psi}^R_\epsilon $} and \smash{$ \tilde{\psi}^R $} when $ i=0 $. 
	In particular, with $ R > 1 $, by \eqref{eq: |tilde m + e3|=1}, 
	\begin{equation}\label{eq: psi^i = 0 for i>=1}
		\tilde{\psi}^{R,(i)}(t,x) 
		= \psi_R^{(i)}(|\tilde{m}(t,x)+e_3|) 
		= \psi_R^{(i)}(1) = 0, \quad \forall i \geq 1,
	\end{equation}
	and thus $ \nabla \tilde{\psi}^R(t,x) = \Delta \tilde{\psi}^R(t,x) = 0 $ for a.e.-$ (t,x) \in [0,T] \times \R^d $, $ \tilde{\P} $-a.s. 
	
	Now we derive some convergence results for $ \tilde{\psi}^R_\epsilon $. 
	By the mean value theorem, we have
	\begin{align*}
		\left|(\tilde{\psi}^{R,(i)}_\epsilon - \tilde{\psi}^{R,(i)}) \rho^\frac{1}{2} \right|_{\L^r}
		&\lesssim \left||J_\epsilon \tilde{m}_\epsilon + e_3|^2 \rho^\frac{1}{2} - |\tilde{m}+e_3|^2 \rho^\frac{1}{2} \right|_{\L^r} \\
		&\lesssim \left|(J_\epsilon \tilde{m}_\epsilon -\tilde{m})\rho^\frac{1}{2} \right|_{\L^2}^{1-\theta} \left|J_\epsilon \tilde{m}_\epsilon - m \right|_{\H^2}^\theta |J_\epsilon \tilde{m}_\epsilon + \tilde{m} + 2 e_3|_{\L^\infty} \\
		&\lesssim |J_\epsilon \tilde{m}_\epsilon -\tilde{m}|_{\L^2_\rho}^{1-\theta} \la |\tilde{m}_\epsilon|_{\H^2}^{1+\theta} + |\tilde{m}|_{\H^2}^{1+\theta} + 1 \ra,
	\end{align*}
	for any non-negative integer $ i $, where $ \theta = \frac{d}{4}-\frac{d}{2r} $ for $ r \in [2,\infty] $. 
	Thus, by \eqref{eq: meps to m}, 
	\begin{equation}\label{eq: psi^i conv Lp, Linf}
		\tilde{\psi}^{R,(i)}_\epsilon \rho^\frac{1}{2} \to \tilde{\psi}^{R,(i)} \rho^\frac{1}{2} \ \text{ in }  L^{2p}(\tilde{\Omega};L^q(0,T;\L^r)), 
	\end{equation}
	for $ p \in [1,\infty) $, $ q \leq \frac{2}{1-\theta} $ and $ r \in [2,\infty] $ as in \eqref{eq: Jmeps to m in Lp, Linf}. 
	In particular, $ \tilde{\psi}^R_\epsilon \to \tilde{\psi}^R $ in $ L^{2p}(\tilde{\Omega}; L^2(0,T;\L^2_\rho)) $. 
	For the gradient of $ \tilde{\psi}^R_\epsilon $, by \eqref{eq: grad psi} and \eqref{eq: psi^i = 0 for i>=1}, 
	\begin{align*}
		\left|\nabla \la \tilde{\psi}^R_\epsilon - \tilde{\psi}^R \ra \right|_{\L^2_\rho} 
		&\lesssim 
		\left|(\tilde{\psi}^{R,(1)}_\epsilon - \tilde{\psi}^{R,(1)}) \rho^\frac{1}{2} \right|_{\L^\infty} |\lb J_\epsilon \tilde{m}_\epsilon+e_3, \nabla J_\epsilon \tilde{m}_\epsilon \rb|_{\L^2} \\
		&\quad + |\tilde{\psi}^{R,(1)}|_{\L^\infty} \left| \lb J_\epsilon \tilde{m}_\epsilon+e_3, \nabla J_\epsilon \tilde{m}_\epsilon \rb - \lb \tilde{m}+e_3, \nabla \tilde{m} \rb \right|_{\L^2_\rho} \\
		&\leq 
		\left|(\tilde{\psi}^{R,(1)}_\epsilon - \tilde{\psi}^{R,(1)}) \rho^\frac{1}{2} \right|_{\L^\infty} \la |\tilde{m}_\epsilon|_{\L^\infty} +1 \ra |\nabla \tilde{m}_\epsilon|_{\L^2} \\
		&\lesssim 
		\left|(\tilde{\psi}^{R,(1)}_\epsilon - \tilde{\psi}^{R,(1)}) \rho^\frac{1}{2} \right|_{\L^\infty} \la |\nabla \tilde{m}_\epsilon|_{\H^1}^2 + |\nabla \tilde{m}_\epsilon|_{\L^2} \ra, 
	\end{align*}
	where the right-hand side converges to $ 0 $ in $ L^{2p}(\tilde{\Omega};L^2(0,T)) $ since $ \tilde{m}_\epsilon \in L^p(\tilde{\Omega};L^\infty(0,T;\H^2)) $ for all $ \epsilon>0 $ and \eqref{eq: psi^i conv Lp, Linf} holds for $ i=1 $. 
	Similarly, for the Laplacian of $ \tilde{\psi}^R_\epsilon $, 
	\begin{align*}
		|\Delta (\tilde{\psi}^R_\epsilon - \tilde{\psi}^R)|_{\L^2_\rho}
		&\lesssim 
		\left|(\tilde{\psi}^{R,(1)}_\epsilon - \tilde{\psi}^{R,(1)}) \rho^\frac{1}{2} \right|_{\L^\infty} \left| |\nabla J_\epsilon \tilde{m}_\epsilon|^2 + \lb J_\epsilon \tilde{m}_\epsilon + e_3, \Delta J_\epsilon \tilde{m}_\epsilon \rb \right|_{\L^2} \\
		&\quad + |\tilde{\psi}^{R,(1)}|_{\L^\infty} \left| |\nabla J_\epsilon \tilde{m}_\epsilon|^2 - |\nabla \tilde{m}|^2 + \lb J_\epsilon \tilde{m}_\epsilon + e_3, \Delta J_\epsilon \tilde{m}_\epsilon \rb - \lb \tilde{m}+e_3, \Delta \tilde{m} \rb \right|_{\L^2_\rho} \\
		&\quad + \left|(\tilde{\psi}^{R,(2)}_\epsilon - \tilde{\psi}^{R,(2)}) \rho^\frac{1}{2} \right|_{\L^\infty} \left| \lb J_\epsilon \tilde{m}_\epsilon+e_3, \nabla J_\epsilon \tilde{m}_\epsilon \rb^2 \right|_{\L^2} \\
		&\quad + |\tilde{\psi}^{R,(2)}|_{\L^\infty} \left| \lb J_\epsilon \tilde{m}_\epsilon+e_3, \nabla J_\epsilon \tilde{m}_\epsilon \rb^2 - \lb \tilde{m}+e_3, \nabla \tilde{m} \rb^2 \right|_{\L^2_\rho} \\
		&\lesssim 
		\left|(\tilde{\psi}^{R,(1)}_\epsilon - \tilde{\psi}^{R,(1)}) \rho^\frac{1}{2} \right|_{\L^\infty} \la |\nabla \tilde{m}_\epsilon|^2_{\H^1} + (|\tilde{m}_\epsilon|_{\L^\infty}+1) |\Delta \tilde{m}_\epsilon|_{\L^2} \ra \\
		&\quad + \left|(\tilde{\psi}^{R,(2)}_\epsilon - \tilde{\psi}^{R,(2)}) \rho^\frac{1}{2} \right|_{\L^\infty} \la |\tilde{m}_\epsilon|_{\L^\infty}+1 \ra^2 |\nabla \tilde{m}_\epsilon|_{\H^1}^2,
	\end{align*} 
	where the right-hand side converges to $ 0 $ in $ L^{2p}(\tilde{\Omega};L^2(0,T)) $. 
	Hence, despite the fact that we only have weak convergence for $ \Delta \tilde{m}_\epsilon $ in $ \L^2_\rho $, we obtain strong convergence for the cut-off function in $ \H^2 $:
	\begin{equation}\label{eq: psi-eps conv H2loc}
		\tilde{\psi}^R_\epsilon \rho^\frac{1}{2} \to \tilde{\psi}^R \rho^\frac{1}{2} \ \text{ in } L^{2p}(\tilde{\Omega};L^2(0,T;\H^2)),
	\end{equation}

\subsection{Convergence of drift and diffusion coefficients}\label{Section: convergence of drift and diffusion coefficients}
	Using results in previous sections, we prove in Lemma \ref{Lemma: conv Feps,Seps,Geps} weak convergences for the drift coefficients $ F^R_\epsilon $ and $ S_{k,\epsilon} $, and strong convergence for the diffusion coefficient $ G_{k,\epsilon} $. 
	\begin{lemma}\label{Lemma: conv Feps,Seps,Geps}
		For $ p \in [1,\infty) $ and $ \varphi \in L^{2p}(\tilde{\Omega}; L^4(0,T; \mathcal{C}^\infty_c(\R^d))) $, 
		\begin{align*}
			\lim_{\epsilon \to 0} \tilde{\E} \left[ \la \int_0^T \lb F^R_\epsilon(\tilde{m}_\epsilon) - F(\tilde{m}+e_3), \varphi \rb_{\L^2_\rho}(t) \dt \ra^p \right] &= 0, \\
			\lim_{\epsilon \to 0} \tilde{\E} \left[ \la \int_0^T \sum_k \lb S_{k,\epsilon}(\tilde{m}_\epsilon) - S_k(\tilde{m}), \varphi \rb_{\L^2_\rho}(t) \dt \ra^p \right] &= 0,
		\end{align*}
		and
		\begin{align*}
			\lim_{\epsilon \to 0} \tilde{\E} \left[ \la \int_0^T \sum_k |G_{k,\epsilon}(\tilde{m}_\epsilon) - G_k(\tilde{m})|_{\L^2_\rho}^2(t) \dt \ra^p \right] = 0. 
		\end{align*}
	\end{lemma}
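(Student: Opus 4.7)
I would establish the three convergences separately, in order of increasing difficulty, relying throughout on the strong convergences in \eqref{eq: meps to m} and \eqref{eq: Jmeps to m in Lp, Linf}, the weak convergence of $\Delta J_\epsilon\tilde m_\epsilon$ in $L^{2p}(\tilde\Omega;L^2(0,T;\L^2_\rho))$, the strong convergence of the cut-off in \eqref{eq: psi-eps conv H2loc}, and the identity $\tilde\psi^R=1$ coming from $|\tilde m+e_3|=1\le R$, a.e., $\tilde{\mathbb P}$-a.s.

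For the diffusion coefficient, I decompose
\begin{align*}
G_{k,\epsilon}(\tilde m_\epsilon)-G_k(\tilde m)
= -J_\epsilon\nabla_{g_k}(J_\epsilon\tilde m_\epsilon-\tilde m)-d_\epsilon(\nabla_{g_k}\tilde m),
\end{align*}
bound the first piece by $|g_k|_{W^{1,\infty}}\,|J_\epsilon\tilde m_\epsilon-\tilde m|_{\H^1_\rho}$ using boundedness of $J_\epsilon$ on $\L^2_\rho$, and observe that the second piece is a mollifier remainder applied to $\nabla_{g_k}\tilde m$, which is controlled in $\L^2_\rho$ uniformly in $k$ via \eqref{eq: g in W2inf}. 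Summing in $k$ and integrating in $(t,\omega)$ gives the claim by dominated convergence together with \eqref{eq: meps to m} and \eqref{eq: m in Linft-H2}.

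For $S_{k,\epsilon}$, I test against $\varphi\rho$ and integrate by parts twice, transferring both $g_k$-directional derivatives onto the smooth, compactly supported factor $\varphi\rho$; the resulting expression is linear in $J_\epsilon\tilde m_\epsilon$ with coefficients controlled by $|g_k|_{W^{2,\infty}}$ and $|\rho^{-1}\nabla\rho|_{\L^\infty}$, so passage to the limit reduces to the strong convergence $J_\epsilon\tilde m_\epsilon\to\tilde m$ in $L^{2p}(\tilde\Omega;L^2(0,T;\H^1_\rho))$, with summability in $k$ secured by \eqref{eq: g in W2inf}.

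The main work is the drift $F^R_\epsilon$. I split $\bar F$ into the $\Delta u$-terms (and the $he_3$, $\nabla_v$ lower-order terms) on one hand and the $\Delta^2 u$-terms on the other. The former are handled by pairing weak convergence of $\Delta J_\epsilon\tilde m_\epsilon$ in $\L^2_\rho$ against the strongly convergent factors $(J_\epsilon\tilde m_\epsilon+e_3)$ and $\tilde\psi^R_\epsilon J_\epsilon(\varphi\rho)$ through \eqref{eq: Jmeps to m in Lp, Linf} and \eqref{eq: psi^i conv Lp, Linf}. For the bi-Laplacian terms, I invoke \eqref{eq: cross Delta^2 weak form}: after moving $J_\epsilon$ onto the test function and absorbing $\psi_R^\epsilon\rho$ into it, the pairings become inner products of $\Delta J_\epsilon\tilde m_\epsilon$ against quantities of the form $\Delta\tilde\varphi_\epsilon\times w_\epsilon$, $\tilde\varphi_\epsilon\times\Delta w_\epsilon$ and $\nabla\tilde\varphi_\epsilon\times\nabla w_\epsilon$, where $w_\epsilon=J_\epsilon\tilde m_\epsilon+e_3$ and $\tilde\varphi_\epsilon=\tilde\psi^R_\epsilon J_\epsilon(\varphi\rho)$. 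Here \eqref{eq: psi-eps conv H2loc} plus strong convergence of $w_\epsilon$ in $L^q(0,T;\L^r)$ supply the strong $\H^2$-type convergence of $\tilde\varphi_\epsilon$ (the $\rho^{1/2}$-weight is harmlessly absorbed because $\varphi$ has compact support), which together with the weak $\L^2_\rho$-convergence of $\Delta J_\epsilon\tilde m_\epsilon$ allows passage to the limit by standard weak--strong arguments. Since $\tilde\psi^R\equiv 1$, the cut-off disappears and one recovers precisely the weak form of $(\tilde m+e_3)\times\Delta^2\tilde m$.

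The principal obstacle is the double cross product $\alpha u_\epsilon\times(u_\epsilon\times\Delta^2 u_\epsilon)$: three coupled factors must be passed to the limit while only $\Delta u_\epsilon$ is weakly (not strongly) convergent, and the weighted-space bookkeeping must be done carefully so that each product of convergences lives in a space where weak--strong pairing is legitimate. The interpolation \eqref{eq: |Jmeps-m| Lprho} and the $\H^2$-uniform bound \eqref{eq: m in Linft-H2} provide the requisite integrability; once the iterated weak form in \eqref{eq: cross Delta^2 weak form} is used to reduce the order, the argument reduces to standard dominated-convergence estimates, which I would carry out term by term.
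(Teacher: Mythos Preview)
Your treatment of $G_{k,\epsilon}$ and $S_{k,\epsilon}$ is essentially correct and matches the paper (the paper integrates by parts once rather than twice for $S_k$, but this is immaterial). The gap is in the drift term, specifically the damping contribution $\alpha\,(u_\epsilon+e_3)\times\bigl((u_\epsilon+e_3)\times\Delta^2 u_\epsilon\bigr)$.

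If you absorb $\tilde\psi^R_\epsilon$ into the test function and expand via the second identity in \eqref{eq: cross Delta^2 weak form} with $w=u_\epsilon+e_3$, the very first summand is
\[
\bigl\langle (u_\epsilon+e_3)\times\Delta u_\epsilon,\ \tilde\varphi_\epsilon\times\Delta u_\epsilon\bigr\rangle_{\L^2}
=\int\Bigl(\langle u_\epsilon+e_3,\tilde\varphi_\epsilon\rangle\,|\Delta u_\epsilon|^2-\langle u_\epsilon+e_3,\Delta u_\epsilon\rangle\,\langle\tilde\varphi_\epsilon,\Delta u_\epsilon\rangle\Bigr)\dx,
\]
which is genuinely \emph{quadratic} in $\Delta u_\epsilon$ and does not vanish by antisymmetry (unlike the analogous term for the single cross product). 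Since you only have weak convergence of $\Delta u_\epsilon$ in $L^{2p}(\tilde\Omega;L^2(0,T;\L^2_\rho))$, neither a weak--strong pairing nor dominated convergence applies here; the $\H^2$-uniform bound and the interpolation \eqref{eq: |Jmeps-m| Lprho} give integrability but not convergence of this term.

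The paper circumvents this by a different decomposition. It first splits off the cut-off error $(\tilde\psi^R_\epsilon-\tilde\psi^R)\varphi\rho$ and the mollifier error $d_\epsilon(\varphi\rho)$, and then telescopes $\bar F(u_\epsilon+e_3)-\bar F(\tilde m+e_3)=\sum_{i=1}^7 {\rm H}_i$ so that every summand carries a factor $u_\epsilon-\tilde m$ that vanishes strongly in $\H^1_\rho$. The decisive extra ingredient, absent from your plan, is the \emph{dissipation bound} from Lemma~\ref{Lemma: meps H2-est},
\[
\sup_{\epsilon>0}\ \tilde{\E}\Bigl[\Bigl(\int_0^T\bigl|(\tilde\psi^R_\epsilon)^{1/2}(u_\epsilon+e_3)\times\Delta^2 u_\epsilon\bigr|_{\L^2}^2\dt\Bigr)^p\Bigr]<\infty,
\]
which lets the paper pair the bi-Laplacian cross product directly in $\L^2$ against the strongly vanishing $(u_\epsilon-\tilde m)$ (see the estimate of ${\rm H}_{5b}$). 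Without invoking this bound---or equivalently, without first identifying the weak $L^2$-limits of $\tilde\psi^R_\epsilon(u_\epsilon+e_3)\times\Delta^2 u_\epsilon$ and of the double cross product, as the paper does in step (iii)---the quadratic term above cannot be handled.
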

	\begin{proof}
		Let $ \tilde{u}_\epsilon := J_\epsilon \tilde{m}_\epsilon $. 
		We have
		\begin{align*}
			\lb F^R_\epsilon(\tilde{m}_\epsilon) - F(\tilde{m}+e_3), \varphi \rb_{\L^2_\rho} 
			&= \lb J_\epsilon \la \tilde{\psi}_\epsilon^R \bar{F}(\tilde{u}_\epsilon+e_3) \ra - \bar{F}(\tilde{m}+e_3) - (J_\epsilon \nabla_v \tilde{u}_\epsilon - \nabla_v \tilde{m}), \varphi \rho \rb_{\L^2}
		\end{align*}
		where $ \tilde{\psi}^R = 1 $, a.e.-$ (t,x) \in [0,T] \times \R^d $, $ \tilde{\P} $-a.s. and
		\begin{align*}
			\lb J_\epsilon \la \tilde{\psi}_\epsilon^R \bar{F}(\tilde{u}_\epsilon+e_3) \ra - \bar{F}(\tilde{m}+e_3), \varphi \rho \rb_{\L^2} 
			&= 
			\lb \bar{F}(\tilde{u}_\epsilon+e_3), \la \tilde{\psi}_\epsilon^R - \tilde{\psi}^R \ra \varphi \rho \rb_{\L^2} \\
			&\quad + \lb \bar{F}(\tilde{u}_\epsilon+e_3) - \bar{F}(\tilde{m}+e_3), \varphi \rho \rb_{\L^2} \\
			&\quad + \lb \tilde{\psi}_\epsilon^R \bar{F}(\tilde{u}_\epsilon+e_3) , d_\epsilon(\varphi \rho) \rb_{\L^2} \\
			&=: 
			{\rm I}_{0,\bar{F}} + {\rm I}_{1,\bar{F}} + {\rm I}_{2,\bar{F}}.
		\end{align*} 
		Similarly, we define $ \{ {\rm I}_{1,\nabla_v}, {\rm I}_{2,\nabla_v}\}, \{ {\rm I}_{1,S_k}, {\rm I}_{2,S_k} \} $ with $ \nabla_v, S_k $ in place of $ \bar{F} $ and $ 1 $ in place of $ \tilde{\psi}_\epsilon^R $, for $ k \geq 1 $.

		{\bf Convergence of $ F_\epsilon^R $.} 
		\begin{enumerate}[label=(\roman*), wide, labelwidth=!, labelindent=0pt]
			\item 
			For $ {\rm I}_{0,\bar{F}} $, 
			let $ f_\epsilon = \la \tilde{\psi}_\epsilon^R - \tilde{\psi}^R \ra \varphi \rho $.   
			Thus, 
			\begin{align*}
				\lb \bar{F}(\tilde{u}_\epsilon+e_3), f_\epsilon \rb_{\L^2} 
				&= 
				\lb \Delta \tilde{u}_\epsilon + \alpha (\tilde{u}_\epsilon+e_3) \times \Delta \tilde{u}_\epsilon, f_\epsilon \times (\tilde{u}_\epsilon+e_3) \rb_{\L^2} \\
				&\quad - h \lb e_3 + \alpha \tilde{u}_\epsilon \times e_3,  f_\epsilon \times (\tilde{u}_\epsilon+e_3) \rb_{\L^2} \\
				&\quad + \lb (\tilde{u}_\epsilon+e_3) \times \Delta \tilde{u}_\epsilon, \Delta f_\epsilon \rb_{\L^2} + 2 \lb \Delta \tilde{u}_\epsilon, \nabla f_\epsilon \times \nabla \tilde{u}_\epsilon \rb_{\L^2} \\
				&\quad + \alpha \lb (\tilde{u}_\epsilon+e_3) \times \Delta \tilde{u}_\epsilon, f_\epsilon \times \Delta \tilde{u}_\epsilon + \Delta f_\epsilon \times (\tilde{u}_\epsilon+e_3) + 2 \nabla f_\epsilon \times \nabla \tilde{u}_\epsilon \rb_{\L^2} \\
				&\quad + 2 \alpha \lb \nabla \tilde{u}_\epsilon \times \Delta \tilde{u}_\epsilon, \nabla f_\epsilon \times (\tilde{u}_\epsilon+e_3) + f_\epsilon \times \nabla \tilde{u}_\epsilon \rb_{\L^2} \\
				&\quad - \gamma \lb (\tilde{u}_\epsilon+e_3) \times \nabla_v u, f_\epsilon \rb_{\L^2} \\
				&\leq 
				\la |\tilde{u}_\epsilon+e_3|_{\L^\infty} + \alpha |\tilde{u}_\epsilon+e_3|_{\L^\infty}^2 \ra |\Delta \tilde{u}_\epsilon|_{\L^2} |f_\epsilon|_{\L^2} \\
				&\quad + h \la |\tilde{u}_\epsilon|_{\L^2} + \alpha |\tilde{u}_\epsilon|_{\L^4}^2 \ra |f_\epsilon|_{\L^2} \\
				&\quad + |\tilde{u}_\epsilon+e_3|_{\L^\infty} |\Delta m_\epsilon|_{\L^2} |\Delta f_\epsilon|_{\L^2} 
				+ 2 |\Delta \tilde{u}_\epsilon|_{\L^2} |\nabla \tilde{u}_\epsilon|_{\L^4} |\nabla f_\epsilon|_{\L^4} \\
				&\quad + \alpha |\tilde{u}_\epsilon+e_3|_{\L^\infty} |\Delta \tilde{u}_\epsilon|_{\L^2}^2 |f_\epsilon|_{\L^\infty} 
				+ \alpha |\tilde{u}_\epsilon+e_3|_{\L^\infty}^2 |\Delta \tilde{u}_\epsilon|_{\L^2} |\Delta f_\epsilon|_{\L^2} \\
				&\quad + 4 \alpha |\tilde{u}_\epsilon+e_3|_{\L^\infty} |\Delta \tilde{u}_\epsilon|_{\L^2} |\nabla \tilde{u}_\epsilon|_{\L^4} |\nabla f_\epsilon|_{\L^4} 
				+ 2 \alpha |\Delta \tilde{u}_\epsilon|_{\L^2} |\nabla \tilde{u}_\epsilon|_{\L^4}^2 |f_\epsilon|_{\L^\infty} \\
				&\quad + |\gamma v|_{L^\infty(\R^d;\R^d)} |\tilde{u}_\epsilon+e_3|_{\L^\infty} |\nabla \tilde{u}_\epsilon|_{\L^2} |f_\epsilon|_{\L^2} \\
				&\lesssim 
				\la 1+|\tilde{m}_\epsilon|_{\H^2} + |\tilde{m}_\epsilon|_{\H^2}^2 + |\tilde{m}_\epsilon|_{\H^2}^3 \ra |f_\epsilon|_{\H^2} \\
				&\lesssim 
				(1+|\tilde{m}_\epsilon|_{\H^2}^3) |f_\epsilon|_{\H^2}.
			\end{align*}
			Then by \eqref{eq: psi^i conv Lp, Linf} with $ i=0 $, \eqref{eq: psi-eps conv H2loc}, and that $ \varphi \in L^{2p}(\tilde{\Omega};L^2(0,T;\H^2)) $, 
			\begin{align*}
				|f_\epsilon|_{\H^2}
				&= \left|\la \tilde{\psi}_\epsilon^R - \tilde{\psi}^R \ra \varphi \rho \right|_{\H^2} \\
				&\lesssim \left|(\tilde{\psi}_\epsilon^R - \tilde{\psi}^R) \rho \right|_{\H^2} \left|\varphi \right|_{\L^\infty} 
				+ \left|(\tilde{\psi}_\epsilon^R - \tilde{\psi}^R) \rho \right|_{\L^\infty} \left|\nabla \varphi \right|_{\H^1} 
				+ \left|\nabla \la (\tilde{\psi}_\epsilon^R - \tilde{\psi}^R) \rho \ra \right|_{\L^4} \left|\nabla \varphi \right|_{\L^4} \\
				&\to 0 \quad \text{ in } L^{2p}(\tilde{\Omega};L^1(0,T)). 
			\end{align*}
			This implies
			\begin{align*}
				\tilde{\E} \left[ \left| \int_0^T {\rm I}_{0,\bar{F}} \dt \right|^p \right] 
				&\lesssim \tilde{\E} \left[ \la \int_0^T (1+|\tilde{m}_\epsilon|_{\H^2}^3) |f_\epsilon|_{\H^2} \dt \ra^p \right] \\
				&\leq
				\tilde{\E} \left[ \sup_{t \in [0,T]} \la 1+|\tilde{m}_\epsilon|_{\H^2}^{6p} \ra \right]^\frac{1}{2} 
				\tilde{\E} \left[ \la \int_0^T |f_\epsilon|_{\H^2} \dt \ra^{2p} \right]^\frac{1}{2} \to 0, 
			\end{align*}
			where the first expectation in the last inequality is finite for all $ \epsilon>0 $ since $ \tilde{m}_\epsilon $ has the same $ L^\infty(0,T;\H^2) $-estimate as $ m_\epsilon $. 
			
			\item 
			For $ {\rm I}_{1,\bar{F}} $, we focus on the difference $ \bar{F}(\tilde{u}_\epsilon+e_3) - \bar{F}(\tilde{m}+e_3) $:
			\begin{align*}
				&\bar{F}(\tilde{u}_\epsilon+e_3) - \bar{F}(\tilde{m}+e_3) \\
				&= - \left[ (\tilde{u}_\epsilon - \tilde{m}) \times \la he_3 + \alpha \tilde{u}_\epsilon \times he_3 \ra 
				+ \alpha (\tilde{m}+e_3) \times \la (\tilde{u}_\epsilon - \tilde{m}) \times he_3 \ra \right] \\
				&\quad - \gamma \left[ (\tilde{u}_\epsilon - \tilde{m}) \times \nabla_v u_\epsilon + (\tilde{m}+e_3) \times \nabla_v(\tilde{u}_\epsilon - m) \right] \\
				&\quad + \left[ (\tilde{u}_\epsilon - \tilde{m}) \times \la \Delta \tilde{u}_\epsilon + \alpha (\tilde{u}_\epsilon+e_3) \times \Delta \tilde{u}_\epsilon \ra 
				+ \alpha (\tilde{m}+e_3) \times \la (\tilde{u}_\epsilon - \tilde{m}) \times \Delta \tilde{u}_\epsilon \ra \right] \\
				&\quad + (\tilde{m}+e_3) \times \la \Delta(\tilde{u}_\epsilon - \tilde{m}) + \alpha (\tilde{m}+e_3) \times \Delta(\tilde{u}_\epsilon - \tilde{m}) \ra \\
				&\quad + (\tilde{u}_\epsilon - \tilde{m}) \times \la \Delta^2 \tilde{u}_\epsilon + \alpha (\tilde{u}_\epsilon+e_3) \times \Delta^2 \tilde{u}_\epsilon \ra \\
				&\quad + \alpha (\tilde{m}+e_3) \times \la (\tilde{u}_\epsilon - \tilde{m}) \times \Delta^2 \tilde{u}_\epsilon \ra \\
				&\quad + (\tilde{m}+e_3) \times \la \Delta^2(\tilde{u}_\epsilon - \tilde{m}) + \alpha (\tilde{m}+e_3) \times \Delta^2(\tilde{u}_\epsilon - \tilde{m}) \ra \\
				&=: 
				\sum_{i=1}^7 {\rm H}_i.
			\end{align*}
			For $ i=1,2,3 $, we deduce $ \int_0^T \langle {\rm H}_i, \varphi \rho \rangle_{\L^2} \dt \to 0 $ in $ L^{p}(\tilde{\Omega}) $ from \eqref{eq: meps to m}, \eqref{eq: |tilde m + e3|=1} and the moment estimates of $ \tilde{m}_\epsilon, \tilde{m} $ in $ L^{2p}(\tilde{\Omega}; L^\infty(0,T; \H^2)) $. 
			
			For $ i=4 $, similarly $ \int_0^T \langle {\rm H}_4, \varphi \rho \rangle_{\L^2} \dt \to 0 $ in $ L^{p}(\tilde{\Omega}) $ since
			$ \Delta \tilde{m}_\epsilon \rightharpoonup \Delta \tilde{m} $ in $ L^{2p}(\tilde{\Omega}; L^2(0,T;\L^2_\rho)) $ 
			and thanks to \eqref{eq: |tilde m + e3|=1}, 
			\begin{align*}
				\varphi \times (\tilde{m}+e_3) + \alpha \la \varphi \times (\tilde{m}+e_3) \ra \times (\tilde{m}+e_3) \in L^2(\tilde{\Omega}; L^2(0,T;\L^2)),
			\end{align*}

			Now we estimate $ {\rm H}_5 $, $ {\rm H}_6 $ and $ {\rm H}_7 $.  
			\begin{align*}
				\lb {\rm H}_5, \varphi \rb_{\L^2_\rho}
				&= \lb (\tilde{u}_\epsilon - \tilde{m}) \times \Delta^2 \tilde{u}_\epsilon, \varphi \rb_{\L^2_\rho} \\
				&\quad + \alpha \lb \tilde{u}_\epsilon - \tilde{m}, \tilde{\psi}^R_\epsilon \la (\tilde{u}_\epsilon+e_3) \times \Delta^2 \tilde{u}_\epsilon \ra \times \varphi \rb_{\L^2_\rho} \\
				&\quad + \alpha \lb \tilde{u}_\epsilon - \tilde{m}, (\tilde{\psi}^R-\tilde{\psi}^R_\epsilon) \la (\tilde{u}_\epsilon+e_3) \times \Delta^2 \tilde{u}_\epsilon \ra \times \varphi \rb_{\L^2_\rho} \\
				&=: 
				{\rm H}_{5a} + \alpha {\rm H}_{5b} - \alpha {\rm H}_{5c}.
			\end{align*}
			For $ {\rm H}_{5a} $, integrating-by-parts, 
			\begin{align*}	
				{\rm H}_{5a}
				&= \lb \Delta (\tilde{u}_\epsilon - \tilde{m}), \Delta \tilde{m} \times \varphi \rho \rb_{\L^2} 
				+ \lb \tilde{u}_\epsilon - \tilde{m}, \Delta \tilde{u}_\epsilon \times \Delta (\varphi \rho) \rb_{\L^2} 
				+ 2\lb \nabla \tilde{u}_\epsilon - \tilde{m}, \Delta \tilde{u}_\epsilon \times \nabla (\varphi \rho) \rb_{\L^2} \\
				&\leq 
				\lb \Delta (\tilde{u}_\epsilon - \tilde{m}), \Delta \tilde{m} \times \varphi \rb_{\L^2_\rho}
				+ c |(\tilde{u}_\epsilon - \tilde{m}) \rho|_{\L^\infty} |\Delta \tilde{u}_\epsilon|_{\L^2} |\Delta (\varphi \rho) \rho^{-1}|_{\L^2} \\
				&\quad + c |\nabla (\tilde{u}_\epsilon - \tilde{m}) \rho|_{\L^4} |\Delta \tilde{u}_\epsilon|_{\L^2} |\nabla (\varphi \rho) \rho^{-1}|_{\L^4}. 
			\end{align*}
			where $ \Delta \tilde{m} \times \varphi \in L^{2p}(\tilde{\Omega}; L^2(0,T;\L^2)) $ and 
			\begin{equation}\label{eq: (u-m)rho L4}
				|\nabla (\tilde{u}_\epsilon - \tilde{m}) \rho|_{\L^4} 
				\lesssim 
				|\nabla(\tilde{u}_\epsilon - \tilde{m}) |_{\L^2_\rho}^{1-\frac{d}{4}} 
				\la |\Delta \tilde{m}_\epsilon|_{\L^2}^\frac{d}{4} + |\Delta \tilde{m}|_{\L^2}^\frac{d}{4} \ra 
				\to 0 \quad \text{ in } L^{2p}(\tilde{\Omega};L^4(0,T)). 
			\end{equation}
			Then, the convergence $ \int_0^T {\rm H}_{5a} \dt \to 0 $ in $ L^{p}(\tilde{\Omega}) $ follows from \eqref{eq: rho^-1 D-D2}, \eqref{eq: meps to m} and \eqref{eq: Jmeps to m in Lp, Linf}.  	
			Moreover,
			\begin{align*}
				{\rm H}_{5b}
				&\leq |(\tilde{\psi}^R_\epsilon)^\frac{1}{2} (\tilde{u}_\epsilon+e_3) \times \Delta^2 \tilde{u}_\epsilon |_{\L^2} |\tilde{u}_\epsilon - \tilde{m}|_{\L^2_\rho} | (\tilde{\psi}^R_\epsilon)^\frac{1}{2} \varphi|_{\L^\infty},
			\end{align*}
			where by Lemma \ref{Lemma: meps H2-est} and \eqref{eq: meps to m}, $ \int_0^T {\rm H}_{5b} \dt \to 0 $ in $ L^{p}(\tilde{\Omega}) $. 	
			For $ {\rm H}_{5c} $, recall $ f_\epsilon = (\psi^R_\epsilon-\psi^R) \varphi \rho $ from part (i), where $ f_\epsilon \to 0 \in L^{2p}(\tilde{\Omega};L^1(0,T;\H^2)) $. 
			Then, 
			\begin{align*}
				{\rm H}_{5c}
				&= \lb \tilde{u}_\epsilon - \tilde{m}, \la (\tilde{u}_\epsilon+e_3) \times \Delta^2 \tilde{u}_\epsilon \ra \times f_\epsilon \rb_{\L^2} \\
				&= 
				\lb (\tilde{u}_\epsilon+e_3) \times \Delta \tilde{u}_\epsilon, \Delta f_\epsilon \times (\tilde{u}_\epsilon - \tilde{m}) + f_\epsilon \times \Delta (\tilde{u}_\epsilon - \tilde{m}) + 2 \nabla f_\epsilon \times \nabla(\tilde{u}_\epsilon - \tilde{m}) \rb_{\L^2} \\
				&\quad + 2 \lb \nabla \tilde{u}_\epsilon \times \Delta \tilde{u}_\epsilon, \nabla f_\epsilon \times (\tilde{u}_\epsilon - \tilde{m}) + f_\epsilon \times \nabla (\tilde{u}_\epsilon - \tilde{m}) \rb_{\L^2} \\
				&\leq 
				(1+|\tilde{u}_\epsilon|_{\L^\infty}) |\Delta \tilde{u}_\epsilon|_{\L^2} \la |\tilde{u}_\epsilon -\tilde{m}|_{\L^\infty} |\Delta f_\epsilon|_{\L^2} 
				+ |\Delta(\tilde{u}_\epsilon - \tilde{m})|_{\L^2} |f_\epsilon|_{\L^\infty} 
				+ 2|\nabla(\tilde{u}_\epsilon - \tilde{m})|_{\L^4} |\nabla f_\epsilon|_{\L^4} \ra \\
				&\quad + 2 |\nabla \tilde{u}_\epsilon|_{\L^4} |\Delta \tilde{u}_\epsilon|_{\L^2} \la |\nabla f_\epsilon|_{\L^4} |\tilde{u}_\epsilon - \tilde{m}|_{\L^\infty} + |f_\epsilon|_{\L^\infty} |\nabla (\tilde{u}_\epsilon - \tilde{m})|_{\L^4} \ra.
			\end{align*}
			Since $ \tilde{u}_\epsilon, \tilde{m} \in L^{2p}(\tilde{\Omega}; L^\infty(0,T;\H^2)) $, 
			$ \int_0^T {\rm H}_{5c} \dt \to 0 $ in $ L^{p}(\tilde{\Omega}) $ by the convergence of $ f_\epsilon $ as in part (i). 	
			
			For $ {\rm H}_6 $, recall \eqref{eq: |tilde m + e3|=1}. Then,
			\begin{align*}
				\alpha^{-1} \lb {\rm H}_6, \varphi \rho \rb_{\L^2}
				&= \lb \Delta^2 \tilde{u}_\epsilon, \la \varphi \rho \times (\tilde{m}+e_3) \ra \times (\tilde{u}_\epsilon - \tilde{m}) \rb_{\L^2} \\
				&= 
				\lb (\tilde{u}_\epsilon - \tilde{m}), \Delta \tilde{u}_\epsilon \times \la \Delta (\varphi \rho) \times (\tilde{m}+e_3) + 2 \nabla(\varphi \rho) \times \nabla \tilde{m} \ra \rb_{\L^2} \\
				&\quad + \lb (\tilde{u}_\epsilon - \tilde{m}), \Delta \tilde{u}_\epsilon \times \la \varphi \times \Delta \tilde{m} \ra \rb_{\L^2_\rho} \\
				&\quad + \lb \Delta (\tilde{u}_\epsilon-\tilde{m}), \Delta \tilde{m} \times \la \varphi \times (\tilde{m}+e_3) \ra \rb_{\L^2_\rho} \\
				&\quad + 2\lb \nabla (\tilde{u}_\epsilon - \tilde{m}), \Delta \tilde{u}_\epsilon \times \la \nabla(\varphi \rho) \times (\tilde{m}+e_3) + \varphi \rho \times \nabla \tilde{m} \ra \rb_{\L^2} \\ 
				&\lesssim 
				|(\tilde{u}_\epsilon - \tilde{m}) \rho|_{\L^\infty} |\Delta \tilde{u}_\epsilon|_{\L^2} \la |\Delta (\varphi \rho) \rho^{-1}|_{\L^2} + |\nabla \tilde{m}|_{\L^4} |\nabla (\varphi \rho) \rho^{-1}|_{\L^4} \ra \\\
				&\quad + |(\tilde{u}_\epsilon - \tilde{m}) \rho|_{\L^\infty} |\Delta \tilde{u}_\epsilon|_{\L^2} |\Delta \tilde{m}|_{\L^2} |\varphi|_{\H^2} \\
				&\quad + \lb \Delta (\tilde{u}_\epsilon-\tilde{m}), \Delta \tilde{m} \times \la \varphi \times (\tilde{m}+e_3) \ra \rb_{\L^2_\rho} \\
				&\quad + |\nabla (\tilde{u}_\epsilon - \tilde{m}) \rho|_{\L^4} |\Delta \tilde{u}_\epsilon|_{\L^2} \la |\nabla (\varphi \rho) \rho^{-1}|_{\L^4} + |\nabla \tilde{m}|_{\L^4} |\varphi|_{\H^2} \ra,
			\end{align*}
			where the third term on the right-hand side converges by the weak convergence in \eqref{eq: meps to m} and the fact $ \Delta \tilde{m} \times (\varphi \times (\tilde{m}+e_3)) \in L^{2p}(\tilde{\Omega}; L^2(0,T;\L^2)) $. 
			Thus, $ \int_0^T \lb {\rm H}_6, \varphi \rho \rb_{\L^2} \dt \to 0 $ in $ L^{p}(\tilde{\Omega}) $ holds by \eqref{eq: rho^-1 D-D2}, \eqref{eq: Jmeps to m in Lp, Linf} and \eqref{eq: (u-m)rho L4}.

			For $ {\rm H}_7 $, we have
			\begin{align*}
				\lb {\rm H}_7, \varphi \rho \rb_{\L^2} 
				&= \lb \Delta^2(\tilde{u}_\epsilon - \tilde{m}), \varphi \rho \times (\tilde{m}+e_3) + \alpha (\tilde{m}+e_3) \times ((\tilde{m}+e_3) \times \varphi \rho) \rb_{\L^2} \\
				&= \lb \Delta(\tilde{u}_\epsilon - \tilde{m}), \Delta \la \varphi \rho \times (\tilde{m}+e_3) + \alpha (\tilde{m}+e_3) \times ((\tilde{m}+e_3) \times \varphi \rho) \ra \rho^{-1} \rb_{\L^2_\rho},
			\end{align*}
			where $ \Delta (\varphi \rho \times (\tilde{m}+e_3) + \alpha (\tilde{m}+e_3) \times ((\tilde{m}+e_3) \times \varphi \rho)) \rho^{-1} \in L^2(\tilde{\Omega}; L^2(0,T;\L^2)) $. 
			Then $ \int_0^T \langle {\rm H}_7, \varphi \rho \rangle_{\L^2} \dt \to 0 $ in $ L^{p}(\tilde{\Omega}) $ follows from the weak convergence in \eqref{eq: meps to m} .  
			
			Therefore, as $ \epsilon \to 0 $, for $ p \in [1,\infty) $,
			\begin{align*}
				\tilde{\E} \left[ \left| \int_0^T {\rm I}_{1,\bar{F}} \dt \right|^p \right] 
				\lesssim \sum_{i=1}^7 \tilde{\E} \left[ \left| \int_0^T \lb {\rm H}_i, \varphi \rho \rb_{\L^2} \dt \right|^p \right] 
				\to 0. 
			\end{align*}

			\item 
			For $ {\rm I}_{2,\bar{F}} $, recall that for any $ p \in [1,\infty) $, 
			\begin{align*}
				\sup_{\epsilon>0} \tilde{\E} \left[ \la \int_0^T \left|(\tilde{\psi}_\epsilon^R)^{\frac{1}{2}}  (\tilde{u}_\epsilon+e_3) \times \Delta^2  \tilde{u}_\epsilon \right|^2_{\L^2}(t) \dt \ra^p \right] < \infty.
			\end{align*}
			Then it is clear from \eqref{eq: m in Linft-H2} that 
			\begin{align*}
				\sup_{\epsilon>0} \tilde{\E} \left[ \la \int_0^T \left|\tilde{\psi}_\epsilon^R \bar{F}(\tilde{u}_\epsilon+e_3) \right|_{\L^2}^2(t) \dt \ra^p \right] < \infty. 
			\end{align*}
			Since $ {\rm I}_{2,\bar{F}} \leq |\tilde{\psi}_\epsilon^R \bar{F}(\tilde{u}_\epsilon+e_3) |_{\L^2} |d_\epsilon(\varphi \rho)|_{\L^2} $, by the approximation property of $ J_\epsilon $, we have $ d_\epsilon(\varphi \rho) \to 0 $ in $ L^{2p}(\tilde{\Omega};L^2(0,T;\L^2)) $ and thus $ \int_0^T {\rm I}_{2,\bar{F}} \dt \to 0 $ in $ L^p(\tilde{\Omega}) $.
			
			Moreover, since $ \tilde{\psi}_\epsilon^R (\tilde{u}_\epsilon+e_3) \times \Delta^2 \tilde{u}_\epsilon $ 
			and $ \tilde{\psi}_\epsilon^R (\tilde{u}_\epsilon+e_3) \times ((\tilde{u}_\epsilon+e_3) \times \Delta^2 \tilde{u}_\epsilon) $ are in 
			$ L^{2p}(\tilde{\Omega};L^2(0,T;\L^2)) $, there exist measurable processes $ Y,Z \in L^{2p}(\tilde{\Omega};L^2(0,T;\L^2)) $ such that 
			\begin{align*}
				\tilde{\psi}_\epsilon^R (\tilde{u}_\epsilon+e_3) \times \Delta^2 \tilde{u}_\epsilon &\rightharpoonup Y \quad \text{ in } L^{2p}(\tilde{\Omega};L^2(0,T;\L^2)), \\
				\tilde{\psi}_\epsilon^R (\tilde{u}_\epsilon +e_3) \times \la (\tilde{u}_\epsilon+e_3) \times \Delta^2 \tilde{u}_\epsilon \ra &\rightharpoonup Z \quad 
				\text{ in } L^{2p}(\tilde{\Omega};L^2(0,T;\L^2)).
			\end{align*}
			By the arguments in previous steps and the uniqueness of weak limit, we have $ Y= (\tilde{m}+e_3) \times \Delta^2 \tilde{m} $ and $ Z = (\tilde{m}+e_3) \times ((\tilde{m}+e_3) \times \Delta^2 \tilde{m}) $ in $ L^{2p}(\tilde{\Omega};L^2(0,T;\L^2)) $ in the weak sense \eqref{eq: cross Delta^2 weak form}.

			\item 
			Similar calculations hold for $ {\rm I}_{1,\nabla_v} $ and $ {\rm I}_{2,\nabla_v} $. 
			By the uniform integrability and strong convergence of $ \tilde{m}_\epsilon $ in $ L^{2p}(\tilde{\Omega}; L^2(0,T;\H^1_\rho)) $ and properties of $ J_\epsilon $, 
			\begin{align*}
				\tilde{\E} \left[ \int_0^T \lb \nabla_v (\tilde{u}_\epsilon - \tilde{m}), \varphi \rb_{\L^2_\rho} \dt \right]
				&\leq \tilde{\E} \left[ \int_0^T |v|_{L^\infty(\R^d;\R^d)} |\nabla (\tilde{u}_\epsilon - \tilde{m})|_{\L^2_\rho} |\varphi|_{\L^2} \dt \right] \to 0, \\
				\tilde{\E} \left[ \int_0^T \lb \nabla_v \tilde{u}_\epsilon, d_\epsilon(\varphi \rho) \rb_{\L^2} \dt \right]
				&\leq 
				|v|_{L^\infty(\R^d; \R^d)} 
				\la \sup_{\epsilon>0} \tilde{\E} \left[ |\nabla \tilde{u}_\epsilon|_{L^\infty(0,T;\L^2)}^2 \right] \ra^\frac{1}{2} 
				\tilde{\E} \left[\int_0^T |d_\epsilon(\varphi \rho)|_{\L^2}^2 \dt \right]^\frac{1}{2} \to 0, 
			\end{align*}
			as $ \epsilon \to 0 $. The same arguments follow for convergence in $ L^p(\tilde{\Omega}) $, $ p \in (1,\infty) $. 
		\end{enumerate}

		{\bf Convergence of $ S_{k,\epsilon} $.} 
		
		For $ {\rm I}_{1,S_k} $,
		\begin{align*}
			{\rm I}_{1,S_k} 
			&= \lb S_k(\tilde{u}_\epsilon) - S_k(\tilde{m}), \varphi \rb_{\L^2_\rho} \\
			&= \lb (\nabla_{g_k})^2(\tilde{u}_\epsilon - \tilde{m}), \varphi \rho \rb_{\L^2} \\
			&= -\lb \nabla_{g_k}(\tilde{u}_\epsilon - \tilde{m}), \nabla_{g_k}(\varphi \rho) + ({\rm div} g_k) \varphi \rho \rb_{\L^2} \\
			&\lesssim |g_k|_{W^{1,\infty}(\R^d;\R^d)} |\nabla (\tilde{u}_\epsilon - \tilde{m}) \rho|_{\L^2} \la |\nabla (\varphi \rho) \rho^{-1}|_{\L^2} + |\varphi|_{\L^2} \ra.
		\end{align*}
		Then $ \sum_k \int_0^T {\rm I}_{1,S_k} \dt \to 0 $ in $ L^p(\tilde{\Omega}) $ by \eqref{eq: rho^-1 D-D2}, \eqref{eq: g in W2inf} and \eqref{eq: meps to m}. 
		
		As in the case of $ {\rm I}_{2,\bar{F}} $, the convergence of $ \sum_k {\rm I}_{2,S_k} = \sum_k \langle S_k(\tilde{u}_\epsilon), d_\epsilon(\varphi \rho) \rangle_{\L^2} $ follows from \eqref{eq: g in W2inf}, the $ L^2(0,T;\H^2) $-integrability of $ \tilde{u}_\epsilon $ and properties of $ J_\epsilon $.

		{\bf Convergence of $ G_{k,\epsilon} $.} 
		
		By \eqref{eq: |Ju|_L2rho <= c|u|_L2rho}, 
		\begin{align*}
			|G_{k,\epsilon}(\tilde{m}_\epsilon) - G_k(\tilde{m})|_{\L^2_\rho}
			&\leq 
			%(\tilde{\psi}_\epsilon^R - \tilde{\psi}^R) |J_\epsilon G_k(\tilde{m}_\epsilon) |_{\L^2}
			|J_\epsilon (G_k(\tilde{u}_\epsilon) - G_k(\tilde{m}))|_{\L^2_\rho} 
			+ |J_\epsilon G_k(\tilde{m}) - G_k(\tilde{m}))|_{\L^2_\rho} \\
			&\leq 
			c|G_k(\tilde{u}_\epsilon) - G_k(\tilde{m})|_{\L^2_\rho} 
			+ |d_\epsilon (G_k(\tilde{m})) - G_k(\tilde{m}))|_{\L^2}.
		\end{align*}
		Since $ G_k(\tilde{m}) = -\nabla_{g_k} \tilde{m} \in L^{2p}(\tilde{\Omega}; L^\infty(0,T;\L^2)) $, and 
		\begin{align*}
			|G_k(\tilde{u}_\epsilon) - G_k(\tilde{m})|_{\L^2_\rho} &\leq |g_k|_{L^\infty(\R^d;\R^d)} |\nabla (\tilde{u}_\epsilon - \tilde{m})|_{\L^2_\rho},
		\end{align*}
		we obtain convergence for $ \smash{\sum_k |G_{k,\epsilon}(\tilde{m}_\epsilon) - G_k(\tilde{m})|_{\L^2_\rho}^2} $ in $ L^p(\tilde{\Omega};L^1(0,T)) $ from \eqref{eq: g in W2inf}, \eqref{eq: meps to m} and approximation properties of $ J_\epsilon $.  		
	\end{proof}
	
	By the density of $ L^4(0,T;\mathcal{C}_c^\infty(\R^d)) $ in $ L^2(0,T;\L^2_\rho) $, the weak convergences in Lemma \ref{Lemma: conv Feps,Seps,Geps} reduce to 
	\begin{align*}
		F^R_\epsilon(\tilde{m}_\epsilon) + \frac{1}{2} \sum_k S_{k,\epsilon}(\tilde{m}_\epsilon) 
		\rightharpoonup F(\tilde{m}+e_3) + \frac{1}{2} \sum_k S_k(\tilde{m}) \quad 
		\text{in } L^{2p}(\tilde{\Omega};L^2(0,T;\L^2_\rho)).
	\end{align*} 
	%By Lemma \ref{Lemma: meps H2-est} and \eqref{eq: m in Linft-H2}, $ F^R_\epsilon(\tilde{m}_\epsilon), F(\tilde{m}+e_3),  S_{k,\epsilon}(\tilde{m}_\epsilon) $ and $ S_k(\tilde{m}) $ are in $ L^{2p}(\tilde{\Omega};L^2(0,T;\L^2)) $. 
	%Taking simple test functions $ \varphi_N(t) = \sum_{i=1}^N f_i \mathbbm{1}_{D_i}(t) $ for disjoint intervals $ D_i $ of $ [0,T] $ and $ f_i \in \L^2_\rho $ and using the density of $ \mathcal{C}_c^\infty(\R^d) $ in $ \L^2_\rho $, 
	%we observe that the weak convergences in Lemma \ref{Lemma: conv Feps,Seps,Geps} hold in $ L^{2p}(\tilde{\Omega};L^2(0,T;\L^2_\rho)) $. 

	\section{Proof of Theorem \ref{Theorem: E!}}\label{Section: Proof of Theorem}
	
	\subsection{Existence of martingale solution}\label{Section: existence}	
	As in \cite[Section 5]{BrzezniakGoldysJegaraj_2013}, we can show that the limit process $ (\tilde{m},\tilde{W}) $ on $ (\tilde{\Omega}, \tilde{\mathcal{F}}, \tilde{\P}) $ from Section \ref{Section: convergence for fixed R} is a martingale solution of the equation
	\begin{equation}\label{eq: sLLG m}
		\tilde{m}(t)
		= \tilde{m}_0 + \int_0^t \bar{F}(\tilde{m}+e_3)(s) \ds + \frac{1}{2} \sum_k \int_0^t S_k(\tilde{m})(s) \ds + \sum_k \int_0^t G_k(\tilde{m})(s) \ \d \tilde{W}_k(s).
	\end{equation}
	We only outline the arguments here. 
	Since $ \{\tilde{W}_\epsilon\} $ have the same laws as $ W $, the processes $ \tilde{W}_\epsilon $ and thus $ \tilde{W} $ are Wiener processes with covariance $ Q $ on $ (\tilde{\Omega}, \tilde{\mathcal{F}}, \tilde{\P}) $. Using the pointwise convergence $ (\tilde{m}_\epsilon, \tilde{W}_\epsilon) \to (\tilde{m}, \tilde{W}) $ in $ E \times \mathcal{C}([0,T];H^4(\R^d;\R^d)) $, Lemma \ref{Lemma: conv Feps,Seps,Geps} and the embedding $ \L^2_\rho \Subset \H^{-1} $, we deduce from the uniqueness of weak limit in $ L^2(\tilde{\Omega}; \H^{-1}) $ that $ \tilde{m}(t) $ satisfies \eqref{eq: sLLG m} (in the weak sense) for every $ t \in [0,T] $ and 
	\begin{align*}
		\E \left[ |\tilde{m}(t)|_{L^\infty(0,T;\H^2)}^2 + \left|(\tilde{m}+e_3) \times \Delta^2 \tilde{m}\right|_{L^2(0,T;\L^2)}^2 \right] <\infty,
	\end{align*} 
	with $ |\tilde{m}+e_3| = 1 $, a.e. in $ [0,T] \times \R^d $, $ \tilde{\P} $-a.s. 
	Thus, $ \tilde{M}:= \tilde{m}+e_3 $ is a solution of \eqref{eq: sLLG with Delta^2} in the sense of Definition \ref{Def: martingale solution}. 
	
	Using Kolmogorov's criterion, we deduce that $ \tilde{m} = \tilde{M}-e_3 $ has paths in $ \mathcal{C}^\sigma([0,T];\L^2) $ $ \tilde{\P} $-a.s. 
	Let $ 0\leq s \leq t \leq T $ and $ p \in [1,\infty) $, we have
	\begin{align*}
		\tilde{\E} \left[ \la \int_s^t |F(\tilde{M}(r))|_{\L^2} \dr \ra^{2p} \right]
		&\leq |t-s|^p \tilde{\E} \left[ \la \int_s^t |F(\tilde{M}(r))|_{\L^2}^2 \dr \ra^p \right] \\
		&\lesssim |t-s|^p \tilde{\E} \left[ \la \int_0^T \la |\nabla \tilde{m}|_{\H^1}^2 + |(\tilde{m}+e_3) \times \Delta^2 \tilde{m}|_{\L^2}^2 \ra(r) \dr \ra^p \right] \\
		&\lesssim |t-s|^p, \\
		\tilde{\E} \left[ \la \sum_k \int_s^t |S_k(\tilde{m}(r))|_{\L^2} \dr \ra^{2p} \right]
		&\leq \tilde{\E} \left[ \la \sum_k |g_k|_{W^{1,\infty}(\R^d;\R^d)}^2 \int_s^t |\nabla \tilde{m}(r)|_{\H^1} \dr \ra^{2p} \right] \\
		&\lesssim |t-s|^{2p} \tilde{\E} \left[ |\nabla \tilde{m}|_{L^\infty(0,T;\H^1)}^{2p} \right] \\
		&\lesssim |t-s|^{2p}, \\
		\tilde{\E} \left[ \la \sum_k \left| \int_s^t G_k(\tilde{m}(r)) \ \d\tilde{W}_k(r) \right|_{\L^2} \ra^{2p} \right]
		&= \tilde{\E} \left[ \la \sum_k \int_s^t |G_k(\tilde{m}(r))|^2_{\L^2} \dr \ra^p \right] \\
		&\leq \tilde{\E} \left[ \la \sum_k |g_k|_{L^\infty(\R^d;\R^d)}^2 \int_s^t |\tilde{m}(r)|_{\H^1}^2 \dr \ra^p \right]	\\
		&\lesssim |t-s|^p.
	\end{align*}
	Therefore,
	\begin{align*}
		\tilde{\E} \left[ |\tilde{m}(t) - \tilde{m}(s)|_{\L^2}^{2p} \right]
		&\lesssim |t-s|^p,
	\end{align*}  
	proving the $ \mathcal{C}^\sigma([0,T];\L^2) $-regularity for $ \sigma \in [0,\frac{1}{2}) $.

	\subsection{Pathwise uniqueness}\label{Section: pathwise unique}
	Let $ M_1 $ and $ M_2 $ be two martingale solutions of the equation \eqref{eq: sLLG with Delta^2} defined on the same filtered probability space and with the same Wiener process $ (\Omega, \mathcal{F}, (\mathcal{F}_t), \P, W) $. 
	As usual, we set $ m_1 := M_1 - e_3 $ and $ m_2 := M_2 - e_3 $. Then, $ m_1 $ and $ m_2 $ are martingale solutions of \eqref{eq: sLLG m}. 
	Note that in distributional sense
	\begin{align*}
		M_1 \times (M_1 \times \Delta M_1) 
		&= -|\nabla M_1|^2 M_1 - \Delta M_1, \\
		M_1 \times (M_1 \times \Delta^2 M_1) 
		&= \lb M_1, \Delta^2 M_1 \rb M_1 - \Delta^2 M_1.
	\end{align*}
	Let $ y:=M_1 - M_2 = m_1 - m_2 \in \L^2 $. 
	By the Gagliardo-Nirenberg's inequality, 
	\begin{equation}\label{eq: |Dy|L2, |Dy|L4, |y|Linf}
		\begin{aligned}
			|\nabla y|_{\L^2}^2 
			&= -\lb y, \Delta y \rb_{\L^2}
			\leq |y|_{\L^2} |\Delta y|_{\L^2} \\
			%\leq \delta^{-1} |y|^2_{\L^2} + \delta |\Delta y|_{\L^2}^2, \\
			|\nabla y|_{\L^4}
			&\leq c |y|_{\L^2}^{\frac{1}{2}-\frac{d}{8}} |\Delta y|_{\L^2}^{\frac{1}{2}+\frac{d}{8}}, \\
			|y|_{\L^\infty} 
			&\leq c |y|_{\L^2}^{1-\frac{d}{4}} |\Delta y|_{\L^2}^{\frac{d}{4}}. 
		\end{aligned}
	\end{equation}
	In addition, $ |y|_{\L^\infty} \leq 2 $, $ y(0) = 0 $ and we have
	\begin{align*}
		\d y 
		&= 
		y \times (\Delta M_1 + \Delta^2 M_1) \dt 
		+ M_2 \times (\Delta y + \Delta^2 y) \dt - y \times he_3 \dt \\
		&\quad - \alpha \la \lb \nabla y, \nabla (M_1 + M_2) \rb M_1 + |\nabla M_2|^2 y \ra \dt \\
		&\quad + \alpha \la \lb y, \Delta^2 M_1 \rb M_1 + \lb M_2, \Delta^2 y \rb M_1 + \lb M_2, \Delta^2 M_2 \rb y \ra \dt \\
		&\quad - \alpha \la y \times \la M_1 \times he_3 \ra + M_2 \times (y \times he_3) \ra \dt \\
		&\quad - \gamma \la y \times \nabla_v M_1 + M_2 \times \nabla_v y \ra \dt \\
		&\quad - \alpha \la \Delta^2 y + \Delta y \ra \dt - \nabla_v y \dt 
		+ \frac{1}{2} \sum_k (\nabla_{g_k})^2 y \dt 
		- \sum_k \nabla_{g_k} y \dW_k.
	\end{align*}
	Applying It{\^o}'s lemma to $ \frac{1}{2}|y(t)|_{\L^2}^2 $, 
	\begin{align*}
		\frac{1}{2} \d|y(t)|_{\L^2}^2 
		&= 
		\lb y, M_2 \times (\Delta y + \Delta^2 y) - \alpha M_2 \times (y \times he_3) \rb_{\L^2} \dt \\
		&\quad - \alpha \lb y, \lb \nabla y, \nabla (M_1 + M_2) \rb M_1 \rb_{\L^2} \dt - \alpha \int_{\R^d} |\nabla M_2|^2 |y|^2 \dx \dt \\
		&\quad + \alpha \lb y, \lb y, \Delta^2 M_1 \rb M_1 + \lb M_2, \Delta^2 y \rb M_1 + \lb M_2, \Delta^2 M_2 \rb y \rb_{\L^2} \dt \\
		&\quad - \gamma \lb y, M_2 \times \nabla_v y \rb_{\L^2} \dt 
		+ \alpha \la |\nabla y|_{\L^2}^2 - |\Delta y|_{\L^2}^2 \ra \dt \\
		&\quad 
		+ \frac{1}{2} \sum_k \la \lb y, (\nabla_{g_k})^2 y \rb_{\L^2} + |\nabla_{g_k} y|_{\L^2}^2 \ra \dt - \sum_k \lb y, \nabla_{g_k} y \rb_{\L^2} \dW_k,
	\end{align*}
	where $ \langle y, \nabla_v y \rangle_{\L^2} = 0 $ and by \eqref{eq: |Dy|L2, |Dy|L4, |y|Linf}, 
	\begin{align*}
		|\nabla y|_{\L^2}^2 \leq \delta^{-1} |y|_{\L^2}^2 + \delta |\Delta y|_{\L^2}^2, 
	\end{align*}
	for $ \delta \in (0,1) $. 
	
	For the Stratonovich and It{\^o} correction terms,  
	\begin{align*}
		\sum_k \la \lb y, (\nabla_{g_k})^2 y \rb_{\L^2} + |\nabla_{g_k} y|_{\L^2}^2 \ra
		&= -\sum_k \lb y, ({\rm div} g_k) \nabla_{g_k} y \rb_{\L^2} \\
		&\lesssim \delta^{-1} \sum_k |g_k|_{W^{1,\infty}(\R^d;\R^d)}^2 |y|_{\L^2}^2 + \delta |\nabla y|_{\L^2}^2 \\
		&\lesssim |y|_{\L^2}^2 + \delta |\Delta y|_{\L^2}^2.
	\end{align*}
	Also, for the diffusion term, 
	\begin{align*}
		\sum_k \lb y, \nabla_{g_k} y \rb_{\L^2}^2 
		&= \frac{1}{2}\sum_k \lb y, ({\rm div} g_k) y \rb_{\L^2}^2 \\
		&\leq \frac{1}{2} \sum_k |{\rm div} g_k|_{\L^2}^2 |y|_{\L^\infty}^2 |y|_{\L^2}^2 \\
		&\leq 2\sum_k |g_k|_{H^1(\R^d;\R^d)}^2 |y|_{\L^2}^2, 
	\end{align*}
	where the right-hand side has finite expectation, implying that the It{\^o} integral is a well-defined continuous martingale. 
	
	We estimate the main drift part below.  
	\begin{align*}
		&\lb y, M_2 \times \Delta y - \alpha M_2 \times (y \times he_3) - \gamma M_2 \times \nabla_v y \rb_{\L^2} \\
		&= 
		\lb y, M_2 \times \Delta y \rb_{\L^2} 
		- \alpha \lb y, M_2 \times (y \times he_3) \rb_{\L^2} 
		- \gamma \lb y, M_2 \times \nabla_v y \rb_{\L^2} \\
		&\lesssim 
		|y|_{\L^2}^2 + \delta |\Delta y|_{\L^2}^2, \\
	\end{align*}
	and
	\begin{align*}
		\lb y, M_2 \times \Delta^2 y \rb_{\L^2} 
		&= 
		\lb y \times \Delta M_2, \Delta y \rb_{\L^2} 
		+ 2 \lb \nabla y \times \nabla M_2, \Delta y \rb_{\L^2} \\
		&\leq
		|y|_{\L^\infty} |\Delta M_2|_{\L^2} |\Delta y|_{\L^2} 
		+ 2 |\nabla y|_{\L^4} |\nabla M_2|_{\L^4} |\Delta y|_{\L^2} \\
		&\lesssim 
		|\Delta M_2|_{\L^2} |y|_{\L^2}^{1-\frac{d}{4}} |\Delta y|_{\L^2}^{1+\frac{d}{4}}
		+ |\nabla M_2|_{\H^1} |y|_{\L^2}^{\frac{1}{2}-\frac{d}{8}} |\Delta y|_{\L^2}^{\frac{3}{2} + \frac{d}{8}} \\
		&\lesssim	
		\la |\Delta M_2|_{\L^2}^{\frac{8}{4-d}} + |\Delta M_2|_{\L^2}^2 + |\nabla M_2|_{\H^2}^\frac{16}{4-d} \ra |y|_{\L^2}^2  
		+ \delta |\Delta y|_{\L^2}^2.
	\end{align*}
	Similarly, for the $ \alpha $ terms, 
	\begin{align*}
		\lb y, \lb \nabla y, \nabla (M_1 + M_2) \rb M_1 \rb_{\L^2} 
		&\leq |y|_{\L^\infty} |\nabla y|_{\L^2} |\nabla (M_1 + M_2) |_{\L^2} \\
		&\lesssim |y|_{\L^\infty} |y|_{\L^2}^\frac{1}{2} |\Delta y|_{\L^2}^\frac{1}{2} |\nabla (M_1 + M_2) |_{\L^2} \\ 
		&\lesssim |y|_{\L^2}^{\frac{3}{2}-\frac{d}{4}} |\Delta y|_{\L^2}^{\frac{1}{2}+ \frac{d}{4}} |\nabla (M_1 + M_2) |_{\L^2} \\
		&\lesssim |\nabla (M_1 + M_2)|_{\L^2}^{\frac{8}{6-d}}|y|_{\L^2}^2 + \delta|\Delta y|_{\L^2}^2,
	\end{align*}
	and
	\begin{align*}
		\lb y, \lb y, \Delta^2 M_1 \rb M_1 \rb_{\L^2} 
		&= \lb (y \cdot M_1) y, \Delta^2 M_1 \rb_{\L^2} \\
		&= \lb (\Delta y \cdot M_1)y + 2 (\nabla y \cdot \nabla M_1) y + (y \cdot M_1) \Delta y, \Delta M_1 \rb_{\L^2} \\
		&\quad + 2\lb (\nabla y \cdot M_1 + y \cdot \nabla M_1) \nabla y, \Delta M_1 \rb_{\L^2} 
		+ \int_{\R^d} \lb y, \Delta M_1 \rb^2 \dx \\
		&\leq 
		2\la |y|_{\L^\infty} |\Delta y|_{\L^2} + |\nabla y|_{\L^4}^2 \ra |\Delta M_1|_{\L^2} \\
		&\quad + 4 |y|_{\L^\infty} |\nabla y|_{\L^4} |\nabla M_1|_{\L^4} |\Delta M_1|_{\L^2} + |y|_{\L^\infty}^2 |\Delta M_1|_{\L^2}^2 \\
		&\lesssim 
		|y|_{\L^2}^{1-\frac{d}{4}} |\Delta y|_{\L^2}^{1+\frac{d}{4}} |\Delta M_1|_{\L^2} 
		+ \la |y|_{\L^2}^{\frac{3}{2}-\frac{3d}{8}} |\Delta y|_{\L^2}^{\frac{1}{2}+\frac{3d}{8}} + |y|_{\L^2}^{2-\frac{d}{2}} |\Delta y|_{\L^2}^\frac{d}{2} \ra |\nabla M_1|_{\H^1}^2 \\
		&\lesssim 
		\la |\Delta M_1|_{\L^2}^\frac{8}{4-d} + |\nabla M_1|_{\H^1}^\frac{32}{12-3d} \ra |y|_{\L^2}^2 
		+ \delta|\Delta y|_{\L^2}^2, \\
		\lb y, \lb M_2, \Delta^2 M_2 \rb y \rb_{\L^2} 
		&= \lb |y|^2 M_2, \Delta^2 M_2 \rb_{\L^2} \\
		&= \int_{\R^d} |y|^2 |\Delta M_2|^2 \ dx + 2\lb (y \cdot \Delta y + |\nabla y|^2) M_2 + 2(y \cdot \nabla y) \nabla M_2, \Delta M_2 \rb_{\L^2} \\
		&\leq 2\la |y|_{\L^\infty} |\Delta y|_{\L^2} + |\nabla y|_{\L^4}^2 + 2|y|_{\L^\infty} |\nabla y|_{\L^4} |\nabla M_2|_{\L^4} \ra |\Delta M_2|_{\L^2} + |y|_{\L^\infty}^2 |\Delta M_2|_{\L^2}^2 \\
		&\lesssim 
		|y|_{\L^2}^{1-\frac{d}{4}} |\Delta y|_{\L^2}^{1+\frac{d}{4}} |\Delta M_2|_{\L^2} 
		+ \la |y|_{\L^2}^{2-\frac{d}{2}} |\Delta y|_{\L^2}^\frac{d}{2} + |y|_{\L^2}^{\frac{3}{2}-\frac{3d}{8}} |\Delta y|_{\L^2}^{\frac{1}{2}+\frac{3d}{8}} \ra |\nabla M_2|_{\H^1} |\Delta M_2|_{\L^2} \\
		&\lesssim |y|_{\L^2}^2 \la |\Delta M_2|_{\L^2}^{\frac{8}{4-d}} + |\nabla M_2|_{\H^1}^{\frac{32}{12-3d}} \ra + \delta |\Delta y|_{\L^2}^2.	
	\end{align*}
	For the $ \Delta^2 y $ term, 
	\begin{align*}
		\lb y, \lb M_2, \Delta^2 y \rb M_1 \rb_{\L^2} 
		&= \lb (y \cdot M_1) M_2, \Delta^2 y \rb_{\L^2} \\
		&= \int_{\R^d} \lb \Delta y, M_1 \rb \lb \Delta y, M_2 \rb \dx \\
		&\quad + \lb (y \cdot \Delta M_1)M_2 + (y \cdot M_1) \Delta M_2 + 2 (y \cdot \nabla M_1) \nabla M_2, \Delta y \rb_{\L^2} \\
		&\quad + 2 \lb  (\nabla y \cdot \nabla M_1) M_2 + (\nabla y \cdot M_1) \nabla M_2, \Delta y \rb_{\L^2} \\
		&\leq 
		\int_{\R^d} \lb \Delta y, M_1 \rb \lb \Delta y, M_2 \rb \dx \\
		&\quad + |y|_{\L^\infty} |\Delta y|_{\L^2} \la |\Delta M_1|_{\L^2} + |\Delta M_2|_{\L^2} + |\nabla M_1|_{\L^4} |\nabla M_2|_{\L^4} \ra \\
		&\quad + 2 |\nabla y|_{\L^4} |\Delta y|_{\L^2} \la |\nabla M_1|_{\L^4} + |\nabla M_2|_{\L^4} \ra \\
		&\leq 
		\int_{\R^d} \lb \Delta y, M_1 \rb \lb \Delta y, M_2 \rb \dx \\
		&\quad + c |y|_{\L^2}^{1-\frac{d}{4}} |\Delta y|_{\L^2}^{1+\frac{d}{4}} \la |\Delta M_1|_{\L^2} + |\Delta M_2|_{\L^2} + |\nabla M_1|_{\H^1} |\nabla M_2|_{\H^1} \ra \\
		&\quad + c |y|_{\L^2}^{\frac{1}{2} - \frac{d}{8}} |\Delta y|_{\L^2}^{\frac{3}{2}+\frac{d}{8}} \la |\nabla M_1|_{\H^1} + |\nabla M_2|_{\H^1} \ra \\
		&\leq 
		\int_{\R^d} \lb \Delta y, M_1 \rb \lb \Delta y, M_2 \rb \dx + c \delta |\Delta y|_{\L^2}^2 \\
		&\quad + c(\delta^{-1}) \la |\Delta M_1|_{\L^2} + |\Delta M_2|_{\L^2} + |\nabla M_1|_{\H^1} |\nabla M_2|_{\H^1} \ra^{\frac{8}{4-d}} |y|_{\L^2}^2 \\
		&\quad + c(\delta^{-1}) \la |\nabla M_1|_{\H^1} + |\nabla M_2|_{\H^1} \ra^\frac{16}{4-d} |y|_{\L^2}^2, 
	\end{align*}
	where
	\begin{align*}
		\int_{\R^d} \lb \Delta y, M_1 \rb \lb \Delta y, M_2 \rb \dx 
		&= \frac{1}{2} \int_{\R^d} \lb \Delta y, M_1 \rb \la |\nabla M_2|^2 + \lb M_2, \Delta M_1 \rb \ra \ds \\
		&\quad - \frac{1}{2} \int_{\R^d} \lb \Delta y, M_2 \rb \la |\nabla M_1|^2 + \lb M_1, \Delta M_2 \rb \ra \dx \\
		&= 
		\frac{1}{2} \int_{\R^d} \lb \Delta y, y \rb \la |\nabla M_2|^2 + \lb M_2, \Delta M_1 \rb \ra \dx \\
		&\quad + \frac{1}{2} \int_{\R^d} \lb \Delta y, M_2 \rb \la \lb \nabla (M_1 + M_2), -\nabla y \rb + \lb M_2, \Delta y \rb - \lb y, \Delta M_2 \rb \ra \dx \\
		&\leq 
		\frac{1}{2} |\Delta y|_{\L^2} |y|_{\L^\infty} \la |\nabla M_2|_{\L^4}^2 + |\Delta M_1|_{\L^2} + |\Delta M_2|_{\L^2} \ra \\
		&\quad + \frac{1}{2} |\Delta y|_{\L^2} |\nabla y|_{\L^4} |\nabla (M_1 + M_2)|_{\L^4} + \frac{1}{2}|\Delta y|_{\L^2}^2 \\
		&\leq
		\frac{1}{2}|\Delta y|_{\L^2}^2 
		+ c |y|_{\L^2}^{1-\frac{d}{4}}|\Delta y|_{\L^2}^{1+\frac{d}{4}} \la |\nabla M_2|_{\H^1}^2 + |\Delta M_1|_{\L^2} + |\Delta M_2|_{\L^2} \ra \\
		&\quad + c |y|_{\L^2}^{\frac{1}{2} - \frac{d}{8}} |\Delta y|_{\L^2}^{\frac{3}{2}+\frac{d}{8}}|\nabla (M_1 + M_2)|_{\H^1} \\
		&\leq
		\frac{1}{2}|\Delta y|_{\L^2}^2 
		+ c\delta |\Delta y|_{\L^2}^2 \\
		&\quad + c(\delta^{-1}) \la \la |\nabla M_2|_{\H^1}^2 + |\Delta M_1|_{\L^2} + |\Delta M_2|_{\L^2} \ra^\frac{8}{4-d} + |\nabla (M_1 + M_2)|_{\H^1}^\frac{16}{4-d} \ra |y|_{\L^2}^2. 
	\end{align*}
	
	Thus, 
	\begin{align*}
		\frac{1}{2}\d |y(t)|_{\L^2}^2 \leq \phi(t) |y(t)|_{\L^2}^2 \dt + \alpha \la c_1 \delta-\frac{1}{2} \ra |\Delta y(t)|_{\L^2}^2 \dt - \sum_k \lb y, \nabla_{g_k} y \rb_{\L^2} \dW_k,
	\end{align*}
	for some constant $ c_1 $ and
	\begin{align*}
		\phi(t) = c(h,\delta^{-1}) \la 1+|\nabla M_1|^{q}_{\H^1} + |\nabla M_2|^{q}_{\H^1} \ra, 
	\end{align*}
	for some $ q \in [2,\infty) $ depending on $ d $. 
	Since $ \nabla M_1, \nabla M_2 $ in $ L^{2p}(\Omega; L^\infty(0,T;\H^1)) $, we have $ \int_0^T \phi(t) \dt < \infty $, $ \P $-a.s. 
	Let $ \delta $ be sufficiently small such that $ c_1 \delta-\frac{1}{2} < 0 $. 
	
	If $ {\rm div} g_k = 0 $ for all $ k $, then applying Gronwall's inequality directly and using the fact $ y(0) = 0 $,
	\begin{align*}
		|y(t)|_{\L^2}^2 \leq |y(0)|_{\L^2}^2 e^{\int_0^T \phi(t) \dt} = 0, \quad \P\text{-a.s.} 
	\end{align*}
	Otherwise, let $ X(t) := \frac{1}{2}|y(t)|_{\L^2}^2 e^{-2 \int_0^t \phi(s) \ds} $. 
	Then we have
	\begin{align*}
		\d X(t) 
		&= \frac{1}{2}\lb \d |y(t)|_{\L^2}^2, e^{-2\int_0^t \phi(s) \ds} \rb 
		+ \frac{1}{2} \lb |y(t)|_{\L^2}^2, -2 \phi(t) e^{-2\int_0^t \phi(s) \ds} \rb \\
		&\quad + \frac{1}{2} \lb \d |y(t)|_{\L^2}^2, \d e^{-2\int_0^t \phi(s) \ds} \rb \\
		&\leq -e^{-2\int_0^t \phi(s) \ds} \sum_k \lb y, \nabla_{g_k} y \rb \dW_k,
	\end{align*}
	where the process $ M(t):= \sum_k \int_0^t e^{-2\int_0^t \phi(s) \ds} \langle y, \nabla_{g_k} y \dW_k \rangle_{\L^2} $ is a martingale. 
	Hence, $ \E[X(t)] \leq \E[M(t)] = 0 $, implying that
	\begin{align*}
		\E \left[ |y(t)|_{\L^2}^2 \right] = 0, \quad t \in [0,T].  
	\end{align*}	
	This proves the pathwise uniqueness of the solution of \eqref{eq: sLLG with Delta^2}, concluding the proof of Theorem \ref{Theorem: E!}.

\appendix

\section{Preliminary estimates}\label{Section: preliminary estimates}
	
	\subsection{Useful formulae}\label{Section: useful formulae}
	Let $ u,w \in \H^2 $ and $ f, g \in H^1(\R^d; \R^d) $, $ d=2,3 $. 
	Then
	\begin{align*}
		&(\nabla_g)(u \times w) = \nabla_g u \times w + u \times \nabla_g w, \\
		&(\nabla_g)^2 u = \sum_{i=1}^d g_i^2 \partial_i^2 u + \sum_{i,j=1,i \neq j}^d g_i g_j \partial_{ij} u + \nabla_{\nabla_g g} u, \\
		&\nabla_f (\nabla_g u) = \sum_{i,j=1}^d f_i g_j \partial_{ij} u + \nabla_{\nabla_f g} u.
	\end{align*}
	For $ u,w $ with suitable decay properties, 
	\begin{align}
		\lb \nabla_f u, w \rb_{\L^2} 
		%= \sum_{i=1}^d \lb \partial_i u, f_i w \rb_{\L^2} 
		&= -\lb u, \nabla_f w + ({\rm div} f) w \rb_{\L^2}, \label{eq: <Dfu,w>}\\
		\lb \nabla_f u, \nabla_g w \rb_{\L^2} 
		&= - \sum_{i,j=1}^d \lb f_ig_j \partial_{ij}u + \partial_j(f_ig_j) \partial_i u, w \rb_{\L^2}. \label{eq: <Dfu,Dgw>}
	\end{align}
	%	which implies
	%	\begin{align*}
		%		\lb \nabla_f u, u \rb_{\L^2} 
		%		&= -\frac{1}{2} \lb ({\rm div} f)  u, w \rb_{\L^2}. 
		%	\end{align*}
	%	Moreover, we deduce the following inequalities for $ f,g \in W^{2,\infty}(\R^d;\R^d) $:
	%	\begin{align*}		
		%		&|\nabla_g u|^2_{\L^2} 
		%		\leq 2 |g|^2_{\L^\infty} |\nabla u|^2_{\L^2}, \\ %= 2 |g|^2_{\L^\infty} \sum_{i=1}^2 |\partial_i u|^2_{\L^2}, \\
		%		%
		%		&|\partial_{ij} u|_{\L^2}^2 
		%		= \lb \partial_i \partial_j u, \partial_i \partial_j u \rb_{\L^2} 
		%		= \lb \partial_i^2 u, \partial_j^2 u \rb_{\L^2} 
		%		\leq \frac{1}{2} |\Delta u|^2_{\L^2}, \\
		%		%
		%		&|\partial_i^2 u|_{\L^2}^2 + |\partial_j^2 u|^2_{\L^2} 
		%		= |\Delta u|^2_{\L^2} - 2 \lb \partial_i^2 u, \partial_j^2 u \rb_{\L^2} 
		%		= |\Delta u|^2_{\L^2} - 2|\partial_{ij}u|_{\L^2}^2 
		%		\leq |\Delta u|^2_{\L^2}, \\
		%		%
		%		&|(\nabla_g)^2 u|_{\L^2} 
		%		\lesssim |\nabla u|^2_{\H^1}, \\
		%		%
		%		&|\nabla_f (\nabla_g u)|_{\L^2} 
		%		\lesssim |\nabla u|^2_{\H^1}. 
		%	\end{align*}
	
	By \eqref{eq: <Dfu,w>}, for $ f \in W^{2,\infty}(\R^d;\R^d) $, 
	\begin{equation}\label{eq: <u, Dfu> simplify}
		\lb u, \nabla_f u \rb_{\L^2}
		= -\frac{1}{2} \lb ({\rm div} f) u, u \rb_{\L^2},
	\end{equation}
	and
	\begin{align*}
		-\lb |u|^2 u, \nabla_f u \rb_{\L^2}
		&= \lb \nabla_f (|u|^2 u) + ({\rm div} f) |u|^2 u, u \rb_{\L^2} \\
		&= 3\lb |u|^2 \nabla_f u, u \rb_{\L^2} 
		+ \lb ({\rm div} f) |u|^2 u, u \rb_{\L^2} \\
		&= \frac{1}{4} \lb ({\rm div} f) |u|^2 u, u \rb_{\L^2},
	\end{align*}
	which imply 
	\begin{equation}\label{eq: <(1-|u|^2)u,Dfu>}
		\begin{aligned}
			\lb (1-|u|^2)u, \nabla_f u \rb_{\L^2} 
			&= \frac{1}{4} \int_{\R^d} ({\rm div}f) \la |u|^4 -2|u|^2 \ra \dx \\
			&= \frac{1}{4} \int_{\R^d} ({\rm div}f) (1-|u|^2)^2 \dx - \frac{1}{4} \int_{\R^d} {\rm div} f \dx,
		\end{aligned}
	\end{equation}
	where $ \int_{\R^d} {\rm div} f \dx = 0 $ if $ f $ vanishes at infinity. 
	%	Moreover, let $ r(g_k) := {\rm div}(g_k {\rm div}(g_k \rho)) $ for $ k \geq 1 $. Then 
	%	\begin{align*}
		%		\int_{\R^d} r(g_k) \dx 
		%		&= \int_{\R^d} \la \nabla_{g_k} {\rm div}(g_k \rho) + ({\rm div} g_k) {\rm div} (g_k \rho) \ra \dx \\
		%		&= \int_{\R^d} \lb g_k, \nabla {\rm div}(g_k \rho) \rb \dx + \int_{\R^d} ({\rm div} g_k) {\rm div} (g_k \rho) \dx \\
		%		&= -\int_{\R^d} ({\rm div} g_k) {\rm div} (g_k \rho) \dx + \int_{\R^d} ({\rm div} g_k) {\rm div} (g_k \rho) \dx \\
		%		&= 0. 
		%	\end{align*} 

	Without loss of generality, assume that $ j $ is supported on the (open) unit ball. 
	Let $ B_\epsilon(x) $ denote the open ball of radius $ \epsilon>0 $ centred at $ x \in \R^d $. 
	Then for weighted $ \L^2 $-norm, 
	\begin{align*}
		|J_\epsilon u|_{\L^2_\rho}^2
		&= \int_{\R^d} \left| \int_{B_\epsilon(x)} j_\epsilon(x-y) u(y) \dy \right|^2 \rho(x) \dx \\
		&\leq \int_{\R^d} \la \int_{B_\epsilon(x)} j_\epsilon(x-y) \rho^{-1}(y) \dy \ra \la \int_{\R^d} j_\epsilon(x-y) |u(y)|^2 \rho(y) \dy \ra \rho(x) \dx,
	\end{align*}
	where $ \rho^{-1}(y) = (1+|y|^2)^2 $ is bounded from above by $ (1+(|x|+\epsilon)^2)^2 $ in $ B_\epsilon(x) $, implying
	\begin{align*}
		\int_{B_\epsilon(x)} j_\epsilon(x-y) \rho^{-1}(y) \rho(x) \dy 
		&\leq (1+(|x|+\epsilon)^2)^2 \rho(x) \int_{B_\epsilon(x)} j_\epsilon(x-y) \dy \\
		&= (1+(|x|+\epsilon)^2)^2 \rho(x) \\
		&\leq c, \quad \forall x \in \R^d. 
	\end{align*}
	Thus, 
	\begin{equation}\label{eq: |Ju|_L2rho <= c|u|_L2rho}
		\begin{aligned}
			|J_\epsilon u|_{\L^2_\rho}^2
			&\leq c \int_{\R^d} \int_{\R^d} j_\epsilon(x-y) |u(y)|^2 \rho(y) \dy \dx \\
			&= c \int_{\R^d} \la \int_{\R^d} j_\epsilon(x-y) \dx \ra |u(y)|^2 \rho(y) \dy \\
			&= c |u|_{\L^2_\rho}^2. 
		\end{aligned}
	\end{equation}
	Similarly,
	\begin{equation}\label{eq: <Ju,w>_L2rho} 
		\begin{aligned}
			\lb J_\epsilon u, w \rb_{\L^2_\rho}
			&= \lb u, J_\epsilon(w\rho) \rb_{\L^2} \\
			&= \lb u, J_\epsilon w \rb_{\L^2_\rho} 
			+ \lb u, J_\epsilon(w\rho) - w \rho \rb_{\L^2} 
			+ \lb u, w - J_\epsilon w \rb_{\L^2_\rho} \\
			&\leq 
			\lb u, J_\epsilon w \rb_{\L^2_\rho} 
			+ |u|_{\L^2} |J_\epsilon(w\rho)-w \rho|_{\L^2} + |u|_{\L^2_\rho} |J_\epsilon w - w |_{\L^2},
		\end{aligned}
	\end{equation}
	where the last two terms converge to $ 0 $ as $ \epsilon \to 0 $ for $ u,w \in \L^2 $.

\subsection{Estimates for mixed partial derivatives}\label{Section: mixed partial derivatives}	
	\begin{lemma}\label{Lemma: D4 vs D1 estimates}
		Let $ u:\R^d \to \R^3 $ be a sufficiently smooth function with suitable decay properties. 
		Then
		\begin{equation}\label{eq: Dij vs Delta}	
			\sum_{i=1}^d |\partial_i^2 u|_{\L^2}^2 
			\leq |\Delta u|_{\L^2}^2, \quad 
			\sum_{i,j=1}^d |\partial_{ij} u|_{\L^2}^2 
			\leq |\Delta u|^2_{\L^2}.
		\end{equation}
		Let $ f \in W^{2,\infty}(\R^d;\R) $. Then for $ i,j \in \{1,\ldots,d\} $, 
		\begin{align}
			\lb \partial_i^4 u, f \partial_j u \rb_{\L^2} 
			&\lesssim |\partial_j u|^2_{\L^2} + |\partial_i^2 u|^2_{\L^2} + |\partial_{ij}u|^2_{\L^2}, \label{eq: Di4-Dj}\\
			\lb \partial_i^2 \partial_j^2 u, f \partial_i u \rb_{\L^2} 
			&\lesssim |\partial_i u|^2_{\L^2} + |\partial_i^2 u|^2_{\L^2} + |\partial_j^2 u|^2_{\L^2} + |\partial_{ij}u|^2_{\L^2}, \label{eq: Di2Dj2-Di} \\
			\lb \partial_i^3 \partial_j u, f \partial_i u \rb_{\L^2} 
			&\lesssim |\partial_i u|^2_{\L^2} + |\partial_i^2 u|^2_{\L^2} + |\partial_{ij}u|^2_{\L^2}, \label{eq: Di3Dj-Di} \\
			\lb \partial_i^3 \partial_j u, f \partial_j u \rb_{\L^2} 
			&\lesssim |\partial_j u|^2_{\L^2} + |\partial_i^2 u|^2_{\L^2} + |\partial_j^2 u|^2_{\L^2} + |\partial_{ij}u|^2_{\L^2}. \label{eq: Di3Dj-Dj}
		\end{align} 
	\end{lemma}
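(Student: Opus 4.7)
The first pair of inequalities in \eqref{eq: Dij vs Delta} follows from a direct integration by parts: for each $i,j \in \{1, \ldots, d\}$ one has $\lb \partial_i^2 u, \partial_j^2 u \rb_{\L^2} = \lb \partial_{ij} u, \partial_{ij} u \rb_{\L^2}$, so summing yields $|\Delta u|_{\L^2}^2 = \sum_{i,j} |\partial_{ij} u|_{\L^2}^2$, and the diagonal part $\sum_i |\partial_i^2 u|_{\L^2}^2$ is bounded by this full sum. For the remaining four inequalities \eqref{eq: Di4-Dj}--\eqref{eq: Di3Dj-Dj}, the common strategy is to integrate by parts twice so that the highest-order derivatives on $u$ drop from four to two, expand the resulting second derivative of the product via Leibniz, and control the unavoidable leftover third-order cross term by the antisymmetrization identity
\begin{equation*}
\lb v, f \partial_k v \rb_{\L^2} = -\tfrac{1}{2} \lb v, (\partial_k f) v \rb_{\L^2},
\end{equation*}
valid for any $v$ with suitable decay, $f \in W^{1,\infty}(\R^d;\R)$ and $k \in \{1,\ldots, d\}$. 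The point of this identity is that a first-order derivative on $v$ is traded for a derivative on $f$ without generating any higher-order $v$-term, which is exactly what is needed to close each estimate at the level of $|\partial_i u|_{\L^2}$, $|\partial_i^2 u|_{\L^2}$ and $|\partial_{ij} u|_{\L^2}$, with the constant depending only on $|f|_{W^{2,\infty}(\R^d;\R)}$.

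Concretely, for \eqref{eq: Di4-Dj} I would integrate by parts twice in $\partial_i$ to reach $\lb \partial_i^2 u, \partial_i^2(f \partial_j u) \rb_{\L^2}$ and apply the identity with $v=\partial_i^2 u$, $k=j$ to the worst summand $\lb \partial_i^2 u, f \partial_j \partial_i^2 u \rb_{\L^2}$. For \eqref{eq: Di3Dj-Dj} a completely analogous double IBP in $\partial_i$ leads to $\lb \partial_{ij} u, \partial_i^2(f \partial_j u) \rb_{\L^2}$, where the leftover piece $\lb \partial_{ij} u, f \partial_i \partial_{ij} u \rb_{\L^2}$ is handled by the identity with $v = \partial_{ij} u$, $k = i$. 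For \eqref{eq: Di3Dj-Di} the same double IBP in $\partial_i$ reduces matters to $\lb \partial_{ij} u, f \partial_i^3 u \rb_{\L^2}$, which requires one further IBP in $\partial_i$ followed by a symmetrization step in $\partial_j$ via the identity (with $v = \partial_i^2 u$, $k=j$) to reach the claimed bound. Finally, for \eqref{eq: Di2Dj2-Di} I would integrate by parts twice in $\partial_j$; the residual term $\lb \partial_i^2 u, f \partial_i \partial_j^2 u \rb_{\L^2}$ is converted by one further IBP in $\partial_j$ into an expression purely in $\partial_{ij} u$, at which point the identity with $v = \partial_{ij} u$, $k = i$ closes the estimate.

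The whole argument is a routine but careful bookkeeping exercise with no analytic input beyond integration by parts. The \emph{main obstacle} is simply choosing, in each of the four cases, the direction of the double IBP so that at every subsequent step the order of differentiation on $u$ is reduced consistently, and then verifying that each derivative thrown onto $f$ can be absorbed by the $W^{2,\infty}$ norm; no term generated in these reductions exceeds the right-hand side of the corresponding estimate.
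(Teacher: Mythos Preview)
Your proposal is correct and follows essentially the same strategy as the paper: repeated integration by parts to lower the order on $u$, together with the antisymmetrization identity $\lb v, f\partial_k v\rb_{\L^2}=-\tfrac{1}{2}\lb v,(\partial_k f)v\rb_{\L^2}$, which is exactly the paper's key identity \eqref{eq: Di2-Di2Dj} in its general form. The only tactical difference is that in \eqref{eq: Di2Dj2-Di}, \eqref{eq: Di3Dj-Di} and \eqref{eq: Di3Dj-Dj} the paper arranges the integrations by parts so that the original left-hand side reappears with the opposite sign (a recursion giving the factor $\tfrac{1}{2}$), whereas you reach the same endpoint by one extra IBP followed by a direct application of the identity; both routes are equivalent and your version is, if anything, slightly more streamlined.
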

	\begin{proof}
		For the inequalities in \eqref{eq: Dij vs Delta}, we first observe that
		\begin{align*}
			|\partial_{ij} u|_{\L^2}^2 
			%= \lb \partial_i \partial_j u, \partial_i \partial_j u \rb_{\L^2} 
			= \lb \partial_i^2 u, \partial_j^2 u \rb_{\L^2}.
		\end{align*}
		Then we have
		\begin{align*}
			\sum_{i=1}^d |\partial_i^2 u|_{\L^2}^2 
			= |\Delta u|^2_{\L^2} - 2 \sum_{i,j=1, i\neq j}^d \lb \partial_i^2 u, \partial_j^2 u \rb_{\L^2} 
			= |\Delta u|^2_{\L^2} - 2 \sum_{i,j=1, i \neq j}^d |\partial_{ij}u|_{\L^2}^2 
			\leq |\Delta u|^2_{\L^2},
		\end{align*}
		and as a result,
		\begin{align*}
			\sum_{i,j=1}^d |\partial_{ij} u|_{\L^2}^2  
			\leq \frac{1}{2} \la \sum_{i=1}^d |\partial_i^2 u|_{\L^2}^2 + \sum_{j=1}^d |\partial_j^2 u|_{\L^2}^2 \ra
			\leq |\Delta u|_{\L^2}^2.
		\end{align*}
		
		In order to deduce the inequalities \eqref{eq: Di4-Dj}-\eqref{eq: Di3Dj-Dj}, 
		we first observe that
		\begin{equation}\label{eq: Di2-Di2Dj}
			\lb \partial_i^2 u, f \partial_i^2 \partial_j u \rb_{\L^2} 
			= - \lb \partial_i^2 u, \partial_j f \ \partial_i^2 u + f \partial_i^2 \partial_j u \rb_{\L^2} 
			= - \frac{1}{2} \lb \partial_i^2 u, \partial_j f \ \partial_i^2 u \rb_{\L^2}.
		\end{equation}
		Then for \eqref{eq: Di4-Dj}, 
		\begin{align*}
			\lb \partial_i^4 u, f \partial_j u \rb_{\L^2} 
			&= \lb \partial_i^2 u , \partial_i^2 f \ \partial_j u + f \partial_i^2 \partial_j u + 2 \partial_i f \ \partial_{ij} u \rb_{\L^2} \\
			%&= \lb \partial_i^2 u, \partial_i^2 f \ \partial_j u + 2 \partial_i f \ \partial_{ij} u \rb_{\L^2} + \lb \partial_i^2 u, f \partial_i^2 \partial_j u \rb_{\L^2} \\
			&= \lb \partial_i^2 u, \partial_i^2 f \ \partial_j u + 2 \partial_i f \ \partial_{ij} u \rb_{\L^2} - \frac{1}{2} \lb \partial_i^2 u, \partial_j f \ \partial_i^2 u \rb_{\L^2} \\
			&\lesssim 
			|\partial_j u|^2_{\L^2} + |\partial_i^2 u|^2_{\L^2} + |\partial_{ij}u|^2_{\L^2},
		\end{align*} 
		where the second equality holds by \eqref{eq: Di2-Di2Dj}. 
		Similarly, for \eqref{eq: Di2Dj2-Di},
		\begin{align*}
			\lb \partial_i^2 \partial_j^2 u, f \partial_i u \rb_{\L^2} 
			&= \lb \partial_j^2 u, \partial_i^2 f \ \partial_i u + f \partial_i^3 u + 2 \partial_i f \ \partial_i^2 u \rb_{\L^2}  \\
			%&= \lb \partial_j^2 u, \partial_i^2 f \ \partial_i u + 2 \partial_i f \ \partial_i^2 u \rb_{\L^2} + \lb \partial_j^2 u, f \partial_i^3 u \rb_{\L^2} \\
			&= \lb \partial_j^2 u, \partial_i^2 f \ \partial_i u + 2 \partial_i f \ \partial_i^2 u \rb_{\L^2} - \lb \partial_i f \ \partial_j^2 u, \partial_i^2 u \rb_{\L^2} - \lb f \ \partial_i \partial_j^2 u, \partial_i^2 u \rb_{\L^2}  \\
			&= \lb \partial_j^2 u, \partial_i^2 f \ \partial_i u + \partial_i f \ \partial_i^2 u \rb_{\L^2} - \lb \partial_i u, \partial_j^2 \la f \partial_i^2 u \ra \rb_{\L^2} \\
			&= \lb \partial_j^2 u, \partial_i^2 f \ \partial_i u + \partial_i f \ \partial_i^2 u \rb_{\L^2} - \lb \partial_i u, \partial_j^2 f \ \partial_i^2 u \rb_{\L^2} - 2 \lb \partial_i u, \partial_j f \ \partial_j \partial_i^2 u \rb_{\L^2} \\
			&\quad - \lb f \partial_i u, \partial_i^2 \partial_j^2 u \rb_{\L^2} \\
			&= \frac{1}{2} \lb \partial_j^2 u, \partial_i^2 f \ \partial_i u + \partial_i f \ \partial_i^2 u \rb_{\L^2} - \frac{1}{2} \lb \partial_i u, \partial_j^2 f \ \partial_i^2 u \rb_{\L^2} 
			- \lb \partial_i u, \partial_j f \ \partial_j \partial_i^2 u \rb_{\L^2} \\
			&= \frac{1}{2} \lb \partial_i u, \partial_i^2 f \ \partial_j^2 u - \partial_j^2 f \ \partial_i^2 u \rb_{\L^2} 
			+ \frac{1}{2} \lb \partial_j^2 u, \partial_i f \ \partial_i^2 u \rb_{\L^2} 	
			+ \lb \partial_j^2 f \ \partial_i u + \partial_j f \ \partial_{ij}u, \partial_i^2 u \rb_{\L^2}\\
			&\lesssim |\partial_i u|^2_{\L^2} + |\partial_i^2 u|^2_{\L^2} + |\partial_j^2 u|^2_{\L^2} + |\partial_{ij}u|^2_{\L^2}, 
		\end{align*}
		where the fifth equality holds since the last term in the fourth equality is the negative counterpart of the left-hand side. 
		For \eqref{eq: Di3Dj-Di},  
		\begin{align*}
			\lb \partial_i^3 \partial_j u, f \partial_i u \rb_{\L^2} 
			&= \lb \partial_{ij}u, \partial_i^2 (f \partial_i u) \rb_{\L^2} \\
			&= \lb \partial_{ij}u, \partial_i^2 f \ \partial_i u + f \ \partial_i^3 u + 2 \partial_i f \ \partial_i^2 u \rb_{\L^2} \\
			&= \lb \partial_{ij}u, \partial_i^2 f \ \partial_i u + 2 \partial_i f \ \partial_i^2 u \rb_{\L^2} 
			- \lb \partial_i u, \partial_j f \ \partial_i^3 u \rb_{\L^2} - \lb \partial_i u, f \partial_i^3 \partial_j u \rb_{\L^2} \\
			&= \frac{1}{2} \lb \partial_{ij}u, \partial_i^2 f \ \partial_i u + 2 \partial_i f \ \partial_i^2 u \rb_{\L^2} 
			- \frac{1}{2} \lb \partial_i u, \partial_j f \ \partial_i^3 u \rb_{\L^2} \\
			&= \frac{1}{2} \lb \partial_{ij}u, \partial_i^2 f \ \partial_i u \rb_{\L^2} 
			+ \lb \partial_{ij}u, \partial_i f \ \partial_i^2 u \rb_{\L^2} 
			+ \frac{1}{2} \lb \partial_j f \ \partial_i^2 u + \partial_{ij}f \ \partial_i u, \partial_i^2 u \rb_{\L^2} \\
			&\lesssim |\partial_iu|^2_{\L^2} + |\partial_i^2 u|^2_{\L^2} + |\partial_{ij}u|^2_{\L^2}.
		\end{align*}
		For \eqref{eq: Di3Dj-Dj}, the derivation is similar to that of \eqref{eq: Di3Dj-Di} and we obtain
		\begin{align*}
			\lb \partial_i^3 \partial_j u, f \partial_j u \rb_{\L^2}
			%&= \lb \partial_{ij}u, \partial_i^2 f \ \partial_j u + f \ \partial_i^2 \partial_j u + 2 \partial_i f \ \partial_{ij}u \rb_{\L^2} \\
			%&= \lb \partial_{ij}u, \partial_i^2 f \ \partial_j u + 2 \partial_i f \ \partial_{ij}u \rb_{\L^2} 
			%- \lb \partial_j u, \partial_i f \ \partial_i^2 \partial_j u + f \ \partial_i^3 \partial_j u \rb_{\L^2} \\
			%&= \frac{1}{2} \lb \partial_{ij}u, \partial_i^2 f \ \partial_j u + 2 \partial_i f \ \partial_{ij}u \rb_{\L^2} 
			%- \frac{1}{2} \lb \partial_j u, \partial_i f \ \partial_i^2 \partial_j u \rb_{\L^2} \\
			&= \frac{1}{2} \lb \partial_{ij}u, \partial_i^2 f \ \partial_j u \rb_{\L^2} 
			+ \lb \partial_{ij}u, \partial_i f \ \partial_{ij}u \rb_{\L^2} 
			+ \frac{1}{2} \lb \partial_i f \ \partial_j^2 u + \partial_{ij}f \ \partial_j u, \partial_i^2u \rb_{\L^2} \\
			&\lesssim |\partial_j u|^2_{\L^2} + |\partial_i^2 u|^2_{\L^2} + |\partial_j^2 u|^2 + |\partial_{ij}u|^2_{\L^2},
		\end{align*} 
		concluding the proof.
	\end{proof}
	
	This lemma allows us to treat a higher-order term $ \langle \Delta^2 (\nabla_f u), \nabla_f u \rangle $. 
	Note that %for sufficiently smooth functions $ u $ and $ f $,
	\begin{align*}
		\Delta^2 (\nabla_f u) 
		&= \nabla_f (\Delta^2 u) 
		+ 4 \sum_{i,j=1}^d \partial_i f_j \partial_{ij} (\Delta u) 
		+ 2\la \nabla_{\Delta f} (\Delta u) + 2 \sum_{i,j,k=1}^d \partial_{ij} f_k \partial_{ijk}u \ra \\
		&\quad + \la 4 \sum_{i,j=1}^d \partial_i(\Delta f_j) \partial_{ij}u + \nabla_{\Delta^2 f}u \ra \\
		&= \nabla_f (\Delta^2 u) + 4{\rm T}_{1a}(u) + 2{\rm T}_{1b}(u) + {\rm T}_{1c}(u).
	\end{align*}
	For $ f \in H^4(\R^d;\R^d) $, using Lemma \ref{Lemma: D4 vs D1 estimates} we have
	\begin{equation}\label{eq: T1a, T1b, T1c, <D4u,D1fu>}
		\begin{aligned}
			\lb {\rm T}_{1a}(u), \nabla_f u \rb_{\L^2}	
			&= \sum_{i,j} \lb \partial_i f_j \partial_{ij} (\Delta u), \nabla_f u \rb_{\L^2} \\
			%&= \lb \partial_i f_i \partial_i^2 \Delta u + \partial_j f_j \partial_j^2 \Delta u, \nabla_f u \rb_{\L^2} \\
			%&\quad + \lb \partial_i f_j \partial_{ij} \Delta u + \partial_j f_i \partial_{ij} \Delta u, f_i \partial_i u + f_j \partial_j u \rb_{\L^2} \\
			%&= \lb \partial_i f_i (\partial_i^4 u + \partial_i^2 \partial_j^2u) + \partial_j f_j (\partial_j^4u + \partial_i^2 \partial_j^2u), f_i \partial_i u + f_j \partial_j u \rb_{\L^2} \\
			%&\quad + \lb \la \partial_i f_j + \partial_j f_i\ra \la \partial_i^3 \partial_j u + \partial_i \partial_j^3 u \ra, f_i \partial_i u + f_j \partial_j u \rb_{\L^2} \\
			&\lesssim |f|^2_{W^{1,\infty}(\R^d;\R^d)} |\nabla u|^2_{\H^1}, \\
			\lb {\rm T}_{1b}(u), \nabla_f u \rb_{\L^2}
			&= \lb \nabla_{\Delta f}(\Delta u), \nabla_f u \rb_{\L^2} + 2 \sum_{i,j,k} \lb \partial_{ij}f_k \ \partial_{ijk}u, \nabla_f u \rb_{\L^2} \\
			%&= -\sum_{i,j=1}^2 \lb f_j \Delta f_i \ \partial_{ij}(\Delta u) + \partial_j(f_j \Delta f_i) \partial_i(\Delta u), u \rb_{\L^2} \\
			%&\quad + 2 \sum_{i,j,k=1}^2 \lb \partial_{ijk}u, f_i \partial_{ij}f_k \ \partial_i u + f_j \partial_{ij}f_k \ \partial_j u \rb_{\L^2} \\
			%&= - \sum_{i,j=1}^2 \lb \Delta u, \partial_{ij}\la f_j \Delta f_i \ u \ra \rb_{\L^2} + \sum_{i,j=1}^2 \lb \Delta u, \partial_i (\partial_j(f_j \Delta f_i) \ u) \rb_{\L^2} \\
			%&\quad - 2 \sum_{i,j,k=1}^2 \lb \partial_{ij}u, \partial_k \la f_i \partial_{ij}f_k \ \partial_i u + f_j \partial_{ij}f_k \ \partial_j u \ra \rb_{\L^2} \\
			&= -\lb \Delta u, \nabla_{\Delta f} \nabla_f u + ({\rm div} \Delta f) \nabla_f u \rb_{\L^2} 
			- 2 \sum_{i,j,k} \lb \partial_{ij}u, \partial_k \la \partial_{ij}f_k \nabla_f u \ra \rb_{\L^2} \\
			&\lesssim |f|^2_{W^{2,\infty} \cap W^{3,4}(\R^d;\R^d)} |\nabla u|^2_{\H^1}, \\
			\lb {\rm T}_{1c}(u), \nabla_f u \rb_{\L^2} 
			&= 4\sum_{i,j} \lb \partial_i(\Delta f_j) \partial_{ij}u, \nabla_f u \rb_{\L^2} + \lb \nabla_{\Delta^2f} u, \nabla_f u \rb_{\L^2} \\
			&\lesssim |f|^2_{W^{2,\infty} \cap W^{3,4} \cap H^4(\R^d;\R^d)} |\nabla u|^2_{\H^1}, \\
			\lb \Delta^2 u, \nabla_f u \rb_{\L^2} 
			&\lesssim |f|^2_{W^{2,\infty}(\R^d;\R^d)} |\nabla u|^2_{\H^1},
		\end{aligned}
	\end{equation}
	where $ |f|_{W^{2,\infty}(\R^d;\R^d)}+|f|_{W^{3,4}(\R^d;\R^d)} \lesssim |f|_{H^4(\R^d;\R^d)} $ for $ d \leq 3 $. 
	The estimates \eqref{eq: T1a, T1b, T1c, <D4u,D1fu>} help us to address It{\^o} corrections in Step 2(ii) of the proof of Lemma \ref{Lemma: meps H2-est}.

\section{On the torus}\label{Section: torus}
	For equation \eqref{eq: sLLG with Delta^2} formulated on the torus $ \T^d \subset \R^d $, Theorem \ref{Theorem: E!} still holds with $ \T^d $ in place of $ \R^d $.   
	%	\begin{theorem}\label{Theorem: E! on torus}
		%		There exists a pathwise unique martingale solution $ (\Omega, \mathcal{F}, (\mathcal{F}_t)_{t \in [0,T]}, \P, W, \m) $ to \eqref{eq: sLLG with Delta^2} in the sense of Definition \ref{Def: martingale solution} with $ \T^d $ in place of $ \R^d $, such that for $ p \in [1,\infty) $, 
		%		\begin{align*}
			%			\E \left[ \sup_{t \in [0,T]} |\nabla \m(t)|_{\H^1}^p + \la \int_0^T |\m(t) \times \Delta^2 \m(t)|_{\L^2}^2 \dt \ra^p \right] < \infty, 
			%		\end{align*}
		%		and $ \m-e_3 \in \mathcal{C}^\alpha([0,T];\L^2) $ $ \P $-a.s. for $ \alpha \in (0,\frac{1}{2}) $. 
		%	\end{theorem} 
	Since the domain is bounded, the proof can be simplified in this case. 
	In this section, let $ \L^p $ and $ \H^\sigma $ denote the usual Lebesgue and Hilbert spaces $ L^p(\T^d;\R^3) $ and $ W^{\sigma,2}(\T^d;\R^3) $, respectively. 
	We can work with $ \m $ directly and only need Faedo-Galerkin approximations $ \{ \m_n \} $ with a cut-off function $ \psi_R $ that depends on $ |\m(t)|_{\L^\infty} $ instead of $ |\m(t,x)|^2 $. More explicitly, 
	let $ \{{\rm \bf e}_i\} $ be an orthonormal basis of $ \L^2 $ consisting of eigenvectors of the negative Laplacian $ -\Delta $ in $ \T^d $ and let $ \H_n := \text{span} \{ {\rm \bf e}_i, \ldots, {\rm \bf e}_n \} $. 
	Let $ \Pi_n $ be the orthogonal projection from $ \L^2 $ onto $ \H_n $, and we formulate the approximating equation:
	\begin{equation}\label{eq: dm-n-R}
		\d \m_n(t)
		= F^R_n(\m_n(t)) \dt + \frac{1}{2} \sum_{k=1}^\infty S_{k,n}(\m_n(t)) \dt + \sum_{k=1}^\infty G_{k,n}(\m_n(t)) \dW_k(t), 
		\quad
		m_n(0) =  \Pi_n m_0, 
	\end{equation}
	where 
	\begin{align*}
		F^R_n : \H_n \ni u &\mapsto \psi_R(|u|_{\L^\infty}) \Pi_n \bar{F}(u) - \Pi_n \nabla_v u \in \H_n, \\ %\la u \times (\Delta u + \Delta^2 u - he_3) + \alpha u \times (u \times (\Delta u + \Delta^2 u - he_3)) \ra -\nabla_v u \in \H_n, \\
		S_{k,n}: \H_n \ni u &\mapsto \Pi_n S_k(u) \in \H_n, \\ %(\nabla_{g_k})^2 u \in \H_n, \\
		G_{k,n} : \H_n \ni u &\mapsto \Pi_n G_k(u) \in \H_n,  %-\nabla_{g_k} u \in \H_n.
	\end{align*}
	are locally Lipschitz. 
	The uniform estimates can be obtained as in Lemma \ref{Lemma: meps H2-est}, and tightness and compact embeddings hold for the usual Lebesgue and Sobolev spaces without weight or localisation. Then, for the new processes $ \tilde{M}_n $ and $ \tilde{M} $ (deduced from Skorohod theorem) with strong convergence in $ L^{2p}(\tilde{\Omega}; L^2(0,T;\L^2)) $, convergence of the cut-off function $ \tilde{\psi}_n^R = \psi_R(|\tilde{\m}_n|_{\L^\infty}) \to \tilde{\psi}^R = \psi_R(|\tilde{\m}|_{\L^\infty}) $ in $ L^{2p}(\tilde{\Omega}; L^2(0,T)) $ is immediate from Gagliardo-Nirenberg inequality: 
	\begin{align*}
		\left| \tilde{\psi}_n^R(t) - \tilde{\psi}^R(t) \right| 
		&\leq \sup_{y \in \R} |\psi_R'(y)| \ \left| | \tilde{M}_n(t)|_{\L^\infty}-|\tilde{M}(t)|_{\L^\infty} \right| \\
		&\lesssim |\tilde{M}_n(t)- \tilde{M}(t)|_{\L^\infty} \\
		&\lesssim |\tilde{M}_n(t)- \tilde{M}(t)|_{\L^2}^{1-\frac{d}{4}} |\tilde{M}_n(t)- \tilde{M}(t)|_{\H^2}^\frac{d}{4}.
	\end{align*}
	Thus, we obtain similar convergences to Lemma \ref{Lemma: conv Feps,Seps,Geps} in $ \L^2 = L^2(\T^d;\R^3) $ without knowing $ \tilde{\psi}^R(t) = 1 $, and then we can deduce the existence of weak martingale solution to 
	\begin{equation}\label{eq: sLLG-psi}
		\tilde{\m}(t)
		= \tilde{\m}_0 + \int_0^t \la \tilde{\psi}^R \bar{F}(\tilde{\m}) - \nabla_v \tilde{\m} \ra (s) \ds + \frac{1}{2} \sum_k \int_0^t S_k(\tilde{\m})(s) \ds + \sum_k \int_0^t G_k(\tilde{\m})(s) \ \d \tilde{W}_k(s).
	\end{equation}
	Finally, $ |\tilde{\m}(t,x)| =1 $ (and thus $ \tilde{\psi}^R(t) = 1 $ for $ R>1 $) for a.e.-$ (t,x) $, $ \tilde{\P} $-a.s. can be shown using a similar but simpler calculation to Lemma \ref{Lemma: |meps+e3| <= remainder}, and the pathwise uniqueness follows as in Section \ref{Section: pathwise unique}.  


\begin{thebibliography}{99}	
		\bibitem{Abanov}
		Abanov A. and Pokrovsky V.L. 
		Skyrmion in a real magnetic film. 
		Physical Review B 58.14 (1998): R8889.
		
		\bibitem{BrzezniakGoldysJegaraj_2013}
		Brze{\'z}niak Z., Goldys B. and Jegaraj T. 
		Weak solutions of a stochastic Landau-Lifshitz-Gilbert equation. 
		Applied Mathematics Research eXpress 2013.1 (2013): 1-33.
		
		\bibitem{BrzezniakGoldysJegaraj_2017}
		Brze{\'z}niak Z., Goldys B. and Jegaraj T. 
		Large deviations and transitions between equilibria for stochastic Landau-Lifshitz-Gilbert equation. 
		Archive for Rational Mechanics and Analysis 226 (2017): 497-558.
		
		\bibitem{BrzezniakFlandoliMaurelli_2016}
		Brze{\'z}niak Z., Flandoli F. and Maurelli M. 
		Existence and uniqueness for stochastic 2D Euler flows with bounded vorticity. 
		Archive for Rational Mechanics and Analysis 221 (2016): 107-142.
		
		\bibitem{ChugreevaMelcher}
		Chugreeva O. and Melcher C. 
		Strong solvability of regularized stochastic Landau-Lifshitz-Gilbert equation. 
		IMA Journal of Applied Mathematics 83.2 (2018): 261-282.
		
		\bibitem{Red_book}
		Da Prato G. and Zabczyk J.
		Stochastic Equations in Infinite Dimensions. 
		Cambridge University Press, 2014. 
		
		\bibitem{DoresicMelcher}
		Dore{\v s}i{\'c} T. and Melcher C. 
		Global weak solutions for the Landau Lifshitz Gilbert Vlasov Maxwell system coupled via emergent electromagnetic fields. 
		Journal of Evolution Equations 22 (2022): 1-32.
		
		\bibitem{DoringMelcher}
		D{\"o}ring L. and Melcher C. 
		Compactness results for static and dynamic chiral skyrmions near the conformal limit. 
		Calculus of Variations and Partial Differential Equations 56 (2017): 1-30.
		
		\bibitem{FlandoliGaleatiLuo}
		Flandoli F., Galeati L. and Luo D. 
		Delayed blow-up by transport noise. 
		Communications in Partial Differential Equations 46(9) (2021): 1757-1788.
		
		\bibitem{FlandoliGatarek}
		Flandoli F. and Gatarek D.
		Martingale and stationary solutions for stochastic Navier-Stokes equations. 
		Probab. Theory Relat. Fields. 102 (1995): 367--391.	
		
		\bibitem{Galeati}
		Galeati L. 
		On the convergence of stochastic transport equations to a deterministic parabolic one. 
		Stochastics and Partial Differential Equations: Analysis and Computations. 8(4) (2020): 833-868.
		
		\bibitem{GuoHong}
		Guo B. and Hong M.C. 
		The Landau-Lifshitz equation of the ferromagnetic spin chain and harmonic maps. 
		Calculus of Variations and Partial Differential Equations 1 (1993): 311-334.
		
		\bibitem{GoldysLeTran}
		Goldys B., Le K-N. and Tran T. 
		A finite element approximation for the stochastic Landau-Lifshitz-Gilbert equation. 
		Journal of Differential Equations 260(2) (2016): 937-970.
		
		\bibitem{Harpes}
		Harpes P. 
		Uniqueness and bubbling of the 2-dimensional Landau-Lifshitz flow. 
		Calculus of Variations and Partial Differential Equations 20(2) (2004): 213-29. 
		
		\bibitem{Hocquet_sHMF}
		Hocquet A. 
		Struwe-like solutions for the stochastic harmonic map flow. 
		Journal of Evolution Equations 18 (2018): 1189-1228.
		
		\bibitem{KomineasMelcherVenakides_small}
		Komineas S., Melcher C. and Venakides S. 
		The profile of chiral skyrmions of small radius. 
		Nonlinearity 33(7) (2020): 3395.
		
		\bibitem{KurzkeMelcherMoser2006}
		Kurzke M., Melcher C. and Moser R. 
		Domain walls and vortices in thin ferromagnetic films. 
		In Analysis, modeling and simulation of multiscale problems (pp. 249-298). Springer Berlin Heidelberg, 2006.
		
		\bibitem{KurzkeMelcherMoser2011}
		Kurzke M., Melcher C. and Moser R. 
		Vortex motion for the Landau-Lifshitz-Gilbert equation with spin-transfer torque. 
		SIAM Journal on Mathematical Analysis 43(3) (2011): 1099-1121.
		
		\bibitem{Melcher_LLG_highdim}
		Melcher C. 
		Global solvability of the Cauchy problem for the Landau-Lifshitz-Gilbert equation in higher dimensions. 
		Indiana University Mathematics Journal 61 (2012): 1175--1200.
		
		\bibitem{MikuleviciusRozovskii}
		Mikulevicius R. and Rozovskii B.L. 
		Stochastic Navier--Stokes equations for turbulent flows. 
		SIAM Journal on Mathematical Analysis 35(5) (2004): 1250-1310.
		
		\bibitem{Sampaio_etal}
		Sampaio J., Cros V., Rohart S., Thiaville A. and Fert A.
		Nucleation, stability and current induced motion of isolated magnetic skyrmions in nanostructures.
		Nature nanotechnology 8(11) (2013): 839--844.
		
		\bibitem{Struwe1985}
		Struwe M. 
		On the evolution of harmonic mappings of Riemannian surfaces. 
		Comment. Math. Helvetici 60(4) (1985): 558–581.
		
		\bibitem{Sutcliffe}
		Sutcliffe P. 
		Skyrmion knots in frustrated magnets. 
		Physical review letters 118(24) (2017): 247203.
		
		\bibitem{Taylor_book}
		Taylor M.E.
		Partial Differential Equations III. Nonlinear Equations. 2nd Ed. 
		Springer, 2010.
	\end{thebibliography}
\end{document}